\newtheorem{thm}{Theorem}[section]
\newtheorem{cor}[thm]{Corollary}
\newtheorem{lemma}[thm]{Lemma}
\newtheorem{prop}[thm]{Proposition}
\newcommand{\R}{{\mathbb{R}}}
\newcommand{\Z}{{\mathbb{Z}}}
\newcommand{\La}{\triangle}
\newcommand{\bs}{\backslash}
\newcommand{\1}{\partial}
\newcommand{\2}{\overline}
\newcommand{\3}{\varepsilon}
\begin{document}

\title{Existence of Neumann and singular solutions of the fast diffusion equation}

\author[Kin Ming Hui]{Kin Ming Hui}
\address{Kin Ming Hui:
Institute of Mathematics, Academia Sinica,\\
Taipei, 10617, Taiwan, R.O.C.}
\email{kmhui@gate.sinica.edu.tw}

\author[Sunghoon Kim]{Sunghoon Kim}
\address{Sunghoon Kim:
Department of Mathematics, School of Natural Sciences, The Catholic University of Korea,
43 Jibong-ro, Wonmi-gu, Bucheon-si, Gyeonggi-do, 420-743, Republic of Korea}
\email{math.s.kim@catholic.ac.kr}

\keywords{}
\subjclass{}

\begin{abstract}
Let $\Omega$ be a smooth bounded domain in $\R^n$, $n\ge 3$, $0<m\le\frac{n-2}{n}$,
$a_1,a_2,\dots, a_{i_0}\in\Omega$, $\delta_0=\min_{1\le i\le i_0}\mbox{dist }(a_i,\1\Omega)$ and
let $\Omega_{\delta}=\Omega\setminus\cup_{i=1}^{i_0}B_{\delta}(a_i)$ and $\hat{\Omega}=\Omega\setminus\{a_1\,\dots,a_{i_0}\}$.
For any $0<\delta<\delta_0$ we will prove the existence and uniqueness of positive solution of the Neumann problem for the equation $u_t=\Delta u^m$
in $\Omega_{\delta}\times (0,T)$ for some $T>0$. We will prove the existence of singular solutions of this equation in $\hat{\Omega}\times (0,T)$ for some $T>0$ that blow-up at the points $a_1,\dots, a_{i_0}$. 
\end{abstract}

\maketitle
\vskip 0.2truein

\setcounter{equation}{0}
\setcounter{section}{0}

\section{Introduction}
\setcounter{equation}{0}
\setcounter{thm}{0}

Recently there is a lot of research on the equation
\begin{equation}\label{fast-diffusion-eqn}
u_t=\Delta u^m
\end{equation}
by M.~Bonforte, E. Chasseigne, M.~Fila, G. Grillo, J.L.~Vazquez, M.~Winkler, E.~Yanagida \cite{BGV1}, \cite{BGV2}, \cite{BV1}, \cite{BV2}, \cite{BV3}, \cite{CV}, \cite{FVWY},
P.~Daskalopoulos, M.Del~Pino and N.~Sesum \cite{DPS}, \cite{DS1}, \cite{DS2}, S.Y.~Hsu [Hs2-3], K.M.~Hui [Hu2-3],
M.Del~Pino and M.~S\'aez \cite{PS}, L.A.~Peletier and H.~Zhang \cite{PZ}, etc. This equation arises in many physical models.
When $m>1$, it is called the porous medium which models the diffusion of gases through porous media \cite{A}. When
$m=1$, \eqref{fast-diffusion-eqn} is the heat equation. When $0<m<1$, it is usually called the fast diffusion equation.
When $m=\frac{n-2}{n+2}$, \eqref{fast-diffusion-eqn} appears in the study of Yamabe flow on $\R^n$. In fact
the metric $g_{ij}=u^{\frac{4}{n+2}}dx^2$, $u>0$, is a solution of the Yamabe flow  \cite{DS2}, \cite{PS},
\begin{equation*}
\frac{\1 g_{ij}}{\1 t}=-Rg_{ij}
\end{equation*}
in $\R^n$, $n\ge 3$, if and only if $u$ is a solution of
\begin{equation*}
u_t=\frac{n-1}{m}\Delta u^m
\end{equation*}
with $m=\frac{n-2}{n+2}$ where $R$ is the scalar curvature of $g_{ij}$. We refer the readers to the book
\cite{V3} by J.L.~Vazquez for the basics of \eqref{fast-diffusion-eqn} and the books \cite{DK}, \cite{V2},
by P.~Daskalopoulos, C.E.~Kenig and J.L.~Vazquez for the most recent results of \eqref{fast-diffusion-eqn}. We also refer to the paper \cite{BV3}, by M. Bonforte and J. L. Vazquez for the non local version of \eqref{fast-diffusion-eqn}.

As observed by L.~Peletier \cite{P} and J.L~Vazquez \cite{V1} there is a big difference on the behaviour of solutions
of \eqref{fast-diffusion-eqn} for $(n-2)/n<m<1$, $n\ge 3$,
and for $0<m\le (n-2)/n$, $n\ge 3$. For example there is a $L^1-L^{\infty}$ regularizing effect for the solutions
of
\begin{equation}\label{Cauchy-problem}
\left\{\begin{aligned}
u_t=&\Delta u^m, u\ge 0,\quad\mbox{ in }\R^n\times (0,T)\\
u(x,0)=&u_0\qquad\qquad\quad\mbox{ in }\R^n
\end{aligned}\right.
\end{equation}
with $0\le u_0\in L^1_{loc}(\R^n)$ for any $(n-2)/n<m<1$ \cite{HP}, \cite{DaK}. However there is no such $L^1-L^{\infty}$ regularizing
effect for  solutions of \eqref{Cauchy-problem} when $0<m\le (n-2)/n$, $n\ge 3$, \cite{V2}.
When $\frac{n-2}{n}<m<1$, existence and uniqueness of global weak solution of \eqref{Cauchy-problem}
for any $0\le u_0\in L^1_{loc}(\R^n)$ has been proved by M.A.~Herrero and M.~Pierre in \cite{HP}. When $0<m\le (n-2)/n$ and $n\ge 3$,
existence of positive smooth solutions of \eqref{Cauchy-problem} for any $0\le u_0\in L_{loc}^p(\R^n)$, $p>\max(1,(1-m)n/2)$, satisfying the condition,
\begin{equation}\label{initial-value-average-lower-bd-0}
\liminf_{R\to\infty}\frac{1}{R^{n-\frac{2}{1-m}}}\int_{|x|\le R}u_0\,dx
\ge C_1T^{\frac{1}{1-m}}
\end{equation}
for some constant $C_1>0$ is proved by S.Y.~Hsu in \cite{Hs3}. 

In this paper we will study the existence of singular solutions of 
\eqref{fast-diffusion-eqn}. Study of singular solutions of nonlinear elliptic equations were also obtained by H.~Brezis and L.~Veron \cite{BrV}, B.~Gidas and J.~Spruck \cite{GS}, etc. In order to study the singular solutions of \eqref{fast-diffusion-eqn}
we will first prove the existence of positive smooth solution of the Neumann problem for \eqref{fast-diffusion-eqn} in smooth bounded domains with a finite numbers of holes when $0<m\le (n-2)/n$, $n\ge 3$. When $n\ge 3$ and $m=(n-2)/n$, we will prove the existence of  singular solutions of \eqref{fast-diffusion-eqn} in a smooth bounded domain
that blow-up at a finite number of points in the domain. More precisely let $\Omega$ be a smooth bounded domain in $\R^n$, $n\ge 1$, $a_1, a_2, \dots, a_{i_0}\in\Omega$, $\delta_0=\min_{1\le i,j\le i_0}\left(\mbox{dist }(a_i,\1\Omega),|a_i-a_j|\right)/2$,
$\Omega_{\delta}=\Omega\setminus\cup_{i=1}^{i_0}B_{\delta}(a_i)$, $\hat{\Omega}=\Omega\setminus\{a_1,\dots,a_{i_0}\}$ and $\hat{\R}=\R\setminus\{a_1,\dots,a_{i_0}\}$. We will prove the following three main theorems.

\begin{thm}\label{Neumann-holes-problem-existence-thm}
Let $n\ge 3$, $0<m\le\frac{n-2}{n}$, $0<\delta<\delta_0$, $0\le u_0\in L^p(\Omega_{\delta})$ for some constant $p>\frac{n(1-m)}{2}$, $0\le f\in L_{loc}^{\infty}(\1\Omega\times [0,\infty))$ and $0\le g_i\in L_{loc}^{\infty}(\1 B_{\delta}(a_i)\times [0,\infty))$ for all $i=1,\cdots,i_0$. Suppose either $u_0\not\equiv 0$ on $\Omega_{\delta}$ or
\begin{equation*}\label{f-g-L1-positive-condition}
\int_0^t\int_{\partial\Omega}f\,d\sigma ds
+\sum_{i=1}^{i_0}\int_0^t\int_{\partial B_{\delta}(a_i)}g_i\,d\sigma ds>0\quad\forall t>0.
\end{equation*}
Then there exists a unique solution $u$ for the equation
\begin{equation}\label{Neumann-holes-eqn}
\begin{cases}
\begin{aligned}
u_t=&\La u^m\qquad\,\,\,\mbox{ in }\Omega_{\delta}\times(0,\infty)\\
\frac{\partial u^m}{\partial\nu}&=f\qquad\,\,\,\mbox{ on }\partial\Omega\times(0,\infty)\\
\frac{\partial u^m}{\partial\nu}&=g_i\qquad\,\,\mbox{ on $(\bigcup_{i=1}^{i_0}\partial B_{\delta}(a_i))\times(0,\infty)$}
\quad\forall i=1,\dots,i_0\\
u(x,0)&=u_0(x)\quad\,\mbox{in }\Omega_{\delta}
\end{aligned}
\end{cases}
\end{equation}
that satisfies
\begin{equation}\label{mass-formulua-holes}
\int_{\Omega_{\delta}}u(x,t)\,dx=\int_{\Omega_{\delta}}u_0\,dx+\int_0^t\int_{\partial\Omega}f\,d\sigma ds
+\sum_{i=1}^{i_0}\int_0^t\int_{\partial B_{\delta}(a_i)}g_i\,d\sigma ds
\quad \forall t>0
\end{equation}
where $\frac{\partial}{\partial\nu}$ is the derivative on $\partial\Omega_{\delta}$ with respect to the unit outward normal of the domain 
$\Omega_{\delta}$
Moreover if $f\equiv 0$ on $\1\Omega\times (0,\infty)$ and $g_i$, $i=1,\dots, i_0$, are nonnegative monotone decreasing functions of $t>0$, then  
\begin{equation}\label{Aronson-Benilan-ineqn}
u_t\le\frac{u}{(1-m)t}
\end{equation}
in $\Omega_{\delta}\times(0,\infty)$.
\end{thm}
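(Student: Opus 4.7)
The plan is an approximation scheme followed by passage to the limit, with the Aronson-Benilan inequality obtained by a maximum-principle computation at the approximate level. I would begin by regularizing: take $u_{0,\varepsilon}$ a smooth approximation of $u_0$ with $u_{0,\varepsilon}\ge\varepsilon$, and approximate $f,g_i$ by smooth non-negative $f_\varepsilon,g_{i,\varepsilon}$. Since $u_\varepsilon\ge\varepsilon>0$ by the minimum principle for the Neumann problem, the regularized equation
\begin{equation*}
(u_\varepsilon)_t=\Delta u_\varepsilon^m\text{ in }\Omega_\delta\times(0,T),\quad u_\varepsilon(\cdot,0)=u_{0,\varepsilon},\quad \partial_\nu u_\varepsilon^m=f_\varepsilon\text{ on }\partial\Omega,\quad \partial_\nu u_\varepsilon^m=g_{i,\varepsilon}\text{ on }\partial B_\delta(a_i)
\end{equation*}
is uniformly parabolic, so classical quasilinear parabolic theory yields a smooth solution. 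The main a priori estimates needed for the limit are an $L^\infty(\Omega_\delta\times[0,T])$ bound on $u_\varepsilon$ uniform in $\varepsilon$, obtained by a Moser iteration in which the hypothesis $p>n(1-m)/2$ ensures the scaling exponents close, and a uniform positive lower bound on $u_\varepsilon$ on compact subsets of $\Omega_\delta\times(0,\infty)$, obtained by a sub-solution comparison or Harnack estimate. These, combined with interior parabolic regularity, yield a subsequence converging locally uniformly to a positive classical solution $u$ of \eqref{Neumann-holes-eqn}.

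Uniqueness follows by the Brezis-Crandall duality argument. If $u_1,u_2$ are two solutions and $w=u_1-u_2$, then $w_t=\Delta(aw)$ with $a(x,t)=(u_1^m-u_2^m)/(u_1-u_2)\ge 0$, vanishing initial data, and homogeneous Neumann data. Testing against the solution of a suitable adjoint linear problem yields $\int_{\Omega_\delta}w(\cdot,t)\,\phi\,dx=0$ for arbitrary smooth test functions $\phi$, hence $w\equiv 0$. The mass identity \eqref{mass-formulua-holes} is then immediate by integrating $u_t=\Delta u^m$ over $\Omega_\delta\times(0,t)$, applying Green's identity, and splitting $\partial\Omega_\delta=\partial\Omega\cup\bigcup_i\partial B_\delta(a_i)$.

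For the Aronson-Benilan inequality \eqref{Aronson-Benilan-ineqn}, I work at the approximate level with $f_\varepsilon\equiv 0$ and smooth nonnegative $g_{i,\varepsilon}$ that are monotone decreasing in $t$, and introduce
\begin{equation*}
\Phi(x,t):=t(u_\varepsilon)_t(x,t)-\frac{u_\varepsilon(x,t)}{1-m}.
\end{equation*}
A direct computation starting from $(u_\varepsilon)_t=\Delta u_\varepsilon^m$ and its time derivative $(u_\varepsilon)_{tt}=\Delta(mu_\varepsilon^{m-1}(u_\varepsilon)_t)$ shows that $\Phi$ satisfies the \emph{linear} parabolic equation
\begin{equation*}
\Phi_t=\Delta\bigl(mu_\varepsilon^{m-1}\Phi\bigr)\quad\text{in }\Omega_\delta\times(0,\infty),
\end{equation*}
with $\Phi(x,0^+)=-u_{0,\varepsilon}(x)/(1-m)\le 0$ and Neumann data
\begin{equation*}
\frac{\partial(mu_\varepsilon^{m-1}\Phi)}{\partial\nu}=t\,\partial_tf_\varepsilon-\frac{mf_\varepsilon}{1-m}\le 0\text{ on }\partial\Omega,\qquad \frac{\partial(mu_\varepsilon^{m-1}\Phi)}{\partial\nu}=t\,\partial_tg_{i,\varepsilon}-\frac{mg_{i,\varepsilon}}{1-m}\le 0\text{ on }\partial B_\delta(a_i),
\end{equation*}
both nonpositive under the hypotheses. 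A standard maximum-principle argument for the linear equation with smooth positive coefficient $mu_\varepsilon^{m-1}$ then gives $\Phi\le 0$, i.e.\ $(u_\varepsilon)_t\le u_\varepsilon/[(1-m)t]$, and the inequality is preserved in the limit $\varepsilon\to 0$. I expect the main obstacle to be securing the uniform lower bound on $u_\varepsilon$ so that the limit is genuinely a positive classical solution for which $u_t$ has pointwise meaning, together with justifying the maximum principle near the singularity at $t=0^+$; the latter is handled by the classical trick of first proving the estimate on $[t_0,\infty)$ via a time-shift to the solution $\tilde u(x,s)=u(x,s+t_0)$ (for which the monotonicity of $g_i$ is preserved on $[t_0,\infty)$), and then sending $t_0\to 0^+$.
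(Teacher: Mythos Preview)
Your overall strategy matches the paper's: build smooth approximate solutions (the paper does this in three layers---smooth positive data, then $L^\infty$ data, then $L^p$ data---while you use a single $\varepsilon$-regularization, but the content is the same), obtain uniform $L^\infty$ bounds by Moser iteration using $p>n(1-m)/2$, extract a limit via Schauder estimates, and settle uniqueness by $L^1$ contraction (the paper invokes a direct comparison lemma rather than Brezis--Crandall duality, but these are equivalent here).

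The Aronson--Benilan argument is where your route genuinely diverges, and where there is a gap. The paper works with the ratio $q=u_t/u$, derives the \emph{nonlinear} equation
\[
q_t=mu^{m-1}\Delta q+2mu^{m-2}\nabla u\cdot\nabla q-(1-m)q^2,
\]
and shows that $\bar q=q-\frac{1}{(1-m)(a+t)}$ cannot attain a positive maximum: the quadratic term $-(1-m)\bar q^2$ supplies the interior contradiction, and on $\partial B_\delta(a_i)$ the Hopf lemma together with $g_{i,t}\le 0$ forces $\partial_\nu\bar q\le 0$. Your linear identity $\Phi_t=\Delta(mu^{m-1}\Phi)$ is correct and elegant, but ``standard maximum principle'' does not close as written. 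Expanding, the equation for $\Phi$ carries the zero-order coefficient $\Delta(mu^{m-1})$ of indeterminate sign; more seriously, at a positive boundary maximum of $\Phi$ on $\partial B_\delta(a_i)$ the condition $\partial_\nu(mu^{m-1}\Phi)\le 0$ does \emph{not} force $\partial_\nu\Phi\le 0$, because $\partial_\nu(mu^{m-1})=m(m-1)u^{m-2}\partial_\nu u\le 0$ there, so Hopf's lemma gives no contradiction. The fix is to run the argument on $v:=mu^{m-1}\Phi=t(u^m)_t-\tfrac{m}{1-m}u^m$ instead, which satisfies $v_t=mu^{m-1}\Delta v+(m-1)\tfrac{u_t}{u}\,v$ with genuine Neumann data $\partial_\nu v\le 0$; the bounded zero-order coefficient is then absorbed by the usual $e^{-Mt}$ substitution. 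Alternatively a duality test against the backward problem $-\psi_t=mu^{m-1}\Delta\psi$, $\partial_\nu\psi=0$, works. Either repair is routine; the paper's nonlinear formulation avoids the issue at the cost of a slightly heavier boundary computation.
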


\begin{thm}\label{singular-soln-existence-thm}
Let $n\ge 3$, $0<m\le\frac{n-2}{n}$, $p>\frac{n(1-m)}{2}$, $0\le f\in L_{loc}^{\infty}(\1\Omega\times [0,\infty))$. Let $0\le u_0\in L_{loc}^p(\hat{\Omega})$ be such that 
\begin{equation}\label{u0-blow-up-condition2}
\frac{C_1}{|x-a_i|^qe^{\frac{1}{\delta_1^2-|x-a_i|^2}}}\le u_0(x)\le\frac{C_2}{|x-a_i|^q}\qquad \forall 0<|x-a_i|\le \delta_1, i=1,\cdots,i_0
\end{equation} 
for some constants $C_1>0$, $C_2>0$, $q\ge\max\left(\frac{n}{2m},\frac{n-2}{m}\right)$ and 
$0<\delta_1<\min\left(\frac{(1-m)q}{4+(1-m)q},\delta_0\right)$. 
Then there exists a solution $u$ of
\begin{equation}\label{singular-Neumann-problem}
\left\{\begin{aligned}
u_t=&\Delta u^m\quad\mbox{ in }\hat{\Omega}\times(0,\infty)\\
\frac{\partial u^m}{\partial\nu}=&f\qquad\, \mbox{ on }\partial\Omega\times(0,\infty)\\
u(x,0)=&u_0(x)\quad\mbox{ in }\hat{\Omega}
\end{aligned}\right.
\end{equation}
such that 
\begin{equation}\label{singular-soln-upper-lower-bd}
u(x,t)\ge\frac{C_1}{|x-a_i|^qe^{\frac{1}{\delta_1^2-|x-a_i|^2}}}\quad\forall 0<|x-a_i|<\delta_1,t>0, i=1,\cdots,i_0
\end{equation}
and
\begin{equation}\label{singular-soln-upper-lower-bd2}
u(x,t)\le\frac{C_T}{|x|^q}\qquad\qquad\forall 0<|x-a_i|\le\frac{\delta_1}{2},0<t\le T, i=1,\cdots,i_0
\end{equation}
hold for some constant $C_T>0$ where $\1/\1\nu$ is the derivative with respect to the unit outward normal on $\1\Omega$.
\end{thm}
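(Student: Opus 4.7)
The plan is to construct $u$ as a limit of Neumann solutions $u_\delta$ on the perforated domains $\Omega_\delta$ provided by Theorem~\ref{Neumann-holes-problem-existence-thm}, where the interior Neumann data on $\partial B_\delta(a_i)$ is chosen so that the stationary profile built from the lower bound of \eqref{u0-blow-up-condition2} is automatically preserved as a subsolution throughout the evolution. Concretely I set
\begin{equation*}
\underline{u}(x):=\frac{C_1}{|x-a_i|^q\,e^{1/(\delta_1^2-|x-a_i|^2)}}\quad\text{on }B_{\delta_1}(a_i),\; i=1,\dots,i_0,
\end{equation*}
and extend by zero outside $\bigcup_i B_{\delta_1}(a_i)$; since $\delta_1<\delta_0$, the support of $\underline{u}$ lies away from $\partial\Omega$. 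The first task is the verification that $\Delta\underline{u}^m\ge 0$ on $\hat{\Omega}$. Writing $\underline{u}^m=C_1^m r^{-qm}e^{-m/(\delta_1^2-r^2)}$ with $r=|x-a_i|$, the radial Laplacian has dominant term $qm(qm-(n-2))r^{-qm-2}$, nonnegative by $q\ge(n-2)/m$, while the corrections coming from differentiating the exponential cutoff (of type $r^{-qm+j}(\delta_1^2-r^2)^{-k}$ with $k\le 4$) are absorbed by this main term precisely when $\delta_1<(1-m)q/(4+(1-m)q)$---a one-variable but delicate estimate that is the first main obstacle.

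Next, for each $0<\delta<\delta_1/2$ define the nonnegative Neumann datum
\begin{equation*}
g_{i,\delta}:=\frac{\partial\underline{u}^m}{\partial\nu}\bigg|_{\partial B_\delta(a_i)},
\end{equation*}
which is strictly positive and time-independent because $\underline{u}$ is radially decreasing near $a_i$ and $\nu$ points into $B_\delta(a_i)$. Theorem~\ref{Neumann-holes-problem-existence-thm} then yields a unique solution $u_\delta$ of \eqref{Neumann-holes-eqn} on $\Omega_\delta\times(0,\infty)$ with initial datum $u_0|_{\Omega_\delta}\in L^p(\Omega_\delta)$, boundary flux $f$ on $\partial\Omega$, and $g_{i,\delta}$ on $\partial B_\delta(a_i)$. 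Since $u_0\ge\underline{u}$ on $\Omega_\delta$ by \eqref{u0-blow-up-condition2}, the fluxes of $u_\delta^m$ and $\underline{u}^m$ agree on each $\partial B_\delta(a_i)$, and $f\ge 0=\partial\underline{u}^m/\partial\nu$ on $\partial\Omega$, the weak comparison principle gives $u_\delta\ge\underline{u}$ throughout $\Omega_\delta\times(0,\infty)$, which will yield \eqref{singular-soln-upper-lower-bd} in the limit. For the upper bound \eqref{singular-soln-upper-lower-bd2}, I would compare with the explicit supersolution $\bar{u}(x,t)=(\tilde{C}_2^{1-m}+\lambda t)^{1/(1-m)}|x-a_i|^{-q}$ on $B_{\delta_1/2}(a_i)$: the algebraic identity $s-1=sm$ for $s=1/(1-m)$ cancels the time dependence and reduces the supersolution inequality to $\lambda/(1-m)\ge qm(qm-n+2)\,|x-a_i|^{q(1-m)-2}$, whose right-hand side is bounded on $|x-a_i|\le\delta_1/2$ because $q\ge(n-2)/m$ together with $m\le(n-2)/n$ forces $q(1-m)\ge 2$. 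One then chooses $\lambda$ large enough, and $\tilde{C}_2$ large enough to dominate $u_0$ initially and $u_\delta$ on the outer sphere $\partial B_{\delta_1/2}(a_i)$, the latter bound coming from uniform interior $L^\infty$ estimates on $u_\delta$ over compact subsets of $\hat{\Omega}$ supplied by standard fast-diffusion interior regularity.

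The two-sided pointwise bounds $\underline{u}\le u_\delta\le C_T|x-a_i|^{-q}$ combined with interior Hölder/Schauder estimates for the fast diffusion equation then provide enough parabolic compactness to extract a diagonal subsequence $u_{\delta_k}\to u$ locally uniformly on $\hat{\Omega}\times(0,\infty)$. The limit $u$ is a classical positive solution of $u_t=\Delta u^m$ on $\hat{\Omega}\times(0,\infty)$ that satisfies $\partial u^m/\partial\nu=f$ on $\partial\Omega$ (the Neumann condition passes to the limit in its weak formulation), attains $u_0$ initially, and inherits \eqref{singular-soln-upper-lower-bd} and \eqref{singular-soln-upper-lower-bd2}. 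I expect the hardest steps to be the two barrier constructions---in particular the subsolution verification where the sharp smallness of $\delta_1$ is exactly what is needed---and, within the upper-bound comparison, the mismatch between the Neumann data on $\partial B_\delta(a_i)$ and the Dirichlet-type data implicit in $\bar{u}$; the cleanest way to handle the latter is likely to establish the upper bound \emph{a posteriori} on the limit $u$ by comparing with $\bar{u}$ on the shrinking annuli $B_{\delta_1/2}(a_i)\setminus\overline{B_\rho(a_i)}$ and letting $\rho\to 0$.
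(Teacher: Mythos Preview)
Your overall architecture---solve Neumann problems on $\Omega_\delta$ with specially chosen flux on $\partial B_\delta(a_i)$, trap $u_\delta$ between explicit barriers, and extract a limit by compactness---is exactly the paper's strategy. Your subsolution argument is also the paper's Lemma~\ref{comparison-lem2}: choosing $g_{i,\delta}=\partial\underline{u}^m/\partial\nu$ rather than the paper's fixed constant $\alpha/\delta^{mq+1}$ is immaterial, since both dominate the inner normal derivative of $\underline{u}^m$ and Lemma~\ref{comparsion-lemma1} gives $u_\delta\ge\underline{u}$.

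The genuine gap is in your upper barrier. Your $\bar u(x,t)=(\tilde C_2^{1-m}+\lambda t)^{1/(1-m)}|x-a_i|^{-q}$ is a fine supersolution of the PDE and handles the \emph{inner} Neumann boundary $\partial B_\delta(a_i)$, but it gives you nothing at the \emph{outer} boundary of the comparison region. On $\partial B_{\delta_1/2}(a_i)$ you would need either (i) a Dirichlet inequality $u_\delta\le\bar u$, which requires a uniform $L^\infty$ bound on $u_\delta$ there valid down to $t=0$ (the interior $L^p$--$L^\infty$ estimates of Proposition~\ref{Lp-L-infty-thm-3} only kick in for $t\ge t_1>0$, and the constants blow up as $t_1\to 0$); or (ii) a Neumann inequality, but $\partial\bar u^m/\partial r<0$ there while $\partial u_\delta^m/\partial r$ is uncontrolled. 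Your proposed ``a posteriori'' fix on the limit $u$ is circular: on the shrinking inner sphere $\partial B_\rho(a_i)$ you would need $u\le\bar u$, which is precisely the bound you are trying to establish.

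The paper resolves this with a barrier that blows up at the \emph{outer} radius:
\[
\phi_{A_1}(x,t)=\frac{A_1(1+t)^{1/(1-m)}}{|x|^{q}(\delta_1-|x|)^{2/(1-m)}}
\]
(Lemma~\ref{soln-bded-holes-lemma}). Because $\partial\phi_{A_1}^m/\partial r\to+\infty$ as $|x|\to\delta_1^-$, one can always find $\delta''<\delta_1$ where this normal derivative dominates the (finite, since $u_\delta$ is classical for $t\ge t_1$) gradient of $u_\delta^m$, so the comparison on $B_{\delta_1'}\setminus B_\delta$ is Neumann--Neumann on both components and Lemma~\ref{comparsion-lemma1} applies; one then sends $\delta_1'\to\delta_1$ and $t_1\to 0$, using only $u_0\le\phi_{A_1}(\cdot,0)$ at the initial time. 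Replacing your $\bar u$ by $\phi_{A_1}$ (or any supersolution with a built-in blow-up at the outer edge of the annulus) closes the gap.
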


\begin{thm}\label{singular-soln-existence-thm2}
Let $n\ge 3$, $0<m\le\frac{n-2}{n}$, $p>\frac{n(1-m)}{2}$. Let $0\le u_0\in L_{loc}^p(\hat{\R})$ be such that 
\eqref{u0-blow-up-condition2} holds for some constants $C_1>0$, $C_2>0$, $q\ge\max\left(\frac{n}{2m},\frac{n-2}{m}\right)$ and 
$$
0<\delta_1<\min\left(\frac{(1-m)q}{4+(1-m)q},\frac{1}{2}\min_{1\le i,j\le i_0}|a_i-a_j|\right).
$$ 
Then there exists a solution $u$ of
\begin{equation}\label{singular-Neumann-problem2}
\left\{\begin{aligned}
u_t=&\Delta u^m\quad\,\,\mbox{ in }\hat{\R}\times(0,\infty)\\
u(x,0)=&u_0(x)\quad\mbox{ in }\hat{\R}
\end{aligned}\right.
\end{equation}
such that \eqref{singular-soln-upper-lower-bd} and \eqref{singular-soln-upper-lower-bd2} hold for some constant $C_T>0$.
\end{thm}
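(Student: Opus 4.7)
The plan is to construct the solution on $\hat{\R}$ (meaning $\R^n\setminus\{a_1,\dots,a_{i_0}\}$) by an exhaustion argument, reducing to Theorem \ref{singular-soln-existence-thm}. Fix an increasing sequence $R_k\to\infty$ with $R_1>\max_i|a_i|+\delta_1$, set $\Omega^{(k)}=B_{R_k}(0)$, and apply Theorem \ref{singular-soln-existence-thm} with $f\equiv 0$ on $\1\Omega^{(k)}$ and initial datum $u_0|_{\hat{\Omega}^{(k)}}$. This produces, for each $k$, a solution $u_k$ of \eqref{singular-Neumann-problem} on $\hat{\Omega}^{(k)}\times(0,\infty)$ satisfying the lower bound \eqref{singular-soln-upper-lower-bd}, which is automatically uniform in $k$ because $C_1$ and $\delta_1$ come from the initial data alone, and satisfying the upper bound \eqref{singular-soln-upper-lower-bd2} with some (a priori $k$-dependent) constant $C_T^{(k)}$ near each $a_i$.

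The main step is to extract a limit $u_k\to u$ in $C^{2,1}_{loc}(\hat{\R}\times(0,\infty))$. Away from the singular points, the local $L^p$ bound on $u_0$ with $p>n(1-m)/2$ and the $L^p$--$L^\infty$ smoothing effect for \eqref{fast-diffusion-eqn} on compact sets of $\hat{\R}$, combined with the Aronson--B\'enilan inequality \eqref{Aronson-Benilan-ineqn} from Theorem \ref{Neumann-holes-problem-existence-thm} (applied with vanishing Neumann data on inner boundaries for comparison), yield a uniform $L^\infty_{loc}$ bound on $u_k$ on every compact subset of $\hat{\R}\times(0,\infty)$. Classical parabolic regularity for the fast diffusion equation then provides $C^{2,1}_{loc}$-equicontinuity, and a diagonal subsequence extraction produces a limit $u$ solving $u_t=\Delta u^m$ classically on $\hat{\R}\times(0,\infty)$. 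The initial condition $u(\cdot,0)=u_0$ in $L^1_{loc}(\hat{\R})$ is recovered from the mass identity \eqref{mass-formulua-holes} applied on fixed annular test domains $B_R(0)\setminus\bigcup_iB_\rho(a_i)$ together with $u_k(\cdot,0)=u_0$ on $\hat{\Omega}^{(k)}$.

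The hard part will be verifying that the constant $C_T^{(k)}$ controlling $u_k$ near each singular point is in fact independent of $k$, so that the limit $u$ inherits the bound \eqref{singular-soln-upper-lower-bd2}. The strategy is to use the comparison principle inside the fixed cylinder $B_{\delta_1/2}(a_i)\times(0,T)$, comparing $u_k$ with an explicit radial supersolution of the form $V(x,t)=\phi(t)|x-a_i|^{-q}$ where $q\ge\max(n/(2m),(n-2)/m)$; the exponent $q$ and the function $\phi$ are chosen so that $V$ dominates $u_0$ initially (this is exactly the role of the upper bound in \eqref{u0-blow-up-condition2}) and dominates the lateral trace of $u_k$ on $\1 B_{\delta_1/2}(a_i)$. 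That lateral trace is in turn uniformly bounded in $k$: on a fixed annulus surrounding $a_i$ the initial datum is in $L^p$, so the local smoothing effect applied on this annulus, which is away from every $a_j$, furnishes an $L^\infty$ bound on $u_k$ there depending only on $\|u_0\|_{L^p(\text{annulus})}$, $T$ and $\text{dist}(\cdot,\{a_j\})$, not on $R_k$. This pins down a $k$-uniform constant $C_T$ and closes the argument.

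Passing to the limit in \eqref{singular-soln-upper-lower-bd} and \eqref{singular-soln-upper-lower-bd2} for $u_k$ yields the same bounds for $u$, which completes the construction claimed in Theorem \ref{singular-soln-existence-thm2}.
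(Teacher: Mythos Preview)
Your approach matches the paper's: exhaust $\hat{\R}$ by balls $B_j$, apply Theorem~\ref{singular-soln-existence-thm} with zero Neumann data on $\partial B_j$, and pass to a subsequential limit using local uniform bounds and parabolic regularity.

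The step you flag as ``the hard part''---uniformity of $C_T^{(k)}$ in $k$---is in fact free. In the proof of Theorem~\ref{singular-soln-existence-thm} the upper bound near each $a_i$ comes from the explicit barrier $\phi_{A_1}$ of Lemma~\ref{soln-bded-holes-lemma}, and by \eqref{eq-condition-of-A-1-by-max-1} the constant $A_1$ depends only on $C_2$, $m$, $q$, $n$ and the inner Neumann datum (fixed as $\alpha=2m(q+4\delta_1^{-2})$ in that proof); it never involves the outer domain $\Omega$. Hence every $u_k$ already satisfies $u_k(x,t)\le\phi_{A_1}(x-a_i,t)$ with one and the same $A_1$, and the paper simply records this. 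Your proposed direct comparison of the singular $u_k$ with $V(x,t)=\phi(t)|x-a_i|^{-q}$ on the punctured ball $\hat B_{\delta_1/2}(a_i)$ would be awkward, since a comparison principle there requires some control at the isolated singularity $a_i$; making it rigorous forces you back to the $\varepsilon_j$-approximations inside the proof of Theorem~\ref{singular-soln-existence-thm}, i.e., back to Lemma~\ref{soln-bded-holes-lemma} anyway. The appeal to the Aronson--B\'enilan inequality is also unnecessary: the local $L^\infty$ bounds away from the $a_i$ come directly from the interior estimate in Proposition~\ref{Lp-L-infty-thm-3}, which is independent of the outer radius and of the outer boundary data.
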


The plan of the paper is as follows. In section two we will prove some a priori estimates for $C^{2,1}$ solution of \eqref{Neumann-holes-eqn}. In section three we will prove Theorem \ref{Neumann-holes-problem-existence-thm}. In section four we will prove Theorem \ref{singular-soln-existence-thm} and Theorem \ref{singular-soln-existence-thm2}.

We start with some notations and definitions that will be used in this paper. Let $\Omega_1\subset\R^n$ be a smooth bounded domain and let $\Sigma_1$, $\Sigma_2$ be relatively open subsets of $\partial\Omega_1$ such that $\partial\Omega_1=\Sigma_1\cup\Sigma_2$ and if $n\geq 2$, then $\overline{\Sigma}_1\cap\overline{\Sigma}_2$ is a $C^2$ manifold of dimension $n-2$. For any $0<m<1$, $0\le u_0\in L^1(\Omega_1)$,
$f\in L_{loc}^1(\Sigma_1\times [0,\infty))$ and $g\in L_{loc}^1(\Sigma_2\times [0,\infty))$, we say that $u$ is a very weak solution (subsolution, supersolution respectively) of
\begin{equation}\label{Dirichlet-Neumann-problem}
\left\{\begin{aligned}
u_t=&\La u^m\quad\mbox{in }\Omega_1\times(0,T)\\
\frac{\partial u^m}{\partial\nu}=&f\qquad\mbox{ on }\Sigma_1\times(0,T)\\
u=&g\qquad\mbox{ on }\Sigma_2\times(0,T)\\
u(x,0)=&u_0(x)\quad\mbox{in }\Omega_1
\end{aligned}\right.
\end{equation}
if $0\le u\in C([0,T);L^1(\Omega_1))$ satisfies ($\ge$, $\le$ respectively)
\begin{align}\label{weak-soln-defn}
&\int_{t_1}^{t_2}\int_{\Omega_1}(u\eta_t+u^m\Delta\eta)\,dxdt+\int_{t_1}^{t_2}\int_{\Sigma_1}f\eta\,d\sigma dt\notag\\
&\qquad\qquad\qquad\qquad =\int_{t_1}^{t_2}\int_{\Sigma_2}g^m\frac{\1\eta}{\1\nu}\,d\sigma dt+\int_{\Omega_1}u(x,t_2)\eta (x,t_2)\,dx-\int_{\Omega_1}u(x,t_1)\eta (x,t_1)\,dx
\end{align}
for any $0<t_1<t_2<T$, and $\eta\in C^2(\2{\Omega}_1\times (0,T))$ satisfying $\eta =0$ on
$\Sigma_2\times (0,T)$, and $\1\eta/\1\nu =0$ on $\Sigma_1\times (0,T)$ and $u$ has initial value $u_0$. We say that $u$ is a solution
(subsolution, supersolution respectively) of \eqref{Dirichlet-Neumann-problem} if $u\in L^{\infty}_{loc}(\overline{\Omega}_1\times(0,T)) $ is positive in $\Omega_1\times (0,T)$ and satisfies \eqref{fast-diffusion-eqn} in $\Omega_1\times (0,T)$ ($\le$, $\ge$ respectively) in the classical sense with 
\begin{equation}\label{u-L1-converge-u0-as-t-goto-0}
\|u(\cdot,t)-u_0\|_{L^1(\Omega_{\delta})}\to 0\quad\mbox { as }t\to 0
\end{equation}
and also satisfies \eqref{weak-soln-defn} ($\ge$, $\le$ respectively) for any $0<t_1<t_2<T$, and $\eta\in C^2(\2{\Omega}_1\times (0,T))$ satisfying $\eta =0$ on $\Sigma_2\times (0,T)$, and $\1\eta/\1\nu =0$ on $\Sigma_1\times (0,T)$. 

We say that $u$ is a solution
(subsolution, supersolution respectively) of \eqref{singular-Neumann-problem} if $u\in L^{\infty}_{loc}((\overline{\Omega}\setminus\{a_1,\dots,a_{i_0}\})\times(0,T))\cap C^{2,1}((\overline{\Omega}\setminus\{a_1,\dots,a_{i_0}\})\times(0,T))$ is positive in $\hat{\Omega}\times (0,T)$ and satisfies \eqref{fast-diffusion-eqn} in $\hat{\Omega}\times (0,T)$ ($\le$, $\ge$ respectively) in the classical sense with 
\begin{equation}\label{u-L1-converge-u0-as-t-goto-0-Omega-hat}
\lim_{t\to 0}\int_{\hat{\Omega}}u(x,t)\eta (x)\,dx=\int_{\hat{\Omega}}u_0\eta\,dx\quad\forall\eta\in C_0^{\infty}(\hat{\Omega})
\end{equation}
and also satisfies 
\begin{equation}\label{weak-singular-soln-defn}
\int_{t_1}^{t_2}\int_{\hat{\Omega}}(u\eta_t+u^m\Delta\eta)\,dxdt+\int_{t_1}^{t_2}\int_{\1\Omega}f\eta\,d\sigma dt
=\int_{\Omega_1}u(x,t_2)\eta (x,t_2)\,dx-\int_{\Omega_1}u(x,t_1)\eta (x,t_1)\,dx
\end{equation}
($\ge$, $\le$ respectively) for any $0<t_1<t_2<T$, and $\eta\in C_0^2((\overline{\Omega}\setminus\{a_1,\dots,a_{i_0}\})\times(0,T))$ satisfying $\1\eta/\1\nu =0$ on $\partial\Omega\times (0,T)$. 

We say that $u$ is a solution
(subsolution, supersolution respectively) of \eqref{singular-Neumann-problem2} if $u\in L^{\infty}_{loc}(\hat{\R}\times(0,T))\cap C^{2,1}(\hat{\R}\times(0,T))$ is positive in $\hat{\R}\times (0,T)$ and satisfies \eqref{fast-diffusion-eqn} in $\hat{\R}\times (0,T)$ ($\le$, $\ge$ respectively) in the classical sense with 
\begin{equation}\label{u-L1-converge-u0-as-t-goto-0-Omega-hat*}
\lim_{t\to 0}\int_{\hat{\Omega}}u(x,t)\eta (x)\,dx=\int_{\hat{\Omega}}u_0\eta\,dx\quad\forall\eta\in C_0^{\infty}(\hat{\R}).
\end{equation}

For any $x_0\in\R^n$,
$x_0'\in\R^{n-1}$, $R>0$, we let $B_R(x_0)=\{x\in\R^n:|x-x_0|<R\}$, $B'_R(x_0')=\{x'\in\R^{n-1}:|x-x_0'|<R\}$, $\hat{B}_R(x_0)=B_R(x_0)\setminus\{x_0\}$, $B_R=B_R(0)$, $\hat{B}_R=\hat{B}_R(0)$ and $B_R'=B_R'(0)$. For any $a\in\R$, we let $a_+=\max (a,0)$ and $a_-=\max (-a,0)$. For any set $A\in\R^n$, we let $\chi_A$ be the characteristic function of the set $A$.

\section{A priori estimates}
\setcounter{equation}{0}
\setcounter{thm}{0}

In this section we will prove some a priori estimates for the solutions of \eqref{Neumann-holes-eqn}. We will also prove a $L^p-L^{\infty}$ estimates for the solutions of \eqref{Neumann-holes-eqn} for some constant $p>1$. These $L^p-L^{\infty}$ estimates will be used in section three to give uniform upper bound for the approximating $C^{2,1}$ solutions of \eqref{Neumann-holes-eqn} which appear in the construction of solution of \eqref{Neumann-holes-eqn}.
Note that 
similar $L^{\infty}$ estimates are also obtained in \cite{BV2}, \cite{CD}, \cite{CV}, \cite{D}, \cite{DK},\cite{DGV1}, \cite{DGV2}, \cite{DK}, \cite{DKV} and \cite{HP}. 
We first observe that by an argument 
similar to the proof of Lemma 2.3 of \cite{DaK} we have the following result.

\begin{lemma}[cf. Lemma 1.1 of \cite{Hs2}]\label{comparsion-lemma1}
Let $\Omega_1\subset\R^n$ be a smooth bounded domain and let $\Sigma_1$, $\Sigma_2$ be relatively open subsets of $\partial\Omega_1$ such that $\partial\Omega_1=\Sigma_1\cup\Sigma_2$ and if $n\ge 2$ then $\overline{\Sigma}_1\cap\overline{\Sigma}_2$ is a $C^2$ manifold of dimension $n-2$. Let $0\leq u_{0,1}$, $u_{0,2}\in L^{1}(\Omega)$, $f_1$, $f_2\in L^1(\Sigma_1\times(0,T))$ and $g_1$, $g_2\in C(\Sigma_2\times(0,T))$ be such that $0\le g_1\leq g_2$ on $\Sigma_2\times(0,T)$. Suppose $u_1$, $u_2$, are subsolution and supersolution of
\eqref{Dirichlet-Neumann-problem} in $\Omega_1\times(0,T)$ with $f=f_1$, $f_2$, $g=g_1$, $g_2$ and
$u_0=u_{0,1}$, $u_{0,2}$, respectively. Then
\begin{equation*}
\int_{\Omega_1}(u_1-u_2)_+(x,t)\,dx\leq \int_{\Omega_1}(u_{0,1}-u_{0,2})_+(x,t)\,dx+\int_0^t\int_{\Sigma_1}(f_1-f_2)_+\,d\sigma ds
\qquad \forall 0\leq t<T.
\end{equation*}
\end{lemma}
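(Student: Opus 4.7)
The plan is a Kato-type duality argument, following the scheme of Lemma 2.3 of \cite{DaK}. First I subtract the very-weak inequalities defining $u_1$ as a subsolution ($\ge$) and $u_2$ as a supersolution ($\le$) tested against a common admissible $\eta\in C^2(\overline{\Omega}_1\times(0,T))$ with $\eta=0$ on $\Sigma_2$ and $\partial\eta/\partial\nu=0$ on $\Sigma_1$. Setting $w=u_1-u_2$, $W=u_1^m-u_2^m$, and defining
\begin{equation*}
a(x,t)=\begin{cases} W/w & \text{if }w\ne 0,\\ 0 & \text{otherwise,}\end{cases}
\end{equation*}
which is nonnegative since $r\mapsto r^m$ is monotone on $[0,\infty)$, the subtraction yields
\begin{align*}
\int_{t_1}^{t_2}\!\int_{\Omega_1}\!\bigl(w\eta_t+W\Delta\eta\bigr)\,dxdt&+\int_{t_1}^{t_2}\!\int_{\Sigma_1}(f_1-f_2)\eta\,d\sigma dt\\
&\ge\int_{t_1}^{t_2}\!\int_{\Sigma_2}(g_1^m-g_2^m)\frac{\partial\eta}{\partial\nu}\,d\sigma dt+\int_{\Omega_1}w\eta\,dx\Big|_{t=t_1}^{t=t_2}.
\end{align*}

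Next I construct the test function by duality. Fix $t_2\in(0,T)$ and $\psi\in C_0^\infty(\Omega_1)$ with $0\le\psi\le 1$. For $\epsilon>0$ let $a_\epsilon$ be a smooth mollification of $\max(a,\epsilon)$, so $\epsilon\le a_\epsilon\le M_\epsilon<\infty$, and let $\eta_\epsilon$ be the classical solution of the backward linear problem
\begin{equation*}
\eta_t+a_\epsilon\Delta\eta=0\text{ in }\Omega_1\times(0,t_2),\quad \frac{\partial\eta}{\partial\nu}=0\text{ on }\Sigma_1,\quad \eta=0\text{ on }\Sigma_2,\quad \eta(\cdot,t_2)=\psi.
\end{equation*}
The parabolic maximum principle (after time reversal, applied both to $\eta_\epsilon$ and to $1-\eta_\epsilon$) gives $0\le\eta_\epsilon\le 1$, and the Hopf lemma on $\Sigma_2$ gives $\partial\eta_\epsilon/\partial\nu\le 0$ there. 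Since $g_1\le g_2$ implies $g_1^m-g_2^m\le 0$, the product $(g_1^m-g_2^m)\partial\eta_\epsilon/\partial\nu$ is nonnegative, so the $\Sigma_2$-boundary term may be dropped (in our favor) when $\eta$ is replaced by $\eta_\epsilon$.

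Using $w\eta_{\epsilon,t}=-a_\epsilon w\Delta\eta_\epsilon$ and $0\le\eta_\epsilon\le 1$, rearrangement of the displayed inequality above gives
\begin{equation*}
\int_{\Omega_1}w(\cdot,t_2)\psi\,dx\le\int_{\Omega_1}w_+(\cdot,t_1)\,dx+\int_{t_1}^{t_2}\!\int_{\Omega_1}(W-a_\epsilon w)\Delta\eta_\epsilon\,dxdt+\int_{t_1}^{t_2}\!\int_{\Sigma_1}(f_1-f_2)_+\,d\sigma dt.
\end{equation*}
The main obstacle is showing that the middle error term vanishes as $\epsilon\to 0$. This is handled by the standard duality energy identity, obtained by multiplying $\eta_{\epsilon,t}+a_\epsilon\Delta\eta_\epsilon=0$ by $-\Delta\eta_\epsilon$ and integrating by parts (the boundary conditions on $\eta_\epsilon$ kill all boundary contributions):
\begin{equation*}
\int_0^{t_2}\!\int_{\Omega_1}a_\epsilon(\Delta\eta_\epsilon)^2\,dxdt\le C\|\psi\|_{H^1(\Omega_1)}^2,
\end{equation*}
together with Cauchy--Schwarz and the pointwise bound $(W-a_\epsilon w)^2/a_\epsilon=(a-a_\epsilon)^2w^2/a_\epsilon\le\epsilon\, w^2$, which follows from the choice $a_\epsilon\ge\max(a,\epsilon)$ and holds in $L^1$ since $w\in L^\infty_{\text{loc}}$ on $[t_1,t_2]$. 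Letting $\epsilon\to 0$, then taking the supremum over $\psi\in C_0^\infty(\Omega_1)$ with $0\le\psi\le 1$ to recover $\int_{\Omega_1}w_+(\cdot,t_2)\,dx$ on the left, and finally sending $t_1\to 0$ using $u_i\in C([0,T);L^1(\Omega_1))$ gives the claimed $L^1$ contraction.
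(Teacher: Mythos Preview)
Your proposal is correct and is precisely the Dahlberg--Kenig duality argument that the paper invokes: the paper does not give its own proof of this lemma but simply states that it follows ``by an argument similar to the proof of Lemma~2.3 of \cite{DaK}'', which is exactly the scheme you sketch. The only places to tighten are routine: the pointwise bound $(a-a_\epsilon)^2/a_\epsilon\le\epsilon$ holds exactly for $a_\epsilon=\max(a,\epsilon)$ and only approximately after mollification (handle this with a two-parameter limit, mollification scale first), and the well-posedness of the backward dual problem with mixed Neumann--Dirichlet data near the interface $\overline{\Sigma}_1\cap\overline{\Sigma}_2$ deserves a remark, but neither is a genuine gap.
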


By the same argument as the proof of Lemma 3.4 of \cite{Hu1} we have the following result.

\begin{lemma}[cf. Lemma 3.4 of \cite{Hu1}]\label{positive-angle-at-bdary-lemma}
Suppose $\Omega\subset\R^n$ is a smooth bounded convex domain. For any $x\in\1\Omega$, $x_0\in\Omega$, let $n(x)$ be the unit outward normal vector at $x$ with respect to $\Omega$ and let $n_1(x)$ be the unit vector along the line segment $\overrightarrow{x_0x}$ from $x_0$ to $x$. If $\theta (x)$ is the
angle between $n(x)$ and $n_1(x)$, then there exists a constant
$0<c_0\le 1$ such that
\begin{equation*}
0<c_0\le\text{cos }\theta (x)\le 1\quad\forall x\in\1\Omega.
\end{equation*}
\end{lemma}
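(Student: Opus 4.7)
The plan is to rewrite the cosine of the angle in a form to which convex geometry applies directly, and then upgrade pointwise positivity to a uniform lower bound via compactness. Observe that
$$
\cos\theta(x)=n(x)\cdot n_1(x)=\frac{n(x)\cdot(x-x_0)}{|x-x_0|},
$$
so it suffices to show that this quotient stays bounded below by a positive constant as $x$ ranges over $\partial\Omega$.

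First I would establish the strict pointwise positivity $n(x)\cdot(x-x_0)>0$ for every $x\in\partial\Omega$. Because $\Omega$ is a smooth convex domain, the tangent hyperplane $H_x=\{y\in\R^n:(y-x)\cdot n(x)=0\}$ is a supporting hyperplane, so $\overline{\Omega}\subset\{y:(y-x)\cdot n(x)\le 0\}$. Smoothness of $\partial\Omega$ combined with convexity then forces the open set $\Omega$ to lie in the \emph{strict} half-space $\{y:(y-x)\cdot n(x)<0\}$; otherwise some interior point of $\Omega$ would lie on a supporting hyperplane, which is incompatible with $\partial\Omega$ being a smooth hypersurface with outward normal $n(x)$. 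Applying this to $y=x_0\in\Omega$ yields $(x_0-x)\cdot n(x)<0$, i.e. $(x-x_0)\cdot n(x)>0$, and hence $\cos\theta(x)>0$ pointwise on $\partial\Omega$.

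To promote this pointwise positivity to a uniform lower bound I would invoke compactness. Smoothness of $\partial\Omega$ makes $x\mapsto n(x)$ continuous on $\partial\Omega$, and since $x_0\in\Omega$ is fixed we have $|x-x_0|\ge\mbox{dist}(x_0,\partial\Omega)>0$ for every boundary point $x$; consequently $x\mapsto\cos\theta(x)$ is continuous on the compact set $\partial\Omega$. It therefore attains its minimum at some $x^{\ast}\in\partial\Omega$, and setting $c_0:=\cos\theta(x^{\ast})$ the pointwise positivity just established gives $c_0>0$. The inequality $\cos\theta(x)\le 1$ is automatic from the fact that $n(x)$ and $n_1(x)$ are unit vectors.

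The only delicate step is the strict separation of $\Omega$ from the tangent hyperplane $H_x$, which relies crucially on $x_0$ lying in the \emph{interior} of $\Omega$ (not merely in $\overline{\Omega}$) together with the smoothness of $\partial\Omega$; at a non-smooth boundary point, or with $x_0\in\partial\Omega$, the conclusion can fail. Apart from that, the argument is a routine combination of supporting-hyperplane geometry and compactness, and I do not anticipate any further obstacle.
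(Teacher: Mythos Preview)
Your argument is correct and is the standard supporting-hyperplane-plus-compactness proof. The paper itself does not give a self-contained proof of this lemma; it simply records that the result follows ``by the same argument as the proof of Lemma~3.4 of \cite{Hu1}'', so there is nothing in the paper to compare against beyond that citation, and your approach is precisely the natural one such a reference would point to.
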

Now, we are going to prove some estimates for the solutions of \eqref{Neumann-holes-eqn}.

\begin{lemma}\label{soln-bded-holes-lemma}
Let $n\ge 1$, $0<m<1$, $T>0$, $q\ge\frac{2}{1-m}$, $0<\delta_1<\min\left(\frac{(1-m)q}{4+(1-m)q},\delta_0\right)$, and 
$0<\delta\le\delta_2<\left(\frac{(1-m)q}{4+(1-m)q}\right)\delta_1$. Let $0\leq u_0\in 
L^1(\overline{\Omega}_{\delta})$ such that 
\begin{equation}\label{u0-blow-up-condition}
u_0(x)\le\frac{C_2}{|x-a_i|^{q}} \qquad \forall \delta\le |x-a_i|<\delta_1, i=1,\cdots,i_0
\end{equation}
holds for some constant $C_2>0$. Let $f\in L^1(\partial\Omega\times(0,T))$ and $g_i\in L^1(\partial B_{\delta}(a_i)\times(0,T))$,
$i=1,2,\dots,i_0$, be such that $\sup f<\infty$, $\sup g_i<\infty$, for all $i=1,2,\dots,i_0$.
Let $u$ be a solution of 
\begin{equation}\label{Neumann-problem-holes}
\left\{\begin{aligned}
u_t=&\La u^m\qquad \mbox{ in }\Omega_{\delta}\times(0,T)\\
\frac{\partial u^m}{\partial\nu}=&f\qquad\quad\, \mbox{ on }\partial\Omega\times(0,T)\\
\frac{\partial u^m}{\partial\nu}=&\frac{g_i}{\delta^{mq+1}}\quad\mbox{ on }B_{\delta}(a_i)\times(0,T)\qquad\forall i=1,\dots,i_0\\
u(x,0)=&u_0(x)\qquad\mbox{in }\Omega_{\delta}.
\end{aligned}\right.
\end{equation}
Then there exists a constant $A_1>0$ such that
\begin{equation}\label{u-upper-bd-phi-tidle}
u(x,t)\leq \phi_{A_1}(x-a_i,t) \qquad \forall\delta\le|x-a_i|<\delta_1,\,\,0\leq t<T, i=1,\cdots,i_0
\end{equation}
holds for all $0<\delta\leq \delta_2$ where
\begin{equation}\label{phi-A-defn}
\phi_{A_1}(x,t)=\frac{A_1(1+t)^{\frac{1}{1-m}}}{|x|^{q}(\delta_1-|x|)^{\frac{2}{1-m}}}.
\end{equation}
If $\Omega$ is a smooth convex domain and 
\begin{equation}\label{u0-L-infty-bd-condition}
\|u_0\|_{L^{\infty}(\overline{\Omega}_{\delta})}\le M_0
\end{equation} 
holds for some constant $M_0>0$, then there exists a constant $M_1>0$ 
depending on $M_0$ such that
\begin{equation}\label{u-upper-bd1}
u(x,t)\leq M_1 \quad \forall (x,t)\in\overline{\Omega}_{\delta_2}\times[0,T)
\end{equation}
holds for any $0<\delta\leq\delta_2$.
\end{lemma}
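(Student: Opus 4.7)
The plan is to establish both assertions by constructing explicit radial supersolutions and comparing with $u$ via Lemma \ref{comparsion-lemma1} (or, where appropriate, the classical parabolic maximum principle applied to smooth comparisons). For the first assertion, $\phi_{A_1}(\,\cdot\,-a_i,\,\cdot\,)$ is itself the candidate supersolution on each annular region $\mathcal{A}_i^{\epsilon}=\{\delta\le|x-a_i|\le\delta_1-\epsilon\}$, and we send $\epsilon\downarrow 0$ in the end. For the second assertion I would construct an auxiliary globally smooth supersolution on $\Omega$ built from the distance to a fixed interior base point, using the convexity of $\Omega$ together with Lemma \ref{positive-angle-at-bdary-lemma}.

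Writing $r=|x-a_i|$, a direct differentiation gives $(\phi_{A_1})_t=\phi_{A_1}/[(1-m)(1+t)]$, while $\Delta\phi_{A_1}^m$ decomposes, after factoring out $\phi_{A_1}^m$, into a sum of radial terms of the form $r^{-2}$, $r^{-1}(\delta_1-r)^{-1}$, $(\delta_1-r)^{-2}$. Using $\phi_{A_1}^{m-1}=A_1^{m-1}(1+t)^{-1}r^{q(1-m)}(\delta_1-r)^2$, the inequality $(\phi_{A_1})_t\ge\Delta\phi_{A_1}^m$ becomes, after dividing by $\phi_{A_1}/[(1-m)(1+t)]$, a polynomial inequality whose leading coefficients are the bounded quantities $r^{q(1-m)-2}(\delta_1-r)^2$, $r^{q(1-m)-1}(\delta_1-r)$, $r^{q(1-m)}$. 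The hypothesis $q\ge 2/(1-m)$ keeps $r^{q(1-m)-2}$ bounded on $(0,\delta_1)$, while $\delta_1<(1-m)q/(4+(1-m)q)$ supplies just enough smallness to absorb the (possibly negative) mixed cross-term into the dominant $1/[(1-m)(1+t)]$ on the left. Consequently there is a threshold $A_1^\ast=A_1^\ast(m,q,n,\delta_1)$ above which $\phi_{A_1}$ is a supersolution of \eqref{fast-diffusion-eqn} on $\mathcal{A}_i^{\epsilon}$. The initial inequality $\phi_{A_1}(x,0)\ge u_0(x)$ then reduces by \eqref{u0-blow-up-condition} and $(\delta_1-r)^{2/(1-m)}\le\delta_1^{2/(1-m)}$ to $A_1\ge C_2\delta_1^{2/(1-m)}$, and the Neumann inequality at $r=\delta$ (where the outward normal of $\Omega_\delta$ is $-\partial_r$) reduces, after observing that the $-mq/r$ term dominates the $2m/[(1-m)(\delta_1-r)]$ term for $\delta\le\delta_2\ll\delta_1$, to $A_1^m mq\ge\sup g_i$, which is finite by hypothesis. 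Enlarging $A_1$ once more satisfies all three.

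With $\phi_{A_1}$ a supersolution on $\mathcal{A}_i^{\epsilon}\times(0,T)$ obeying the correct Neumann flux at $r=\delta$ and dominating $u_0$ at $t=0$, the remaining task is to dominate $u$ on the outer sphere $\{r=\delta_1-\epsilon\}$. Because $u\in L^\infty_{\mathrm{loc}}(\overline{\Omega}_\delta\times[0,T))$, on any $[0,T']$ with $T'<T$ the function $u$ is bounded on $\{r\le\delta_1-\epsilon\}\times[0,T']$, and since $\phi_{A_1}(\delta_1-\epsilon,t)\nearrow\infty$ as $\epsilon\downarrow 0$ uniformly in $t\in[0,T']$, we may pick $\epsilon$ small enough that $\phi_{A_1}\ge u$ on that outer sphere. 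Lemma \ref{comparsion-lemma1} (applied with $\Sigma_1=\{r=\delta\}$ and $\Sigma_2=\{r=\delta_1-\epsilon\}$, and with $f_2=f_1$, $g_2\ge g_1$ arranged so that the difference vanishes) then gives $u\le\phi_{A_1}$ on $\mathcal{A}_i^{\epsilon}\times[0,T']$, and sending $\epsilon\downarrow 0$, $T'\uparrow T$ yields \eqref{u-upper-bd-phi-tidle} with $A_1$ depending only on $m$, $q$, $n$, $\delta_1$, $C_2$ and $\sup g_i$.

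For \eqref{u-upper-bd1}, fix any $x_0\in\Omega$ with $|x_0-a_i|>\delta_0$ for every $i$, any $\rho>\mathrm{diam}(\Omega)+|x_0|$, and set $\Phi(x,t)=M(1+t)^{1/(1-m)}(\rho^2-|x-x_0|^2)^{-\beta}$ with $\beta>0$ chosen so that a direct computation gives $\Phi_t\ge\Delta\Phi^m$ in $\Omega$ provided $M$ is large enough. The flux check on $\partial\Omega$ uses Lemma \ref{positive-angle-at-bdary-lemma}: $\nabla\Phi^m$ is a positive multiple of $(x-x_0)$, and $(x-x_0)\cdot\nu\ge c_0\,\mathrm{dist}(x_0,\partial\Omega)>0$, so enlarging $M$ gives $\partial\Phi^m/\partial\nu\ge\sup f$. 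On each $\partial B_{\delta_2}(a_i)$ the first part already shows $u\le\phi_{A_1}(\delta_2,t)$, a quantity bounded on $[0,T]$, so $\Phi\ge u$ there for $M$ large; likewise $\Phi(\cdot,0)\ge M_0\ge u_0$ globally for $M$ large. Comparison on $\Omega_{\delta_2}$ with Neumann data on $\partial\Omega$ and Dirichlet data on $\bigcup_i\partial B_{\delta_2}(a_i)$ (again via Lemma \ref{comparsion-lemma1}) gives $u\le\Phi\le M_1$ on $\overline{\Omega}_{\delta_2}\times[0,T)$, where $M_1$ is the maximum of $\Phi$ on $\overline{\Omega}\times[0,T]$. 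The main obstacle is the algebraic verification that $\phi_{A_1}$ is a supersolution: I expect the need to balance three singular scales ($r\to 0$, $r\to\delta_1$, and the time growth) is what forces both the lower bound $q\ge 2/(1-m)$ and the smallness $\delta_1<(1-m)q/(4+(1-m)q)$, and this is where the bookkeeping has to be done carefully.
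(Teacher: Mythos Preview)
Your overall strategy matches the paper's: build an explicit radial barrier $\phi_{A_1}$ on each punctured ball, verify it is a supersolution of \eqref{fast-diffusion-eqn}, check the Neumann flux at $r=\delta$ using the smallness $\delta\le\delta_2<\frac{(1-m)q}{4+(1-m)q}\delta_1$, and compare via Lemma~\ref{comparsion-lemma1}; then for the convex case build a second radial supersolution centered at an interior point and use Lemma~\ref{positive-angle-at-bdary-lemma} to handle $\partial\Omega$. The paper carries this out with essentially the same bookkeeping, giving the explicit choice \eqref{eq-condition-of-A-1-by-max-1} for $A_1$ and using $w(x,t)=A_2(1+t)^{1/(1-m)}e^{|x-a_1|/\delta_2}$ (rather than your $(\rho^2-|x-x_0|^2)^{-\beta}$) for the second barrier.

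There is one genuine difference at the outer edge of the annulus. You impose a \emph{Dirichlet} comparison on $\{r=\delta_1-\epsilon\}$, arguing that $\phi_{A_1}(\delta_1-\epsilon,t)\to\infty$ as $\epsilon\downarrow 0$ while $u$ stays bounded there. The paper instead imposes a \emph{Neumann} comparison: it fixes $0<t_1<t_2<T$, sets $M=\max_{\{\delta'\le r\le\delta_1\}\times[t_1,t_2]}|\nabla u^m|$, uses $\partial_r\phi_{A_1}^m\to\infty$ as $r\to\delta_1^-$ to find $\delta''$ with $\partial_r\phi_{A_1}^m>M$ on $[\delta'',\delta_1)$, and then applies Lemma~\ref{comparsion-lemma1} on $B_{\delta_1'}\setminus B_\delta$ with Neumann data $M$ at $r=\delta_1'\in(\delta'',\delta_1)$. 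Both routes exploit the same blow-up of the barrier at $r=\delta_1$, and both keep $A_1$ independent of the auxiliary parameters.

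Your write-up has one small gap that the paper's version avoids. You assert $u\in L^\infty_{\mathrm{loc}}(\overline\Omega_\delta\times[0,T))$ and hence that $u$ is bounded on $\{r\le\delta_1-\epsilon\}\times[0,T']$, but the definition of solution only gives $u\in L^\infty_{\mathrm{loc}}(\overline\Omega_\delta\times(0,T))$, i.e.\ no uniform bound down to $t=0$. The fix is exactly what the paper does: run the comparison on $[t_1,t_2]$, obtaining
\[
\int_{B_{\delta_1}(a_i)\setminus B_\delta(a_i)}(u-\phi_{A_1})_+(x,t)\,dx
\le\int_{B_{\delta_1}(a_i)\setminus B_\delta(a_i)}(u(x,t_1)-\phi_{A_1}(x-a_i,t_1))_+\,dx,
\]
and then send $t_1\to 0$ using $u(\cdot,t_1)\to u_0$ in $L^1$ together with $u_0\le\phi_{A_1}(\cdot,0)$. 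With that adjustment your Dirichlet variant goes through.
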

\begin{proof}
We will use a modification of the proof of Lemma 1.3 of \cite{Hs2} to prove the lemma. Without loss of generality it suffices to prove \eqref{u-upper-bd-phi-tidle} for $i=i_0=1$. Let
\begin{equation}\label{eq-condition-of-A-1-by-max-1}
A_1=\max\left\{C_2,\left(\frac{m(mq^2+2q+2n+4)}{1-m}\right)^{\frac{1}{1-m}},\,\,\left(\frac{2(\sup g_1)_+}{mq}\right)^{\frac{1}{m}}\right\}
\end{equation}
and let $\delta'\in(\delta_2,\delta_1)$. For any $0<t_1<t_2<T$, let
\begin{equation}\label{M-defn}
M=\max_{\substack{\delta'\leq |x-a_1|\leq\delta_1\\t_1\leq t\leq t_2}}\frac{m|\nabla u|}{u^{1-m}}.
\end{equation}
Since
\begin{equation*}
\frac{\partial \phi_{A_1}^{m}}{\partial r}(x-a_1,t)=\frac{mA_1^m(1+t)^{\frac{m}{1-m}}}{r^{mq}(\delta_1-r)^{\frac{2m}{1-m}}}\left(-\frac{q}{r}+\frac{2}{(1-m)(\delta_1-r)}\right)\to\infty
\end{equation*}
uniformly on $t\in [0,\infty]$ as $r=|x-a_1|\to\delta_1^-$,
there exists a constant $\delta''\in(\delta',\delta_1)$ such that
\begin{equation}\label{eq-condition-of-tilde-phi-on-boundary-1}
\frac{\partial \phi_{A_1}^{m}}{\partial r}(x-a_1,t)>M\quad\forall\delta''\leq |x-a_1|\leq \delta_1,t\geq 0.
\end{equation}
Since $q/\delta\ge 4/[(1-m)(\delta_1-\delta)]$, by direct computation,
\begin{align}\label{eq-condition-of-tilde-phi-on-boundary-2}
\frac{\partial \phi_{A_1}^{m}}{\partial \nu}(x-a_1,t)&=\frac{mA_1^m(1+t)^{\frac{m}{1-m}}}{{\delta}^{mq}(\delta_1-\delta)^{\frac{2m}{1-m}}}\left[\frac{q}{\delta}-\frac{2}{(1-m)(\delta_1-\delta)}\right]\notag\\
&\geq \frac{mA_1^mq}{2{\delta}^{mq+1}(\delta_1-\delta)^{\frac{2m}{1-m}}}\notag\\
&\geq \frac{mA_1^mq}{2{\delta}^{mq+1}}\geq \frac{g_1}{{\delta}^{mq+1}} \qquad \quad \mbox{on }\partial B_{\delta}(a_1)\times(0,\infty).
\end{align}
By \eqref{eq-condition-of-A-1-by-max-1},
\begin{equation}\label{eq-condition-of-tilde-phi-in-interior-1}
\begin{aligned}
\La\phi^m_{A_1}
=&A_1^m(1+t)^{\frac{m}{1-m}}\left[\frac{mq(mq-n+2)}{|x|^{mq+2}(\delta_1-|x|)^{\frac{2m}{1-m}}}-\frac{2m(2mq-n+1)}{(1-m)|x|^{mq+1}(\delta_1-|x|)^{\frac{1+m}{1-m}}}+\frac{2m(1+m)}{(1-m)^2|x|^{mq}(\delta_1-|x|)^{\frac{2}{1-m}}}\right]\\
\le&\frac{mA_1^m(mq^2+2q+2n+4)(1+t)^{\frac{m}{1-m}}}{(1-m)^2|x|^{mq+2}(\delta_1-|x|)^{\frac{2}{1-m}}}\\
\le&\phi_{A_1,t}\qquad\qquad\qquad\qquad\qquad\qquad \forall 0<|x|<\delta_1, t\geq 0.
\end{aligned}
\end{equation}
Hence by \eqref{eq-condition-of-tilde-phi-on-boundary-1}, \eqref{eq-condition-of-tilde-phi-on-boundary-2} and \eqref{eq-condition-of-tilde-phi-in-interior-1} for any $\delta_1'\in (\delta'',\delta_1)$, $\phi_{A_1}(x-a_1,t)$ is a supersolution of
\begin{equation}\label{w-eqn}
\begin{cases}
w_t=\La w^m\qquad\quad \mbox{ in }(B_{\delta_1'}(a_1)\bs B_{\delta}(a_1))\times(0,T)\\
\frac{\partial w^m}{\partial\nu}=M \qquad \quad\,\, \mbox{ on }\partial B_{\delta_1'}(a_1)\times(0,T)\\
\frac{\partial w^m}{\partial\nu}=\frac{g_1}{\delta^{mq+1}}\qquad\,\,\mbox{ on }\partial B_{\delta}(a_1)\times(0,T)\\
w(x,0)=u_0(x)\quad\mbox{ in }B_{\delta_1'}(a_1)\bs B_{\delta}(a_1)
\end{cases}
\end{equation}
where $\1/\1\nu$ is the derivative with respect to the unit outward normal at the boundary of the domain $B_{\delta_1'}(a_1)\bs B_{\delta}(a_1)$.
Since $u$ is a subsolution of \eqref{w-eqn}, by Lemma \ref{comparsion-lemma1}, $\forall 0<t_1<t_2<T$, $\delta_1'\in (\delta'',\delta_1)$,
\begin{align*}
&\int_{B_{\delta_1'}(a_1)\bs B_{\delta}(a_1)}(u(x,t)-\phi_{A_1}(x-a_1,t))_+\,dx\leq \int_{B_{\delta_1'}(a_1)\bs B_{\delta}(a_1)}(u(x,t_1)-\phi_{A_1}(x-a_1,t_1))_+\,dx\quad\forall t_1\le t\le t_2\\
\Rightarrow\quad&\int_{B_{\delta_1}(a_1)\bs B_{\delta}(a_1)}(u(x,t)-\phi_{A_1}(x-a_1,t))_+\,dx\leq \int_{B_{\delta_1}(a_1)\bs B_{\delta}(a_1)}(u(x,t_1)-\phi_{A_1}(x-a_1,t_1))_+\,dx\quad\mbox{ as }
\delta_1'\to\delta_1\\
\Rightarrow\quad&\int_{B_{\delta_1}(a_1)\bs B_{\delta}(a_1)}(u(x,t)-\phi_{A_1}(x-a_1,t))_+\,dx\le 0\qquad\forall 0<t<T\qquad\qquad\qquad\mbox{ as }t_1\to 0, 
t_2\to T
\end{align*}
and \eqref{u-upper-bd-phi-tidle} follows.\\
\indent Suppose now $\Omega$ is a smooth convex domain and \eqref{u0-L-infty-bd-condition} holds for some constant $M_0>0$. Let $n(x)$, $n_1(x)$, $\theta(x)$ and $c_0$ be as in Lemma \ref{positive-angle-at-bdary-lemma} with $x_0=a_1$. By \eqref{u-upper-bd-phi-tidle} there exists a constant $M>0$ such that
\begin{equation*}
u(x,t)\le M\qquad \forall (x,t)\in\left(\bigcup_{i=1}^{i_0} \partial B_{\delta_2}(a_i)\right)\times(0,T)
\end{equation*}
holds for all $0<\delta\le\delta_2$. Let
\begin{equation*}
w(x,t)=A_2(1+t)^{\frac{1}{1-m}}e^{\frac{|x-a_1|}{\delta_2}}
\end{equation*}
where
\begin{equation*}
A_2=\max\left\{\left(\frac{m(1-m)(m+n-1)}{\delta_2^2}\right)^{\frac{1}{1-m}},\left(\frac{\delta_2}{mc_0}(sup f)_+\right)^{\frac{1}{m}},\,\,M,\,\,\|u_0\|_{L^{\infty}(\Omega_{\delta_2})}\right\}.
\end{equation*}
Then
\begin{equation*}
w(x,0)\ge u_0(x)\quad\mbox{ in }\Omega_{\delta_2}
\end{equation*}
and 
\begin{equation*}
w(x,t)\ge u(x,t)\quad\mbox{on }\left(\bigcup_{i=1}^{i_0}\partial B_{\delta_2}(a_i)\right)\times(0,T).
\end{equation*}
By direct computation,
\begin{equation*}
\begin{aligned}
\La w^m=&\frac{mA_2^m}{\delta_2}\left[\frac{m}{\delta_2}+\frac{n-1}{|x-a_1|}\right](1+t)^{\frac{m}{1-m}}e^{\frac{m|x-a_1|}{\delta_2}}
\le\frac{mA_2^m(m+n-1)}{\delta_2^2}(1+t)^{\frac{m}{1-m}}e^{\frac{m|x-a_1|}{\delta_2}}\\
\le&\frac{A_2}{1-m}(1+t)^{\frac{m}{1-m}}e^{\frac{|x-a_1|}{\delta_2}}=w_t, \qquad \mbox{in }\Omega_{\delta_2}\times(0,T).
\end{aligned}
\end{equation*}
Moreover by Lemma \ref{positive-angle-at-bdary-lemma},
\begin{equation*}
\frac{\partial w^m}{\partial\nu}(x,t)=\frac{\partial w^m}{\partial n_1}(x,t)\cdot\cos\theta(x)
=\frac{mA_2^{m}}{\delta_2}\cos\theta(x)(1+t)^{\frac{m}{1-m}}e^{\frac{m|x-a_1|}{\delta_2}}\ge\frac{mA_2^mc_0}{\delta_2}
\ge f(x,t)
\end{equation*}
for all $x\in\partial\Omega$, $0<t<T$.
Hence by Lemma \ref{comparsion-lemma1},
\begin{equation*}
u(x,t)\leq w(x,t)\leq A_2(1+T)e^{M_2}\quad\mbox{ in }\overline{\Omega}_{\delta_2}\times[0,T)
\end{equation*}
where $M_2=\max_{x\in\overline{\Omega}_{\delta_2}}\left(\frac{|x-a_1|}{\delta_2}\right)$ and \eqref{u-upper-bd1} follows.
\end{proof}

\begin{lemma}\label{comparison-lem2}
Let $n\ge 1$, $0<m<1$, $0<\delta<\delta_1<\min (\delta_0, m/2)$, $0\le f\in L_{loc}^{\infty}(\1\Omega\times [0,\infty))$ and $g_i\in L_{loc}^{\infty}([0,\infty))$, $i=1,\cdots, i_0$, be such that 
\begin{equation}\label{gi-lower-bd-condition}
\inf_{[0,\infty)} g_i(t)>m(q+4\delta_1^{-2})\quad\forall i=1,\cdots, i_0.
\end{equation} 
Let $0\le u_0\in L^1(\Omega_{\delta})$ be such that 
\begin{equation}\label{u0-lower-blow-up-condition}
u_0(x)\ge\frac{C_1}{|x-a_i|^qe^{\frac{1}{\delta_1^2-|x-a_i|^2}}}\quad \forall \delta\le |x-a_i|<\delta_1, i=1,\cdots,i_0
\end{equation} 
for some constants $C_1>0$  and $q\ge\max\left(\frac{n}{2m},\frac{n-2}{m}\right)$ and let $u$ be a solution of \eqref{Neumann-problem-holes}. Then 
\begin{equation}\label{singular-soln-lower-bd}
u(x,t)\ge\frac{C_1}{|x-a_i|^qe^{\frac{1}{\delta_1^2-|x-a_i|^2}}}\quad\forall \delta\le |x|<\delta_1,t>0, i=1,\cdots,i_0
\end{equation}
holds for any $0<\delta\le\delta_1/2$.
\end{lemma}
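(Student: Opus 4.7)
The idea is to build an explicit time-independent subsolution with the prescribed singular profile on each annular shell $B_{\delta_1}(a_i)\setminus B_{\delta}(a_i)$ and then apply the $L^{1}$-comparison of Lemma \ref{comparsion-lemma1}. By translation invariance it is enough to handle one singularity at a time, so fix $i$ and set $a_i=0$. Take as barrier
\[
\psi(x)=\frac{C_1}{|x|^{q}\,e^{1/(\delta_1^2-|x|^2)}},\qquad 0<|x|<\delta_1,
\]
which is stationary and vanishes as $|x|\to\delta_1^{-}$ thanks to the exponential weight.

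The heart of the argument is to verify that $\psi$ is a subsolution of $u_t=\Delta u^m$ in the annulus. Since $\psi_t\equiv 0$, this reduces to $\Delta \psi^{m}\geq 0$. A radial Laplacian computation with $r=|x|$ and $\psi^m=C_1^{m}r^{-mq}e^{-m/(\delta_1^2-r^2)}$ yields
\[
\frac{\Delta \psi^m}{\psi^m}=\frac{mq(mq-n+2)}{r^2}+\frac{2m(2mq-n)}{(\delta_1^2-r^2)^2}+\frac{4mr^{2}\bigl[m-2(\delta_1^2-r^2)\bigr]}{(\delta_1^2-r^2)^4}.
\]
The first summand is nonnegative because $q\geq (n-2)/m$, the second because $q\geq n/(2m)$, and the third because $\delta_1<m/2$ gives $\delta_1^{2}-r^{2}<\delta_1^{2}<m/2$, so that $m-2(\delta_1^2-r^2)>0$. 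The exponential weight in the profile is engineered precisely so that this mixed term can be absorbed by the smallness restriction on $\delta_1$.

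For the boundary data, a direct differentiation shows that on $\partial B_\delta$ (whose outward normal relative to the annulus points toward the origin),
\[
\frac{\partial \psi^m}{\partial\nu}\bigg|_{r=\delta}=\frac{C_1^{m}\,e^{-m/(\delta_1^2-\delta^2)}}{\delta^{mq+1}}\Bigl[mq+\frac{2m\delta^{2}}{(\delta_1^2-\delta^2)^2}\Bigr].
\]
Using $\delta\leq\delta_1/2$ one has $2\delta^{2}/(\delta_1^2-\delta^2)^{2}\leq 4\delta_1^{-2}$ and $e^{-m/(\delta_1^2-\delta^2)}\leq 1$, so the bracketed factor is at most $m(q+4\delta_1^{-2})$; hypothesis \eqref{gi-lower-bd-condition} then forces $\partial\psi^m/\partial\nu\leq g_i(t)/\delta^{mq+1}$ (shrinking $C_1$ if necessary so that $C_1^{m}\leq 1$, which only strengthens the lower bound \eqref{singular-soln-lower-bd} that we are trying to prove). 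On the interior face $\partial B_{\delta_1}(a_i)$ we have $\psi\to 0\leq u$, and \eqref{u0-lower-blow-up-condition} supplies $\psi\leq u_0$ in the annulus at $t=0$.

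With a stationary subsolution, a supersolution, dominated boundary fluxes and dominated initial/Dirichlet data in hand, Lemma \ref{comparsion-lemma1} applied on $\Omega_1=B_{\delta_1}(a_i)\setminus B_{\delta}(a_i)$ with $\Sigma_1=\partial B_\delta(a_i)$ (Neumann) and $\Sigma_2=\partial B_{\delta_1}(a_i)$ (Dirichlet, with value $0$ for $\psi$ and $u|_{\partial B_{\delta_1}(a_i)}$ for $u$) gives $(\psi-u)_+\equiv 0$, i.e.~\eqref{singular-soln-lower-bd}. Summing over $i=1,\dots,i_0$ finishes the proof. The main obstacle is the Laplacian calculation and the simultaneous nonnegativity of its three summands, which is precisely what dictates the lower bounds $q\geq \max(n/(2m),(n-2)/m)$ and the smallness condition $\delta_1<m/2$ appearing in the statement.
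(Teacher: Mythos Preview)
Your approach is essentially identical to the paper's: both construct the stationary radial barrier $C_1|x|^{-q}e^{-1/(\delta_1^2-|x|^2)}$ on the annulus $B_{\delta_1}\setminus B_\delta$, verify $\Delta(\cdot)^m\ge 0$ via the same three-term decomposition (the three summands being controlled respectively by $q\ge(n-2)/m$, $q\ge n/(2m)$, and $\delta_1<m/2$), check the Neumann flux at $|x|=\delta$ against $g_i/\delta^{mq+1}$, and invoke Lemma~\ref{comparsion-lemma1} with zero Dirichlet data at $|x|=\delta_1$.

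One sentence needs repair. You write ``shrinking $C_1$ if necessary so that $C_1^{m}\le 1$, which only strengthens the lower bound''. This is backwards: replacing $C_1$ by a smaller constant \emph{weakens} the conclusion \eqref{singular-soln-lower-bd}, and the lemma asserts the bound with the \emph{same} $C_1$ that appears in the hypothesis \eqref{u0-lower-blow-up-condition}, so you are not free to shrink it. (The paper's own proof glosses over this point as well --- it writes the barrier as $|x|^{-q}e^{-1/(\delta_1^2-|x|^2)}$ with no prefactor and still claims \eqref{singular-soln-lower-bd}.) The honest route is to keep $C_1$ in the barrier and observe that the Neumann flux then carries the extra factor $C_1^{m}e^{-m/(\delta_1^2-\delta^2)}$; since $\delta_1<m/2$ forces $e^{-m/(\delta_1^2-\delta^2)}\le e^{-m/\delta_1^2}<e^{-4/m}$, there is room provided $C_1$ is not astronomically large, and in the only place the lemma is used (the proof of Theorem~\ref{singular-soln-existence-thm}) the Neumann data are taken to be $2m(q+4\delta_1^{-2})$, which already gives a factor-of-two margin over \eqref{gi-lower-bd-condition}.
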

\begin{proof}
Without loss of generality it suffices to prove the lemma when $i_0=1$ and $a_1=0$. Let
\begin{equation*}
\phi(x)=|x|^{-q}\psi(x), \qquad\psi(x)=e^{-\frac{1}{\delta_1^2-|x|^2}}.
\end{equation*}
By direct computation,
\begin{equation*}
\Delta |x|^{-mq}=mq(mq+2-n)|x|^{-mq-2}\ge 0,
\end{equation*}
\begin{equation*}
\Delta \psi^m=\frac{2m\psi^m}{(\delta_1^2-|x|^2)^4}\left(2m|x|^2-4|x|^2(\delta_1^2-|x|^2)-n(\delta_1^2-|x|^2)^2\right)
\end{equation*}
Hence
\begin{align}\label{phi^m-subharmonic}
\Delta\phi^m=&\psi^m\Delta |x|^{-mq}+2\nabla\psi^m\cdot\nabla |x|^{-mq}+|x|^{-mq}\Delta\psi^m\notag\\
\ge&\frac{4m^2q|x|^{-mq}\psi^m}{(\delta_1^2-|x|^2)^2}+|x|^{-mq}\Delta\psi^m\notag\\
\ge&\frac{2m|x|^{-mq}\psi^m}{(\delta_1^2-|x|^2)^4}\left((2m-4\delta_1^2)|x|^2+(2mq-n)(\delta_1^2-|x|^2)^2\right)\notag\\
\ge&0\quad\mbox{ in }B_{\delta_1}.
\end{align}
By \eqref{gi-lower-bd-condition},
\begin{equation}\label{phi-derivative-near-zero}
\left.\frac{\1\phi^m}{\1\nu}\right|_{|x|=\delta}=\frac{m\psi(\delta)^m}{\delta^{qm+1}}\left(q+\frac{2}{(\delta_1^2-\delta^2)^2}\right)
\le\frac{g_i}{\delta^{qm+1}}\quad\forall0<\delta<\frac{\delta_1}{2}.
\end{equation}
By \eqref{phi^m-subharmonic} and \eqref{gi-lower-bd-condition} for any $0<\delta<\frac{\delta_1}{2}$, $\phi$ is a subsolution of 
\begin{equation}\label{dirichlet-neumann-problem}
\left\{\begin{aligned}
u_t=&\La u^m\qquad \mbox{ in }B_{\delta_1}\times(0,\infty)\\
\frac{\partial u^m}{\partial\nu}=&\frac{g_i}{\delta^{mq+1}}\quad\mbox{ on }B_{\delta}\times(0,\infty)\\
u=&0\qquad\quad\, \mbox{ on }\partial B_{\delta_1}\times(0,\infty)\\
u(x,0)=&u_0(x)\qquad\mbox{in }B_{\delta_1}\setminus B_{\delta}.
\end{aligned}\right.
\end{equation}
Since $u$ is a supersolution of \eqref{dirichlet-neumann-problem}, by Lemma \ref{comparsion-lemma1} \eqref{singular-soln-lower-bd} follows.
\end{proof}

We will now prove a $L^p-L^{\infty}$ estimates for the solution of \eqref{Neumann-problem-holes}.
Since the proof is similar to the proof in section 1 of \cite{Hu1}, we will only sketch the argument here. For any smooth bounded domain 
$\Omega\subset\R^n$, let $x_0\in\partial\Omega$. When $n\ge 2$, by rotation and translation of the coordinate axis we may assume that $x_0$ is at the origin and the tangent plane to $\partial\Omega$ at $x_0$ is $\R^{n-1}\times\{0\}$ and there exists a constant $0<R_0\le\delta_0/2$ and a smooth function
\begin{equation*}
\phi_1:\R^{n-1}\to \R
\end{equation*}
with $\phi_1\in C_0^{\infty}(\R^{n-1})$, $\phi_1(0)=0$, $\nabla\phi_1(0)=0$, and
$\sup_{x'\in \R^{n-1}}|\nabla\phi_1(x')|\le\frac{1}{10}$ such that
\begin{equation*}
\begin{cases}
\begin{aligned}
\Omega\cap B_{R_0}=&\left\{(x',x_n):x'\in\R^{N-1}, x_n>\phi_1(x')\right\}\cap B_{R_0}\\
\partial\Omega\cap B_{R_0}=&\left\{(x',x_n):x'\in\R^{N-1}, x_n=\phi_1(x')\right\}\cap B_{R_0}.
\end{aligned}
\end{cases}
\end{equation*}
Then $(\cup_{i=1}^{i_0}B_{\delta_0/2}(a_i))\cap B_{R_0}(x_0)=\phi$. For any $x=(x',x_n)\in \Omega\cap B_{R_0}$ 
with $x'\in\R^{n-1}$, $x_n\in\R$, let
\begin{equation}\label{domain-transformation}
\psi_1(x)=(x',x_n-\phi_1(x'))
\end{equation}
and $\Omega'_1=\psi_1(\Omega\cap B_{R_0})$. Then $\psi_1$ is a differeomorphism between $\Omega\cap B_{R_0}$ and
$\Omega'_1$. Let
\begin{equation*}
\tilde{u}(y,t)=u(y',y_n+\phi_1(y'),t) \qquad \forall (y',y_n)\in\Omega'_1.
\end{equation*}
For $0<r<R_0/5$, let
\begin{equation*}
D_r'=\left\{\begin{aligned}
&\{(y',y_n)\in\R^{n-1}\times\R:|y'|<r, 0<y_n<r\}\quad\mbox{ if }n\ge 2\\
&\{y\in\R:|y-x_0|<r\}\qquad\qquad\qquad\qquad\qquad\mbox{if }n=1
\end{aligned}\right.
\end{equation*}
and $D_r=\psi_1^{-1}(D_r')$  where $\psi_1$ is given by \eqref{domain-transformation} if $n\ge 2$ and $\psi_1$ is the identity map if $n=1$.
For $0<r<\rho<R_0/5$ and $0<t_1<t_2<t_0<T$, $t_2<t<T$, we let $S(t)=\psi_1^{-1}(D_r')\times(t_2,t]$, $R(t)=\psi_1^{-1}(D_{\rho}')\times(t_1,t]$,  $S=S(t_0)$ and $R=R(t_0)$.

\begin{lemma}\label{weak-w12-soln-lem}
Let $0<\delta<\delta_0$, $f\in L^1(\partial\Omega\times(0,T))$, and $g_i\in L^1(\partial B_{\delta}(a_i)\times(0,T))$,
$i=1,2,\dots,i_0$. Suppose $u\in C^{2,1}(\2{\Omega}_{\delta}\times (0,T))$ is a solution of \eqref{Neumann-holes-eqn} in $\Omega_{\delta}\times (0,T)$. For any $a>0$, let $v=\max (u,a)$. Then
\begin{align}\label{weak-w12-soln}
&\int_{\Omega_{\delta}}v(x,t_2)\phi (x,t_2)\,dx+\int_{t_1}^{t_2}\int_{\Omega}\nabla v^m\cdot\nabla\phi \,dx\,dt\notag\\
\le&\int_{\Omega_{\delta}}v(x,t_1)\phi (x,t_1)\,dx+\int_{t_1}^{t_2}\int_{\Omega}v\phi_t\,dx\,dt+\int_{t_1}^{t_2}\int_{\1\Omega}f\phi\,d\sigma\,dt+\int_{t_1}^{t_2}\int_{\cup_{i=0}^{i_0}\1 B_{\delta}(a_i)}g_i\phi\,d\sigma\,dt\notag\\
\end{align}
holds for any $0<t_1<t_2<T$ and $\phi\in C(\2{\Omega}_{\delta}\times (0,T))$ such that the weak derivatives $\phi_t,\nabla\phi$, exist and belong to $L^2(\Omega_{\delta}\times (0,T))$ and
\begin{equation}\label{weak-w12-soln2}
\int_{\Omega_{\delta}}v_t\phi\,dx+\int_{\Omega}\nabla v^m\cdot\nabla \phi\,dx
\le\int_{\1\Omega}f\phi\,d\sigma+\int_{\cup_{i=0}^{i_0}\1 B_{\delta}(a_i)}g_i\phi\,d\sigma
\end{equation}
holds for any $0<t<T$ and $\phi\in C(\2{\Omega}_{\delta}\times (0,T))$ such that for any $0<t'<T$ the weak derivatives $\nabla\phi (x,t')$ exists and belong to $L^2(\Omega_{\delta})$.
\end{lemma}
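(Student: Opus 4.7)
The plan is a classical convex-truncation (Kato-type) argument: approximate $\max(\cdot,a)$ by a smooth convex function $\beta_\varepsilon$, test $u_t=\Delta u^m$ against $\beta_\varepsilon'(u)\phi$, integrate by parts in space, discard a favorably signed convexity term, and pass to the limit $\varepsilon\to 0$. The one-sided direction of \eqref{weak-w12-soln} and \eqref{weak-w12-soln2} forces the test function $\phi$ to be taken nonnegative throughout.

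For each $\varepsilon>0$, fix $\beta_\varepsilon\in C^2(\R)$ convex and nondecreasing, with $\beta_\varepsilon(s)=a$ for $s\le a$ and $\beta_\varepsilon(s)=s$ for $s\ge a+\varepsilon$, so that $0\le\beta_\varepsilon'\le 1$, $\beta_\varepsilon\to\max(\cdot,a)$ uniformly, and $\beta_\varepsilon'(s)\to\chi_{\{s>a\}}$ pointwise. Set $v_\varepsilon:=\beta_\varepsilon(u)\in C^{2,1}(\overline{\Omega}_\delta\times(0,T))$; by the chain rule $(v_\varepsilon)_t=\beta_\varepsilon'(u)\,\Delta u^m$. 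Multiplying by a nonnegative admissible test function $\phi$, applying Green's identity on $\Omega_\delta$, using the Neumann data $\partial u^m/\partial\nu=f$ on $\partial\Omega$ and $=g_i$ on $\partial B_\delta(a_i)$, and expanding $\nabla(\beta_\varepsilon'(u)\phi)=\beta_\varepsilon''(u)\phi\nabla u+\beta_\varepsilon'(u)\nabla\phi$ yields
\begin{align*}
&\int_{\Omega_\delta}(v_\varepsilon)_t\phi\,dx+\int_{\Omega_\delta}\beta_\varepsilon'(u)\nabla u^m\cdot\nabla\phi\,dx+m\int_{\Omega_\delta}\beta_\varepsilon''(u)u^{m-1}|\nabla u|^2\phi\,dx\\
&\qquad=\int_{\partial\Omega}\beta_\varepsilon'(u)f\phi\,d\sigma+\sum_{i=1}^{i_0}\int_{\partial B_\delta(a_i)}\beta_\varepsilon'(u)g_i\phi\,d\sigma.
\end{align*}
Since $\beta_\varepsilon''\ge 0$, $u>0$ (classical positive solution), and $\phi\ge 0$, the third term on the left is nonnegative; dropping it and bounding $\beta_\varepsilon'\le 1$ against the nonnegative $f\phi$ and $g_i\phi$ produces the $\varepsilon$-level analogue of \eqref{weak-w12-soln2}.

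To finish, pass $\varepsilon\to 0$. The Stampacchia-type identity $\nabla u^m=0$ a.e.\ on the level set $\{u=a\}$ (valid for any $C^1$, hence $W^{1,p}_{\mathrm{loc}}$, function) implies $\nabla v^m=\chi_{\{u>a\}}\nabla u^m$ a.e.; combined with the pointwise limit $\beta_\varepsilon'(u)\to\chi_{\{u>a\}}$ and the local boundedness of $u_t$ and $|\nabla u^m|$ inherited from $u\in C^{2,1}(\overline{\Omega}_\delta\times(0,T))$, Lebesgue dominated convergence (with dominants $|u_t|\phi$ and $|\nabla u^m||\nabla\phi|$) delivers \eqref{weak-w12-soln2}. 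To obtain \eqref{weak-w12-soln}, I integrate the $\varepsilon$-level identity in $t$ over $(t_1,t_2)$ against a time-dependent $\phi$, apply the product rule $(v_\varepsilon\phi)_t=(v_\varepsilon)_t\phi+v_\varepsilon\phi_t$ (legitimate because $v_\varepsilon$ is $C^1$ in $t$), and then send $\varepsilon\to 0$, using the uniform bound $|v_\varepsilon-v|\le\varepsilon$ for the endpoint terms. The single delicate step is the convergence $\beta_\varepsilon'(u)\nabla u^m\to\nabla v^m$ in the gradient integral, and that is exactly where the Stampacchia identity on $\{u=a\}$ is required; everything else is routine dominated convergence.
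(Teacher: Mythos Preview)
Your argument is correct and is essentially the same Kato-type truncation as the paper's: your $\beta_\varepsilon'$ plays the role of the paper's cutoff $p_k$ (with $p_k\to\chi_{(a,\infty)}$), your primitive $\beta_\varepsilon$ corresponds to $q_k(s)=\int_a^s p_k$, and the sign you discard via $\beta_\varepsilon''\ge 0$ is exactly the paper's use of $p_k'\ge 0$. Your explicit remark that the inequality requires $\phi\ge 0$ is a useful clarification the paper leaves implicit (and indeed every later application uses $\phi=v^{\alpha}\eta^2\ge 0$), as is your invocation of the Stampacchia identity to justify $\beta_\varepsilon'(u)\nabla u^m\to\nabla v^m$ on the level set $\{u=a\}$; the paper simply writes ``letting $k\to\infty$'' at that step.
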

\begin{proof}
By approximation it suffices to show that \eqref{weak-w12-soln} holds for any $\phi\in C^{\infty}(\2{\Omega}_{\delta}\times (0,T))$. Since the proof of the lemma is similar to the proof of Lemma 1.1 of \cite{Hu1} we will only sketch the argument here.
We choose a sequence of functions $\{p_k\}_{k=1}^{\infty}\subset C^{\infty}(\R)$, $0\le p_k\le 1$, $p_k(s)=0$ for 
all $s\le a+(2k)^{-1}$, $p_k(s)=1$ for all $s\ge a+k^{-1}$, $0\le p_k'(s)\le Ck$ for some constant $C>0$, and $p_k(s)\to\chi_{(a,\infty)}(s)$ as $k\to\infty$. Multiplying 
\eqref{fast-diffusion-eqn} by $p_k(u)\phi$ and integrating by parts,
\begin{align}\label{weak-soln-k-approx-eqn}
&\int_{\Omega}q_k(u(x,t_2))\phi(x,t_2)\,dx+\int_{t_1}^{t_2}\int_{\Omega}\nabla u^m\cdot\nabla \phi\,dx\,dt\notag\\
\le&\int_{\Omega}q_k(u(x,t_2))\phi(x,t_2)\,dx+\int_{t_1}^{t_2}\int_{\Omega}\nabla u^m\cdot\nabla (p_k(u)\phi)\,dx\,dt\notag\\
\le&\int_{\Omega}q_k(u(x,t_1))\phi(x,t_1)\,dx+\int_{t_1}^{t_2}\int_{\Omega}q_k(u)\phi_t\,dx\,dt+\int_0^T\int_{\1\Omega}f\phi\,d\sigma\,dt+\int_{t_1}^{t_2}\int_{\cup_{i=1}^{i_0}\1 B_{\delta}(a_i)}g_i\phi\,d\sigma\,dt
\end{align}
where
$$
q_k(s)=\int_a^sp_k(\hat{s})\,d\hat{s}
$$
Since $q_k(u)\to v-a$ as $k\to\infty$, letting $k\to\infty$ in \eqref{weak-soln-k-approx-eqn} we get \eqref{weak-w12-soln}. By a similar argument we get \eqref{weak-w12-soln2} and the lemma follows.
\end{proof}

\begin{lemma}\label{lem-the-first-lemma-for-moser-iteration-i3}
Let $0<\delta<\delta_0$, $f\in L^1(\partial\Omega\times(0,T))$, and $g_i\in L^1(\partial B_{\delta}(a_i)\times(0,T))$,
$i=1,2,\dots,i_0$, be such that $f\leq M$ for some constant $M>0$. Suppose $u\in C^{2,1}(\2{\Omega}_{\delta}\times (0,T))$ is a solution of \eqref{Neumann-holes-eqn} in $\Omega_{\delta}\times (0,T)$. Let $v=\max (u,1)$. Then there exists a constant $C>0$ such that
\begin{equation}\label{step1-ineqn}
\sup_{t_2\le t\le t_0}\int_{D_r}v^{\alpha+1}(x,t)\,dx+\iint_{S}|\nabla v^{\frac{\alpha+m}{2}}|^2\,dx\,dt\leq C\{(\rho-r)^{-2}+(t_2-t_1)^{-1}\}(1+\alpha^2)\iint_{R}v^{\alpha+1}\,dx\,dt
\end{equation}
holds for any $\alpha>0$, $0<\rho<R_0/5$, and $0<m<1$.
\end{lemma}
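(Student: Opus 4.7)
\medskip
\noindent\textbf{Proof proposal.} The estimate is the standard first step of a Moser iteration, so the plan is to test the weak energy inequality \eqref{weak-w12-soln2} of Lemma~2.5 against a truncated power of $v$ localized by a space--time cutoff. Concretely, I would choose $\eta\in C^\infty(\overline{\Omega}_\delta\times[0,T))$ with $0\le\eta\le 1$, $\eta\equiv 1$ on $S$, $\supp\eta\subset R$, $\eta\equiv 0$ for $t\le t_1$ or on $\1 D_\rho\setminus(\1\Omega\cup\bigcup_i\1 B_\delta(a_i))$, and with $|\D\eta|\le C(\rho-r)^{-1}$, $|\eta_t|\le C(t_2-t_1)^{-1}$. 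Then take $\phi=\eta^{2}v^{\alpha}$, which is admissible because $v=\max(u,1)\ge 1$ and $u\in C^{2,1}$, and integrate \eqref{weak-w12-soln2} over $(t_1,t]$ for $t_2\le t\le t_0$.

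For the time term, since $v=\max(u,1)$ one has $v_t\,v^{\alpha}=\frac{1}{\alpha+1}\1_t(v^{\alpha+1})$ a.e., so after an integration by parts in $t$ the contribution is
\[
\frac{1}{\alpha+1}\int_{D_\rho}\eta^{2}v^{\alpha+1}(\cdot,t)\,dx-\frac{2}{\alpha+1}\iint_{R(t)}\eta\eta_t v^{\alpha+1}\,dx\,ds,
\]
and the second piece is bounded by $C(t_2-t_1)^{-1}\iint_R v^{\alpha+1}$. For the diffusion term I would expand
\[
\D v^{m}\!\cdot\!\D(\eta^{2}v^{\alpha})=\tfrac{4m\alpha}{(\alpha+m)^{2}}\,\eta^{2}|\D v^{(\alpha+m)/2}|^{2}+\tfrac{4m}{\alpha+m}\,\eta v^{(\alpha+m)/2}\,\D v^{(\alpha+m)/2}\!\cdot\!\D\eta,
\]
then apply Young's inequality to the cross term to absorb half of the gradient into the left side, leaving a remainder controlled by $C(\rho-r)^{-2}\iint_R v^{\alpha+m}\le C(\rho-r)^{-2}\iint_R v^{\alpha+1}$ (using $v\ge 1$ and $m<1$). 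Combining these two pieces and taking the supremum in $t\in[t_2,t_0]$ produces the desired left-hand side of \eqref{step1-ineqn}, modulo the two boundary integrals on the right of \eqref{weak-w12-soln2}.

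The delicate point is the boundary term $\int_{\1\Omega}f\phi\,d\sigma\le M\int_{\1\Omega\cap D_\rho}\eta^{2}v^{\alpha}\,d\sigma$; the $g_i$--term drops out since $R_0\le\delta_0/2$ forces $D_\rho\cap\bigcup_i\1 B_\delta(a_i)=\emptyset$. To handle this I would straighten the boundary via $\psi_1$ in \eqref{domain-transformation}, so that $\1\Omega\cap D_\rho$ becomes $\{y_n=0\}\cap D_\rho'$; then for any smooth $w\ge 0$ supported in $D_\rho'$ one has
\[
\int_{\{y_n=0\}\cap D_\rho'}w\,dy'=-\int_{D_\rho'}\1_{y_n}w\,dy,
\]
which, applied to $w=\eta^{2}\tilde v^{\alpha}$ (the pulled-back $\eta^{2}v^{\alpha}$), produces two terms:
\[
\Bigl|\int_{\{y_n=0\}}\eta^{2}\tilde v^{\alpha}\,dy'\Bigr|\le 2\iint\eta|\1_{y_n}\eta|\tilde v^{\alpha}\,dy+\alpha\iint\eta^{2}\tilde v^{\alpha-1}|\1_{y_n}\tilde v|\,dy.
\]
Writing $v^{\alpha-1}\D v=\frac{2}{\alpha+m}v^{(\alpha-m)/2}\D v^{(\alpha+m)/2}$ and applying Young's inequality again, the first integral is bounded by $C(\rho-r)^{-1}\iint_R v^{\alpha+1}$ and the second by $\epsilon\iint_R\eta^{2}|\D v^{(\alpha+m)/2}|^{2}+C_\epsilon\alpha^{2}\iint_R v^{\alpha+1}$ (since $v^{\alpha-m}\le v^{\alpha+1}$). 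Choosing $\epsilon$ small enough to absorb into the gradient term on the left, one arrives at \eqref{step1-ineqn} with constant $C\{(\rho-r)^{-2}+(t_2-t_1)^{-1}\}(1+\alpha^{2})$.

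The main obstacle, as usual, is the trace/boundary step: one must convert the surface integral $\int_{\1\Omega\cap D_\rho}\eta^{2}v^{\alpha}\,d\sigma$ into bulk integrals that can be absorbed, and keep track of the $\alpha$--dependence so that one really ends up with $1+\alpha^{2}$ and not a worse power of $\alpha$. The boundary flattening together with the identity $v^{\alpha-1}\D v\propto v^{(\alpha-m)/2}\D v^{(\alpha+m)/2}$ is the precise device that produces exactly $\alpha^{2}$ rather than $\alpha^{2}/m$ or worse, provided the Young's inequality constants are chosen carefully so that absorption into the gradient term is valid uniformly in $\alpha>0$.
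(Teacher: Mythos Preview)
Your proposal is correct and follows essentially the same route as the paper: test \eqref{weak-w12-soln2} with $\phi=\eta^{2}v^{\alpha}$, expand $\nabla v^{m}\cdot\nabla(\eta^{2}v^{\alpha})$ and absorb the cross term by Young's inequality, then handle the surface integral on $\partial\Omega\cap D_{\rho}$ by flattening via $\psi_{1}$ and the fundamental theorem of calculus $\int_{\{y_{n}=0\}}w\,dy'\le\int|\partial_{y_{n}}w|\,dy$, again absorbing the resulting gradient piece into the left-hand side. The only cosmetic difference is that the paper first uses $v^{\alpha-1}\le v^{\alpha+m-1}$ before rewriting in terms of $\nabla v^{(\alpha+m)/2}$, whereas you rewrite directly and use $v^{\alpha-m}\le v^{\alpha+1}$ afterwards; both are equivalent since $v\ge 1$.
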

\begin{proof}
Since the proof is similar to that of Lemma 1.1 of \cite{Hu1}, we will only sketch the argument here.
We first choose $\eta\in C^{\infty}(\R^{n+1})$, $0\le\eta\le 1$, such that $\eta(x,s)=0$ for all $(x,s)\not\in R$, $\eta(x,s)=1$ for all $(x,s)\in S$, and $|\eta_t|\le C(t_2-t_1)^{-1}$, $|\nabla\eta|\le C(\rho-r)^{-1}$,
for some constant $C>0$. By Lemma \ref{weak-w12-soln-lem} the function $v=\max(u,1)$ satisfies \eqref{weak-w12-soln2}.
Since the function $v^{\alpha}\eta^2\in C(\2{\Omega}_{\delta}\times (0,T))$ and its first order weak derivatives belong to $L^2({\Omega}_{\delta}\times (0,T))$, putting $\phi=v^{\alpha}\eta^2$ in \eqref{weak-w12-soln2} and simplifying
\begin{equation}\label{eq-aligned-the-first-step-for-energy-inequality-34t}
\begin{aligned}
&\frac{1}{\alpha+1}\int_{D_r}v^{\alpha+1}(x,t)\,dx+\iint_{R(t)}\nabla (v^{m})\cdot
\nabla(v^{\alpha}\eta^2)\,dxds\\
\le&\frac{2}{\alpha+1}\iint_{R(t)}v^{\alpha+1}\eta\eta_t\,dxds+M\iint_{(\partial D_{\rho}\cap\partial\Omega)\times(t_1,t)}v^{\alpha}\eta^2\,d\sigma ds.
\end{aligned}
\end{equation}
Then
\begin{equation*}
\begin{aligned}
&\frac{1}{\alpha+1}\int_{D_r}v^{\alpha+1}(x,t)\,dx
+\frac{4\alpha m}{(\alpha+m)^2}\iint_{R(t)}\eta^2|\nabla v^{\frac{\alpha+m}{2}}|^2\,dxds\\
\le &\frac{2}{\alpha+1}\iint_{R(t)}v^{\alpha+1}\eta\eta_t\,dxds+2m\iint_{R(t)}v^{\alpha+m-1}\eta\left|\nabla v\right|\left|\nabla\eta\right|\,dxds
+M\iint_{(\partial D_{\rho}\cap\partial\Omega)\times(t_1,t_0)}v^{\alpha}\eta^2\,d\sigma ds\\
\le&\frac{2}{\alpha+1}\iint_{R(t)}v^{\alpha+1}\eta\eta_t\,dxds+\frac{2m\alpha}{(\alpha+m)^2}
\iint_{R(t)}\eta^2|\nabla v^{\frac{\alpha+m}{2}}|^2\,dxds
+\frac{2m}{\alpha}\iint_{R(t)}v^{\alpha+m}|\nabla\eta|^2\,dxds\\
&\qquad  +M\iint_{(\partial D_{\rho}\cap\partial\Omega)\times(t_1,t)}v^{\alpha}\eta^2\,d\sigma ds.
\end{aligned}
\end{equation*}
Hence
\begin{equation}\label{ineqn-1}
\frac{\alpha}{\alpha+1}\int_{D_r}v^{\alpha+1}(x,t)\,dx
+\frac{2m\alpha^2}{(\alpha+m)^2}\iint_{R(t)}\eta^2|\nabla v^{\frac{\alpha+m}{2}}|^2\,dxds\\
\le 2\iint_{R(t)}v^{\alpha+1}(|\eta_t|+|\nabla\eta|^2)\,dxds
+ MI
\end{equation}
where
\begin{equation*}
I=\alpha\iint_{(\partial D_{\rho}\cap\partial\Omega)\times(t_1,t)}v^{\alpha}\eta^2
\,d\sigma ds.
\end{equation*}
By an argument similar to the proof of Lemma 1.1 of \cite{Hu1},
\begin{equation}\label{bd-estimate}
\begin{aligned}
I&\le\alpha\int_{t_1}^t\int_{\Omega}|\partial_{x_n}(v^{\alpha}\eta^2)|\,dxdt\\
&\le\alpha^2\iint_{R(t)}v^{\alpha-1}\eta^2|\nabla v|\,dxds
+2\alpha\iint_{R(t)}v^{\alpha}\eta|\nabla\eta|\,dxds\\
&\le\alpha^2\iint_{R(t)}v^{\alpha+m-1}\eta^2|\nabla v|\,dxds
+2\alpha\iint_{R(t)}v^{\alpha}\eta|\nabla\eta|\,dxds\\
&\le\frac{2\alpha^2}{\alpha+m}\iint_{R(t)}\eta^2v^{\frac{\alpha+m}{2}}|\nabla v^{\frac{\alpha+m}{2}}|\,dxds
+2\alpha\iint_{R(t)}v^{\alpha}\eta|\nabla\eta|\,dxds\\
&\le\frac{m\alpha^2}{(\alpha+m)^2(M+1)}\iint_{R}\eta^2\left|\nabla v^{\frac{\alpha+m}{2}}\right|^2\,dxds
+\alpha^2m^{-1}(M+1)\iint_{R} v^{\alpha+m}\eta^2\,dxds+2\alpha\iint_{R(t)}v^{\alpha}\eta|\nabla\eta|\,dxds.
\end{aligned}
\end{equation}
By \eqref{ineqn-1} and \eqref{bd-estimate} we get \eqref{step1-ineqn} and the lemma follows.
\end{proof}

By Lemma \ref{lem-the-first-lemma-for-moser-iteration-i3}, an argument similar to the proof in Section 2 of \cite{Hs3}, and a compactness argument we have the following result.

\begin{prop}\label{Lp-L-infty-thm}
Let $n\geq 3$, $0<m\leq \frac{n-2}{n}$, $p>\frac{n(1-m)}{2}$, $0<\delta<\delta_0$, and let $f\in L^1(\partial\Omega\times(0,T))$ be such that $f\leq M$ for some constant $M>0$. Suppose $u\in C^{2,1}(\2{\Omega}_{\delta}\times (0,T))$ is a solution of \eqref{Neumann-holes-eqn} in $\Omega_{\delta}\times (0,T)$. Then for any $0<t_1<t_2<T$, $r_1\in (0,\delta_0/2)$, there exists constants $\theta>0$ and $C>0$ depending on $M$, $m$, and $n$ such that
\begin{equation*}
\|u\|_{L^{\infty}(E_{r_1}\times(t_2,T))}\leq C\left\{1+\int_{t_1}^T\int_{E_{2r_1}}u^p\,dxdt\right\}^{\frac{\theta}{p}}
\end{equation*}
where $E_{r_1}=\{x\in\Omega : \textbf{dist}(x,\partial\Omega)<r_1\}$.
\end{prop}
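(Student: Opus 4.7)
The plan follows the classical Moser iteration strategy for the fast-diffusion equation, adapted to the mixed-boundary setting, and then patched along $\1\Omega$ by a finite covering argument.

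\textbf{Step 1 (reverse H\"older).} Fix $x_0\in\1\Omega$ and flatten $\1\Omega$ near $x_0$ via the diffeomorphism $\psi_1$ from \eqref{domain-transformation}, obtaining local half-cylinder coordinates $D_\rho\subset\Omega$. Set $v=\max(u,1)\ge 1$ and $V=v^{(\alp+m)/2}$. Lemma \ref{lem-the-first-lemma-for-moser-iteration-i3} controls both $\sup_t\int_{D_r}v^{\alp+1}\,dx$ and $\iint_S|\D V|^2\,dx\,dt$ by $\iint_R v^{\alp+1}\,dx\,dt$. Couple this with the Gagliardo--Nirenberg inequality
\[
\|V(t)\|_{L^q_x}\le C\|\D V(t)\|_{L^2_x}^{\theta}\|V(t)\|_{L^{r_*}_x}^{1-\theta},\qquad \tfrac{1}{q}=\theta\bigl(\tfrac{1}{2}-\tfrac{1}{n}\bigr)+\tfrac{1-\theta}{r_*},
\]
using the fine-tuned exponents $r_*=2(\alp+1)/(\alp+m)$ (so that $\|V(t)\|_{L^{r_*}_x}^{r_*}=\int v^{\alp+1}\,dx$ matches the energy bound exactly) and $\theta=2/q$; integrating in $t$ produces the reverse-H\"older inequality
\[
\iint_S v^{(1+2/n)\beta-(1-m)}\,dx\,dt\le C(1+\alp^2)^{(n+2)/n}\bigl\{(\rho-r)^{-2}+(t_2-t_1)^{-1}\bigr\}^{(n+2)/n}\Bigl(\iint_R v^{\beta}\,dx\,dt\Bigr)^{(n+2)/n},
\]
where $\beta=\alp+1$ and $C=C(M,m,n)$.

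\textbf{Step 2 (Moser iteration).} Define $\beta_{k+1}=(1+2/n)\beta_k-(1-m)$ with $\beta_0=p$. The ratio $\beta_{k+1}/\beta_k=1+2/n-(1-m)/\beta_k$ exceeds $1$ exactly when $\beta_k>n(1-m)/2$, so the assumption $p>n(1-m)/2$ propagates and makes $\beta_k\uparrow\infty$ geometrically. On nested cylinders $Q_k=D_{r_k}\times(\tau_k,T)$ with $r_k=r_1(1+2^{-k})$ and $\tau_k=t_2-(t_2-t_1)2^{-k}$, apply Step 1 at $(R,S)=(Q_k,Q_{k+1})$ and take $\beta_{k+1}$-th roots to obtain a bound of the form
\[
\|v\|_{L^{\beta_{k+1}}(Q_{k+1})}\le A^{k/\beta_{k+1}}\|v\|_{L^{\beta_k}(Q_k)}^{1+(1-m)/\beta_{k+1}}
\]
for some $A=A(M,m,n,p,r_1,t_2-t_1)>1$. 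Geometric growth of $\beta_k$ makes the series $\sum_k k/\beta_{k+1}$ convergent and the telescoping product $\prod_k(1+(1-m)/\beta_{k+1})$ bounded, so iterating and sending $k\to\infty$ yields
\[
\|v\|_{L^\infty(D_{r_1}\times(t_2,T))}\le C\Bigl(1+\iint_{D_{2r_1}\times(t_1,T)}v^p\,dx\,dt\Bigr)^{\theta/p}
\]
for some $\theta=\theta(m,n,p)>0$.

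\textbf{Step 3 (compactness patching).} Cover the compact set $\overline{E_{r_1}}\cap\1\Omega$ by a finite family of boundary charts $\{D_{r_1}^{(j)}\}_{j=1}^N$ of the above type. Since $r_1<\delta_0/2$, each $D_{2r_1}^{(j)}$ lies in a tubular neighborhood of $\1\Omega$ of width less than $\delta_0/2$ and hence stays at distance at least $\delta_0/2$ from every $\2{B_\delta(a_i)}$, uniformly in $\delta\in(0,\delta_0)$. Consequently the hole Neumann data $g_i$ never enter the local energy estimate of Lemma \ref{lem-the-first-lemma-for-moser-iteration-i3}, and all constants are $\delta$-independent. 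Summing the local $L^\infty$-bounds across $j$ and using $u\le v\le 1+u$ produces the claimed estimate on $E_{r_1}\times(t_2,T)$.

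\textbf{Main obstacle.} The delicate point is the strictness of the Moser gain $\beta_{k+1}>\beta_k$, which reduces to $\beta_k>n(1-m)/2$ — the sharp supercritical threshold for $L^p$-$L^\infty$ regularization in the fast-diffusion regime. Securing this gain at every step forces the fine-tuned Gagliardo--Nirenberg choice of Step 1 with $r_*$ matched to the $L^\infty_tL^{\alp+1}_x$ information carried by $V$; the cruder parabolic Sobolev inequality $\iint w^{2(n+2)/n}\le C(\sup_t\int w^2)^{2/n}\iint|\D w|^2$ (which merely uses $V\in L^\infty_tL^2_x$) would only deliver the strictly larger threshold $p>(n+2)(1-m)/2$, falling short of the hypothesis. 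A secondary technical point is keeping the boundary integral on $\1\Omega$ under control in the iteration; this is already handled by the trace estimate \eqref{bd-estimate} used in the proof of Lemma \ref{lem-the-first-lemma-for-moser-iteration-i3}.
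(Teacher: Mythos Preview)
Your proposal is correct and follows essentially the same route as the paper, which does not give a detailed proof but only records that the result follows from Lemma~\ref{lem-the-first-lemma-for-moser-iteration-i3}, ``an argument similar to the proof in Section~2 of \cite{Hs3}'' (i.e., Moser iteration), and a compactness argument; your Steps~1--3 are exactly a fleshed-out version of these three ingredients, and your remark that the fine-tuned Gagliardo--Nirenberg choice (with $r_*=2(\alpha+1)/(\alpha+m)$) is what secures the sharp threshold $p>n(1-m)/2$ is precisely the point of the iteration carried out in \cite{Hs3}.
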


Similarly we have the following result.

\begin{prop}\label{Lp-L-infty-thm-1}
Let $n\ge 3$, $0<m\le \frac{n-2}{n}$, $p>\frac{n(1-m)}{2}$, $0<\delta<\delta_0$, and let $f\in L^1(\partial\Omega\times(0,T))$ and $g_i\in L^1(\partial B_{\delta}(a_i)\times (0,T))$ be such that $f\leq M$ and $g_i\leq M$ for all $i=1,\dots,i_0$ and some constant $M>0$. Suppose $u\in C^{2,1}(\2{\Omega}_{\delta}\times (0,T))$ is a solution of \eqref{Neumann-holes-eqn}  in $\Omega_{\delta}\times (0,T)$. Then for any $0<t_1<t_2<T$, there exist constants  $\theta>0$ and $C>0$ depending on $M$, $\delta$, $m$, and $n$ such that
\begin{equation*}
\|u\|_{L^{\infty}(\Omega_{\delta}\times(t_2,T])}\leq C\left(1+\iint_{\Omega_{\delta}\times(t_1,T]}u^p\,dxdt\right)^{\frac{\theta}{p}}.
\end{equation*}
\end{prop}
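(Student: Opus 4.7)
The plan is to reduce the proposition to three local $L^p$--$L^\infty$ bounds—one near $\partial\Omega$, one near each hole boundary $\partial B_\delta(a_i)$, and an interior one—and then glue them together by a finite covering of the compact set $\overline{\Omega}_\delta$. Since Lemma \ref{lem-the-first-lemma-for-moser-iteration-i3} was already stated for the full Neumann problem \eqref{Neumann-holes-eqn} with both the $f$- and $g_i$-boundary data, all the necessary energy estimates are in place; only the Moser iteration needs to be repeated in a new geometry.

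Near $\partial\Omega$, Proposition \ref{Lp-L-infty-thm} already yields the required estimate in the collar $E_{r_1}$ for any fixed $r_1\in(0,\delta_0/2)$. Near each $\partial B_\delta(a_i)$ I would run precisely the same argument: flatten $\partial B_\delta(a_i)$ locally via a diffeomorphism analogous to $\psi_1$ in \eqref{domain-transformation} (the sphere $\partial B_\delta(a_i)$ is smooth, so the construction of $D_r'$, $D_r$, $R(t)$, $S(t)$ and the cut-off $\eta$ carries over verbatim with $\partial B_\delta(a_i)$ replacing $\partial\Omega$), observe that the boundary term coming from $g_i$ is controlled in \eqref{bd-estimate} exactly as the $f$-term was, and then iterate \eqref{step1-ineqn} in the manner of Section 2 of \cite{Hs3}. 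The Sobolev embedding $H^1\hookrightarrow L^{2n/(n-2)}$ combined with the hypothesis $p>n(1-m)/2$ makes the Moser recursion close and produces a local bound of the form
\begin{equation*}
\|u\|_{L^{\infty}(F_{r_1,i}\times(t_2,T])}\leq C\Bigl(1+\iint_{F_{2r_1,i}\times(t_1,T]}u^p\,dx\,dt\Bigr)^{\theta/p},
\end{equation*}
where $F_{r_1,i}=\{x\in\Omega_\delta:\mathrm{dist}(x,\partial B_\delta(a_i))<r_1\}$.

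For the remaining region, which is a closed subset of $\Omega_\delta$ at positive distance from $\partial\Omega_\delta$, I would apply the standard interior $L^p$--$L^\infty$ smoothing estimate for the fast diffusion equation in the supercritical range $p>n(1-m)/2$ (pure Moser iteration with no boundary term, as in \cite{HP}, \cite{DaK}, \cite{Hs3}). Choosing $r_1$ small enough that the three collars $E_{r_1}$ and $F_{r_1,i}$ together with a compact interior core cover $\overline{\Omega}_\delta$, a finite covering argument combines the local bounds into the global estimate with $C$ depending on $M$, $\delta$, $m$, $n$ and a uniform exponent $\theta/p$.

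The main obstacle is entirely bookkeeping: verifying that the Moser exponent $\theta$ produced by each local iteration is the same (so the gluing does not worsen the power of the $L^p$ norm) and tracking how the constants depend on $\delta$ through the geometry of the flattening of $\partial B_\delta(a_i)$ and the size of the collars. Once this is done, the argument is a direct analogue of the proof of Proposition \ref{Lp-L-infty-thm}.
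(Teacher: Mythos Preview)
Your proposal is correct and matches the paper's approach: the paper simply writes ``Similarly we have the following result'' after Proposition~\ref{Lp-L-infty-thm}, meaning exactly the decomposition you describe---repeat the boundary-flattening energy estimate of Lemma~\ref{lem-the-first-lemma-for-moser-iteration-i3} near each component of $\partial\Omega_\delta$ (so near each $\partial B_\delta(a_i)$ as well as near $\partial\Omega$, using $g_i\le M$ in place of $f\le M$), combine with the standard interior Moser iteration, and conclude by a compactness/finite-cover argument. Your concern about matching the exponents $\theta$ from the different local iterations is harmless: the recursion is identical in each geometry and depends only on $m$, $n$, $p$, so the same $\theta$ arises everywhere.
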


\begin{prop}\label{Lp-L-infty-thm-2}
Let $n\ge 3$, $0<m\le \frac{n-2}{n}$, $p>\frac{n(1-m)}{2}$, $0<\delta<\delta_0$, and let $f\in L^1(\partial\Omega\times(0,T))$ and $g_i\in L^1(\partial B_{\delta}(a_i)\times (0,T))$ be such that $f\leq M$ and $g_i\leq M$ for all $i=1,\dots,i_0$ and some constant $M>0$. Suppose $0\le u_0\in L^p(\Omega_{\delta})$ and $u\in C^{2,1}(\2{\Omega}_{\delta}\times (0,T))$ is a solution of \eqref{Neumann-holes-eqn} in $\Omega_{\delta}\times (0,T)$. Then for any $0<t_1<T$ there exist constants  
$\theta>0$ and $C>0$ depending on $M$, $\delta$, $m$, and $n$ such that
\begin{equation*}
\|u\|_{L^{\infty}(\Omega_{\delta}\times(t_1,T])}\leq C\left(1+\int_{\Omega_{\delta}}u_0^p\,dx\right)^{\frac{\theta}{p}}.
\end{equation*}
\end{prop}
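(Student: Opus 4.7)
The plan is to deduce the claim from Proposition~\ref{Lp-L-infty-thm-1}. That proposition already bounds $\|u\|_{L^{\infty}(\Omega_{\delta}\times(t_{2},T])}$ by a power of $\iint_{\Omega_{\delta}\times(t_{1}',T]}u^{p}\,dx\,dt$ for any $0<t_{1}'<t_{2}$, so it suffices to establish the a priori $L^{p}$ estimate
\begin{equation*}
\sup_{0<t<T}\int_{\Omega_{\delta}}u(x,t)^{p}\,dx\le C\Bigl(1+\int_{\Omega_{\delta}}u_{0}^{p}\,dx\Bigr),
\end{equation*}
with $C$ depending on $M$, $\delta$, $m$, $n$, $p$ and $T$. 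Integrating this bound in $t$ and inserting the result into Proposition~\ref{Lp-L-infty-thm-1} with (for instance) $t_{1}'=t_{1}/2$ and $t_{2}=t_{1}$ yields the conclusion.

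To derive the $L^{p}$ estimate I would multiply $u_{t}=\Delta u^{m}$ by $pu^{p-1}$ (after first regularising by $(u+\3)^{p-1}$ or $\max(u,\3)^{p-1}$ to handle any degeneracy at zero and sending $\3\to 0$), integrate by parts over $\Omega_{\delta}$, and set $w=u^{(p+m-1)/2}$ to obtain
\begin{equation*}
\frac{d}{dt}\int_{\Omega_{\delta}}u^{p}\,dx+\frac{4pm(p-1)}{(p+m-1)^{2}}\int_{\Omega_{\delta}}|\nabla w|^{2}\,dx
=p\int_{\1\Omega}u^{p-1}f\,d\sigma+p\sum_{i=1}^{i_{0}}\int_{\1 B_{\delta}(a_{i})}u^{p-1}g_{i}\,d\sigma.
\end{equation*}
Using $f,g_{i}\le M$ and writing $u^{p-1}=w^{\beta}$ with exponent $\beta:=2(p-1)/(p+m-1)<2$, the trace theorem on the smooth boundary $\1\Omega_{\delta}$ combined with Sobolev embedding in dimension $n\ge 3$ gives
\begin{equation*}
\int_{\1\Omega_{\delta}}w^{\beta}\,d\sigma\le C_{1}\bigl(1+\|w\|_{L^{2}(\Omega_{\delta})}^{\beta}+\|\nabla w\|_{L^{2}(\Omega_{\delta})}^{\beta}\bigr).
\end{equation*}
Because $\beta<2$, Young's inequality absorbs a small multiple of $\|\nabla w\|_{L^{2}}^{2}$ into the left-hand side, while the elementary inequality $u^{p+m-1}\le u^{p}+1$ controls $\|w\|_{L^{2}}^{2}$ by $\int u^{p}\,dx+|\Omega_{\delta}|$. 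Altogether these estimates yield the differential inequality $U'(t)\le C_{2}U(t)+C_{3}$ for $U(t):=\int_{\Omega_{\delta}}u^{p}(x,t)\,dx$ on $(0,T)$, and Gronwall's lemma then gives $U(t)\le e^{C_{2}(t-\tau)}(U(\tau)+C_{3}(t-\tau))$ for any $0<\tau<t<T$.

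The main obstacle is passing $\tau\to 0$: the definition of solution only provides $u(\cdot,\tau)\to u_{0}$ in $L^{1}(\Omega_{\delta})$, not in $L^{p}$. I would overcome this by approximation. Choose $u_{0,k}\in C^{\infty}(\overline{\Omega}_{\delta})$ with $u_{0,k}\to u_{0}$ in $L^{p}(\Omega_{\delta})$; the corresponding classical solutions $u_{k}$ are smooth up to $t=0$, so the Gronwall bound applies with $\tau=0$ and gives $\|u_{k}(t)\|_{L^{p}}^{p}\le C(1+\|u_{0,k}\|_{L^{p}}^{p})$ uniformly in $k$. The $L^{1}$-contraction of Lemma~\ref{comparsion-lemma1} (applied to the pair $u_{k},u$ sharing the same Neumann data) forces $u_{k}\to u$ in $L^{1}(\Omega_{\delta})$ uniformly on compact subintervals of $(0,T)$, hence a.e.\ along a subsequence, and Fatou's lemma produces the required $L^{p}$ bound for $u$. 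Everything else is routine calculus; this passage to the limit is the only genuinely delicate point in the argument.
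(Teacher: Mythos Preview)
Your proposal is correct and follows the same overall strategy as the paper: obtain a time-uniform $L^{p}$ bound on $u$ via an energy estimate with test function $u^{p-1}$, then feed this into Proposition~\ref{Lp-L-infty-thm-1}. The execution, however, differs in two places.

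First, the paper does not pass to the substitution $w=u^{(p+m-1)/2}$ and the trace/Sobolev machinery. Instead it keeps a fixed cutoff $0<a\le 1$, sets $v=\max(u,a)$, and uses the elementary trace bound $\int_{\partial\Omega_{\delta}}v^{p-1}\,d\sigma\le C\bigl(\int|\nabla v^{p-1}|+\int v^{p-1}\bigr)$ together with Young's inequality. The lower bound $v\ge a$ then allows $\int v^{p-m-1}\le a^{-m}\int v^{p-1}$ and, after H\"older, the \emph{sublinear} differential inequality $\frac{d}{dt}\int v^{p}\le C_{a}\bigl(\int v^{p}\bigr)^{(p-1)/p}$, which integrates directly to \eqref{u^p-integral-bd}; the parameter $a$ is never sent to $0$ in this proof. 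Your route through $\beta=2(p-1)/(p+m-1)<2$ and a linear Gronwall is equally valid but yields a bound exponential in $T$ rather than polynomial.

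Second, on the passage to $t=0$: the paper simply records \eqref{u^p-integral-bd} with $\int(u_{0}+a)^{p}$ on the right and does not comment further; in all applications in Section~3 the proposition is invoked only for solutions smooth up to $t=0$, where the issue is absent. Your approximation argument via smooth $u_{0,k}$, Lemma~\ref{comparsion-lemma1}, and Fatou is a correct way to close this gap in the generality stated, though it requires existence of classical solutions for smooth positive data---a standard fact from \cite{LSU}, but one that appears later in the paper's logical order. The paper's fixed-cutoff device is the more self-contained route here.
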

\begin{proof}
Let $0<a\leq 1$ and $v=\max(u,a)$. As before by Lemma \ref{weak-w12-soln-lem} the function $v$ satisfies 
\eqref{weak-w12-soln2}. Since the function $v^{p-1}\in C(\2{\Omega}_{\delta}\times (0,T))$ and its first order weak derivatives belong to $L^2({\Omega}_{\delta}\times (0,T))$, by \eqref{weak-w12-soln2},
\begin{align}\label{v-integral-time-derivative}
\frac{\1}{\1 t}\left(\int_{\Omega_{\delta}}v^p\,dx\right)
=&p\int_{\Omega_{\delta}}v^{p-1}v_t\,dx\notag\\
\le&-p\int_{\Omega_{\delta}}\nabla v^{p-1}\cdot\nabla v^m\,dx
+p\int_{\1\Omega_{\delta}\cup(\cup_{i=1}^{i_0}\1 B_{\delta}(a_i))}v^{p-1}\frac{\1 v^m}{\1\nu}\,d\sigma\notag\\
\le &-p(p-1)m\int_{\Omega_{\delta}}v^{p+m-3}|\nabla v|^2\,dx
+Mp\int_{\1\Omega_{\delta}\cup(\cup_{i=1}^{i_0}\1 B_{\delta}(a_i))}v^{p-1}\,d\sigma.
\end{align}
By an argument similar to the proof of Lemma 1.1 of \cite{Hu1} there exists a constant $C_1>0$ such that,
\begin{align}\label{v-bdary-integral-ineqn}
\int_{\1\Omega_{\delta}\cup(\cup_{i=1}^{i_0}\1 B_{\delta}(a_i))}v^{p-1}\,d\sigma
\le&C_1\left(\int_{\Omega_{\delta}}|\nabla v^{p-1}|\,dx+\int_{\Omega_{\delta}}v^{p-1}\,dx\right)\notag\\
=&(p-1)C_1\int_{\Omega_{\delta}}v^{p-2}|\nabla v|\,dx+C_1\int_{\Omega_{\delta}}v^{p-1}\,dx\notag\\
\le&\frac{m(p-1)}{2M}\int_{\Omega_{\delta}}v^{p+m-3}|\nabla v|^2\,dx
+\frac{(p-1)MC_1^2}{2m}\int_{\Omega_{\delta}}v^{p-m-1}\,dx+C_1\int_{\Omega_{\delta}}v^{p-1}\,dx.
\end{align}
By \eqref{v-integral-time-derivative} and \eqref{v-bdary-integral-ineqn},
\begin{align*}
\frac{\1}{\1 t}\left(\int_{\Omega_{\delta}}v^p\,dx\right)
\le&C_2\left(\int_{\Omega_{\delta}}v^{p-m-1}\,dx+\int_{\Omega_{\delta}}v^{p-1}\,dx\right)\\
\le&\frac{2C_2}{a^m}\int_{\Omega_{\delta}}v^{p-1}\,dx\\
\le&\frac{2C_2|\Omega|^{1/p}}{a^m}\left(\int_{\Omega_{\delta}}v^p\,dx\right)^{\frac{p-1}{p}}
\end{align*}
for some constant $C_2>0$. Hence
\begin{equation}\label{u^p-integral-bd}
\left(\int_{\Omega_{\delta}}u^p(x,t)\,dx\right)^{\frac{1}{p}}\le\left(\int_{\Omega_{\delta}}v^p(x,t)\,dx\right)^{\frac{1}{p}}\le\left(\int_{\Omega_{\delta}}(u_0+a)^p\,dx\right)^{\frac{1}{p}}+2C_2p^{-1}a^{-m}|\Omega|^{1/p}t\quad\forall 0<t<T.
\end{equation}
By \eqref{u^p-integral-bd} and  Proposition \ref{Lp-L-infty-thm-1},  Proposition \ref{Lp-L-infty-thm-2} follows.
\end{proof}

\begin{lemma}\label{u-Lp-loc-bd-u0-Lp-loc-lemma}
Let $n\ge 3$, $0<m\le \frac{n-2}{n}$, $p>\frac{n(1-m)}{2}$, $0<\delta<\delta_2<\delta_1<\delta_0$, and $g_i\in L^1(\partial B_{\delta}(a_i)\times (0,T))$ for all $i=1,\dots,i_0$ and let $f\in L^1(\partial\Omega\times(0,T))$ be such that $f\leq M$ for some constant $M>0$. Suppose $0\le u_0\in L^p(\Omega_{\delta})$ and $u\in C^{2,1}(\2{\Omega}_{\delta}\times (0,T))$ is a solution of \eqref{Neumann-holes-eqn} in $\Omega_{\delta}\times (0,T)$. Then there exists a constant  $C>0$ depending on $M$, $\delta_1$, $\delta_2$, $m$, and $n$ such that
\begin{equation*}
\left(\int_{E_{\delta_0-\delta_1}}u^p(x,t)\,dx\right)^{\frac{1-m}{p}}
\le\left(\int_{E_{\delta_0-\delta_3}}(u_0+1)^p\,dx\right)^{\frac{1-m}{p}}+Ct\quad\forall 0<t<T
\end{equation*}
holds for any $0<\delta\le\delta_2$ where $\delta_3=(\delta_1+\delta_2)/2$ and $E_r=\{x\in\Omega:\mbox{dist}\, (x,\1\Omega)<r\}$ for any $0<r<\delta_0$.
\end{lemma}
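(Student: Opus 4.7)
The plan is to follow the strategy of Proposition 2.9 but localize everything near $\partial\Omega$ by a suitable cutoff. Since we only control $f \le M$ (and have no upper bound on $g_i$), the localization must kill the $g_i$ contribution. Concretely, choose $\phi\in C_c^{\infty}(\Omega)$ with $0\le\phi\le 1$, $\phi\equiv 1$ on $\overline{E_{\delta_0-\delta_1}}$ and $\supp\phi\subset E_{\delta_0-\delta_3}$, with $|\nabla\phi|\le C/(\delta_1-\delta_2)$. Since $\text{dist}(a_i,\1\Omega)\ge 2\delta_0$, any point in $\supp\phi$ is at distance greater than $\delta_0+\delta_3>\delta$ from each $a_i$, so $\phi$ vanishes in a neighborhood of $\bigcup_i\1 B_\delta(a_i)$. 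Fix an integer $k\ge 2p/(1-m)$, let $v=\max(u,1)$, and set $y(t)=\int_{\Omega_\delta}v^p\phi^k\,dx$. Because $\phi\equiv 1$ on $E_{\delta_0-\delta_1}$ and $\supp\phi\subset E_{\delta_0-\delta_3}$, it suffices to show $y(t)^{(1-m)/p}\le y(0)^{(1-m)/p}+Ct$, the bound on $y(0)$ being immediate from $v(\cdot,0)\le u_0+1$.

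To estimate $y'(t)$, apply the weak inequality from Lemma \ref{weak-w12-soln-lem} (in the form \eqref{weak-w12-soln2}) with test function $\psi=v^{p-1}\phi^k$. Since $\psi\equiv 0$ on $\bigcup_i\1 B_\delta(a_i)$, the $g_i$-boundary term drops out, and expanding $\nabla\psi$ gives
\begin{equation*}
\frac{1}{p}y'(t)\le -m(p-1)\int\phi^k v^{p+m-3}|\nabla v|^2\,dx+mk\int\phi^{k-1}v^{p+m-2}|\nabla v||\nabla\phi|\,dx+M\int_{\1\Omega}v^{p-1}\phi^k\,d\sigma.
\end{equation*}
Use $2AB\le\epsilon A^2+\epsilon^{-1}B^2$ on the cross term to absorb half of the $|\nabla v|^2$ integral, leaving a residual of order $\int v^{p+m-1}\phi^{k-2}|\nabla\phi|^2\,dx$. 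For the $\1\Omega$ boundary term, pick a smooth vector field $\vec X$ supported near $\1\Omega$ (hence away from the holes) with $\vec X\cdot\nu=1$ on $\1\Omega$; since $v^{p-1}\phi^k\equiv 0$ near the holes, the divergence theorem gives
\begin{equation*}
\int_{\1\Omega}v^{p-1}\phi^k\,d\sigma=\int_{\Omega_\delta}\nabla\cdot(v^{p-1}\phi^k\vec X)\,dx,
\end{equation*}
and one more AM-GM absorbs the resulting $|\nabla v|$ against the negative $|\nabla v|^2$ term, leaving bulk terms in $v^{p+m-1}\phi^{k-2}|\nabla\phi|^2$ and $v^{p+m-1}\phi^{j}$ (using $v\ge 1$ to replace $v^{p-m-1}$ and $v^{p-1}$ by $v^{p+m-1}$).

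Each residual bulk integral is now bounded by $Cy^{(p+m-1)/p}$ via Hölder with conjugate exponents $p/(p+m-1)$ and $p/(1-m)$: writing $v^{p+m-1}\phi^{k-2}|\nabla\phi|^2=(v^p\phi^k)^{(p+m-1)/p}\cdot\phi^{k-2-k(p+m-1)/p}|\nabla\phi|^2$ and using $k-2-k(p+m-1)/p=k(1-m)/p-2\ge 0$, the second factor integrates to a constant depending only on $\phi$, and similarly for the other $v^{p+m-1}\phi^j$ terms. Thus $y'(t)\le Cy(t)^{(p+m-1)/p}$, which upon multiplying by $y^{-(p+m-1)/p}$ and integrating yields $y(t)^{(1-m)/p}\le y(0)^{(1-m)/p}+Ct$, as required. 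The only delicate step is tuning $k$: the estimate $\int v^{p+m-1}\phi^{k-2}|\nabla\phi|^2\le Cy^{(p+m-1)/p}$ forces $k\ge 2p/(1-m)$, and it is this gradient-of-cutoff term (absent in Proposition \ref{Lp-L-infty-thm-2}) that produces the exponent $(1-m)/p$ in the conclusion instead of $1/p$.
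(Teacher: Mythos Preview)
Your proof is correct and follows essentially the same approach as the paper: both localize near $\partial\Omega$ with a power-of-cutoff weight (the paper writes $\eta^2=\phi^{2\alpha}$ with $\alpha>\max(p/2,\,p/(1-m))$, which matches your $k\ge 2p/(1-m)$), test against $v^{p-1}$ times this weight in Lemma~\ref{weak-w12-soln-lem}, absorb the cross term and the $\partial\Omega$ trace term by Young's inequality (the paper invokes the trace argument of \cite{Hu1}, which is exactly your vector-field/divergence computation), and close via the ODE $y'\le Cy^{(p+m-1)/p}$. One small slip: you write $\phi\in C_c^\infty(\Omega)$, but since $\phi\equiv 1$ on $\overline{E_{\delta_0-\delta_1}}\supset\partial\Omega$ you must mean $\phi\in C^\infty(\overline\Omega)$ with $\supp\phi\subset\overline{E_{\delta_0-\delta_3}}$ (this is what keeps $\phi$ away from the holes and kills the $g_i$ terms, as you correctly argue).
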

\begin{proof}
We choose $\phi\in C^{\infty}(\2{\Omega})$, $0\le\phi\le 1$, such that $\phi (x)=1$ for all $x\in E_{\delta_0-\delta_1}$, $\phi (x)=0$ for all $x\not\in E_{\delta_0-\delta_3}$. Let $\eta=\phi^{\alpha}$ for some constant $\alpha>0$ to be chosen later and let $v=\max (u,1)$. By Lemma \ref{weak-w12-soln-lem} the function $v$ satisfies 
\eqref{weak-w12-soln2}. Since the function $v^{p-1}\eta^2\in C(\2{\Omega}_{\delta}\times (0,T))$ and its first order weak derivatives belong to $L^2({\Omega}_{\delta}\times (0,T))$, by \eqref{weak-w12-soln2},
\begin{align}\label{v-eta-integral-time-derivative}
\frac{\1}{\1 t}\left(\int_{\Omega_{\delta}}v^p\eta^2\,dx\right)
=&p\int_{\Omega_{\delta}}v^{p-1}\eta^2v_t\,dx\notag\\
\le&-p\int_{\Omega_{\delta}}\nabla (v^{p-1}\eta^2)\cdot\nabla v^m\,dx
+p\int_{\1\Omega}v^{p-1}\eta^2\frac{\1 v^m}{\1\nu}\,d\sigma\notag\\
\le&-p(p-1)m\int_{\Omega_{\delta}}v^{p+m-3}\eta^2|\nabla v|^2\,dx
+2pm\int_{\Omega_{\delta}}v^{p+m-2}\eta|\nabla\eta||\nabla v|\,dx\notag\\
&\qquad +pM\int_{\1\Omega}v^{p-1}\eta^2\,d\sigma.
\end{align}
Now
\begin{equation}\label{v-Lq-1}
2\int_{\Omega_{\delta}}v^{p+m-2}\eta|\nabla\eta||\nabla v|\,dx
\le\frac{(p-1)}{4}\int_{\Omega_{\delta}}v^{p+m-3}\eta^2|\nabla v|^2\,dx
+\frac{4}{p-1}\int_{\Omega_{\delta}}v^{p+m-1}|\nabla\eta|^2\,dx.
\end{equation}
By an argument similar to the proof of Lemma 1.1 of \cite{Hu1} there exists a constant $C_1>0$ such that,
\begin{align}\label{v-eta-bdary-integral-ineqn}
&\int_{\1\Omega}v^{p-1}\eta^2\,d\sigma\notag\\
&\qquad \le C_1\int_{\Omega_{\delta}}|\nabla (v^{p-1}\eta^2)|\,dx \notag\\
&\qquad  \le (p-1)C_1\int_{\Omega_{\delta}}v^{p-2}\eta^2|\nabla v|\,dx+2C_1\int_{\Omega_{\delta}}v^{p-1}\eta|\nabla\eta|\,dx\notag\\
&\qquad \le\frac{m(p-1)}{2M}\int_{\Omega_{\delta}}v^{p+m-3}\eta^2|\nabla v|^2\,dx
+\frac{(p-1)MC_1^2}{2m}\int_{\Omega_{\delta}}v^{p-m-1}\eta^2\,dx
+2C_1\int_{\Omega_{\delta}}v^{p-1}\eta |\nabla\eta|\,dx.
\end{align}
By \eqref{v-eta-integral-time-derivative}, \eqref{v-Lq-1} and \eqref{v-eta-bdary-integral-ineqn},
\begin{align}\label{v^q-time-derivative-ineqn}
&\frac{\1}{\1 t}\left(\int_{\Omega_{\delta}}v^p\eta^2\,dx\right)\notag\\
&\qquad \le C_2\left(\int_{\Omega_{\delta}}v^{p-m-1}\eta^2\,dx+\int_{\Omega_{\delta}}v^{p-1}\eta |\nabla\eta|\,dx
+\int_{\Omega_{\delta}}v^{p+m-1}|\nabla\eta|^2\,dx\right)\notag\\
&\qquad \le C_2\left[\int_{\Omega_{\delta}}v^{p-1}\eta^2\,dx
+\int_{\Omega_{\delta}}(v^p\eta^2)^{\frac{p-1}{p}} |\nabla\eta|\eta^{-1+\frac{2}{p}}\,dx
+\int_{\Omega_{\delta}}(v^p\eta^2)^{\frac{p+m-1}{p}}(|\nabla\eta|\eta^{-1+\frac{1-m}{p}})^2\,dx\right]
\end{align}
for some constant $C_2>0$. 
Now
\begin{equation}\label{eta-expression-bd1}
|\nabla\eta|\eta^{-1+\frac{2}{p}}=\alpha\phi^{\frac{2\alpha}{p}-1}|\nabla\phi|\in L^{\infty}\quad\mbox{ if }\alpha>\frac{p}{2}
\end{equation}
and
\begin{equation}\label{eta-expression-bd2}
|\nabla\eta|\eta^{-1+\frac{1-m}{p}}=\alpha\phi^{\frac{(1-m)}{p}\alpha-1}|\nabla\phi|\in L^{\infty}\quad\mbox{ if }
\alpha>\frac{p}{1-m}.
\end{equation}
We now choose $\alpha>\max \left(\frac{p}{2},\frac{p}{1-m}\right)$. Since
\begin{equation}
\int_{\Omega_{\delta}}v^p\eta^2\,dx\ge\int_{\Omega}\eta^2\,dx>0,
\end{equation}
by \eqref{v^q-time-derivative-ineqn}, \eqref{eta-expression-bd1} and
\eqref{eta-expression-bd2},
\begin{align}\label{v^q-time-derivative-ineqn2}
\frac{\1}{\1 t}\left(\int_{\Omega_{\delta}}v^p\eta^2\,dx\right)
\le&C_3\left[\int_{\Omega_{\delta}}v^{p-1}\eta^2\,dx
+\int_{\Omega_{\delta}}(v^p\eta^2)^{\frac{p-1}{p}} \,dx
+\int_{\Omega_{\delta}}(v^p\eta^2)^{\frac{p+m-1}{p}}\,dx\right]\notag\\
\le&C_4\left[\left(\int_{\Omega_{\delta}}v^p\eta^2\,dx\right)^{\frac{p-1}{p}}+\left(\int_{\Omega_{\delta}}v^p\eta^2\,dx\right)^{\frac{p+m-1}{p}}\right]\notag\\
\le&C_5\left(\int_{\Omega_{\delta}}v^p\eta^2\,dx\right)^{\frac{p+m-1}{p}}
\end{align}
for some constants $C_3>0$, $C_4>0$, $C_5>0$. Integrating \eqref{v^q-time-derivative-ineqn2},
\begin{equation*}
\left(\int_{\Omega_{\delta}}u^p(x,t)\eta^2(x)\,dx\right)^{\frac{1-m}{p}}\le\left(\int_{\Omega_{\delta}}v^p(x,t)\eta^2(x)\,dx\right)^{\frac{1-m}{p}}\le\left(\int_{\Omega_{\delta}}(u_0+1)^p\eta^2\,dx\right)^{\frac{1-m}{p}}+C_6t\quad\forall 0<t<T
\end{equation*}
for some constant $C_6>0$ and the lemma follows.
\end{proof}

By a similar argument we also have the following result.

\begin{prop}\label{Lp-L-infty-thm-3}
Let $n\ge 3$, $0<m\le \frac{n-2}{n}$, $p>\frac{n(1-m)}{2}$, $0<\delta<\delta_0$. Let $0\le u_0\in L^p(\Omega_{\delta})$, $g_i\in L^1(\partial B_{\delta}(a_i)\times (0,T))$ for all $i=1,2,\dots,i_0$, $f\in L^1(\partial\Omega\times(0,T))$  be such that $f\leq M$ for some constant $M>0$. Suppose $u\in C^{2,1}(\2{\Omega}_{\delta}\times (0,T))$ is a solution of \eqref{Neumann-holes-eqn} in $\Omega_{\delta}\times (0,T)$. Then for any $0<t_1<T$, $\delta<\delta_1<\delta_2\le\delta_0$, there exist constants  $\theta>0$ and $C>0$ depending on $M$, $m$, $n$, $t_1$, $\delta_1$, and
$\delta_2$ such that
\begin{equation*}
\|u\|_{L^{\infty}(\Omega_{\delta_2}\times(t_1,T])}\leq C\left\{1+\int_{\Omega_{\delta_1}}u_0^p\,dx\right\}^{\frac{\theta}{p}}.
\end{equation*}
and for any $0<t_1<T$, $R_2>R_1>0$ such that $B_{R_2}(x_1)\subset\Omega_{\delta}$, there exist constants  $\theta>0$ and $C>0$ depending on $m$, $n$, $t_1$, $R_1$ and $R_2$, but independent of $M$ such that
\begin{equation*}
\|u\|_{L^{\infty}(B_{R_1}(x_1)\times(t_1,T])}\le C\left\{1+\int_{B_{R_2}(x_1)}u_0^p\,dx\right\}^{\frac{\theta}{p}}.
\end{equation*}
\end{prop}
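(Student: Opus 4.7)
The plan is to mimic the proofs of Proposition \ref{Lp-L-infty-thm-2} and Lemma \ref{u-Lp-loc-bd-u0-Lp-loc-lemma}, but replace the global $L^p$ energy estimate on $\Omega_{\delta}$ by a localized one built around a cutoff $\eta$ whose support avoids the holes $B_\delta(a_i)$ (and, for the second inequality, also avoids $\partial\Omega$). Once a localized $L^p$ estimate is in hand, the $L^\infty$ bound follows from Proposition \ref{Lp-L-infty-thm-1} (interior holes case) or a purely interior Moser iteration.

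For the first inequality, fix auxiliary radii $\delta<\delta_1<\delta_1'<\delta_2'<\delta_2$ and choose $\phi\in C^\infty(\overline{\Omega})$ with $0\le\phi\le 1$, $\phi\equiv 1$ on $\Omega_{\delta_2'}$ and $\phi\equiv 0$ on $\cup_{i}B_{\delta_1'}(a_i)$. Set $\eta=\phi^\alpha$ with $\alpha>\max\bigl(p/2,\,p/(1-m)\bigr)$ (this is exactly the condition in Lemma \ref{u-Lp-loc-bd-u0-Lp-loc-lemma} that makes $|\nabla\eta|\eta^{-1+2/p}$ and $|\nabla\eta|\eta^{-1+(1-m)/p}$ bounded). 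Set $v=\max(u,1)$; by Lemma \ref{weak-w12-soln-lem}, $v$ satisfies \eqref{weak-w12-soln2}. Testing with $v^{p-1}\eta^2$ kills the boundary integrals on $\cup_i\partial B_\delta(a_i)$ because $\eta\equiv 0$ there, leaving only the contribution from $\partial\Omega$, which is absorbed via the trace inequality exactly as in \eqref{v-eta-bdary-integral-ineqn}. Following the computation \eqref{v-eta-integral-time-derivative}--\eqref{v^q-time-derivative-ineqn2} verbatim yields
\begin{equation*}
\Bigl(\int_{\Omega_{\delta_2'}}u^p(x,t)\,dx\Bigr)^{\frac{1-m}{p}}\le\Bigl(\int_{\Omega_{\delta_1}}(u_0+1)^p\,dx\Bigr)^{\frac{1-m}{p}}+Ct\qquad\forall\,0<t<T.
\end{equation*}
Applying Proposition \ref{Lp-L-infty-thm-1} on the ``inner'' domain $\Omega_{\delta_2'}$ (with time interval starting a bit before $t_1$) then upgrades this to the desired $L^\infty$ bound on $\Omega_{\delta_2}\times(t_1,T]$.

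For the second inequality, let $R_3=(R_1+R_2)/2$, pick $\phi\in C_0^\infty(B_{R_2}(x_1))$ with $\phi\equiv 1$ on $B_{R_3}(x_1)$, and set $\eta=\phi^\alpha$ with the same choice of $\alpha$. Now $\eta$ vanishes on a neighborhood of $\partial\Omega_\delta$, so both boundary integrals in \eqref{weak-w12-soln2} drop out. Testing with $v^{p-1}\eta^2$ therefore gives, without any $M$-dependent terms,
\begin{equation*}
\frac{\partial}{\partial t}\int_{\Omega_\delta}v^p\eta^2\,dx\le C\Bigl(\int_{\Omega_\delta}v^p\eta^2\,dx\Bigr)^{\frac{p+m-1}{p}}
\end{equation*}
with $C$ depending only on $m,n,R_1,R_2$. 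Integrating yields an $L^p$ bound on $B_{R_3}(x_1)\times(0,T]$ in terms of $\int_{B_{R_2}(x_1)}u_0^p$, again independent of $M$. A purely interior Moser iteration, starting from Lemma \ref{lem-the-first-lemma-for-moser-iteration-i3} applied to parabolic cylinders compactly contained in $B_{R_3}(x_1)$ (so the boundary term $I$ in \eqref{bd-estimate} is absent), then converts this into an $L^\infty$ bound on $B_{R_1}(x_1)\times(t_1,T]$ with constants independent of $M$.

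The only genuine technical point is the exponent bookkeeping for the cutoff: $\alpha$ must be chosen large enough so that, after differentiating $\eta=\phi^\alpha$ and distributing powers of $\eta$ as in \eqref{eta-expression-bd1}--\eqref{eta-expression-bd2}, every gradient-of-cutoff factor that appears in the energy identity can be estimated uniformly and no spurious negative powers of $\eta$ survive to blow up near $\partial\supp\eta$. Once this is arranged, the rest is a verbatim repetition of the Moser/De Giorgi scheme developed in Lemma \ref{lem-the-first-lemma-for-moser-iteration-i3} and Propositions \ref{Lp-L-infty-thm}--\ref{Lp-L-infty-thm-2}, with all the iteration constants depending only on $m$, $n$, and the cutoff data (which is why $M$ disappears from the second estimate).
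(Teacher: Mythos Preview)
Your approach is essentially the one the paper has in mind: the paper's own ``proof'' of this proposition is the single sentence ``By a similar argument we also have the following result,'' referring back to Lemma \ref{u-Lp-loc-bd-u0-Lp-loc-lemma} and the Moser scheme of Lemma \ref{lem-the-first-lemma-for-moser-iteration-i3} and Propositions \ref{Lp-L-infty-thm}--\ref{Lp-L-infty-thm-2}. Your localized energy estimate with the cutoff $\eta=\phi^\alpha$, the choice $\alpha>\max(p/2,p/(1-m))$, and the interior Moser iteration for the second inequality are exactly the intended ingredients.

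One small correction for the first inequality: you cannot literally invoke Proposition \ref{Lp-L-infty-thm-1} ``on the inner domain $\Omega_{\delta_2'}$'', because $u$ does not satisfy any Neumann condition on $\partial B_{\delta_2'}(a_i)$, and Proposition \ref{Lp-L-infty-thm-1} also hypothesizes $g_i\le M$, which is not assumed here. The correct route (and the one implicit in the paper's cross-reference) is to cover $\Omega_{\delta_2}$ by a collar $E_{r_1}$ of $\partial\Omega$ together with interior balls: on $E_{r_1}$ apply Proposition \ref{Lp-L-infty-thm} (which needs only $f\le M$), and on the interior balls run the purely interior Moser iteration you already describe for the second inequality. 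With this adjustment your argument goes through and matches the paper's intended proof.
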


\section{Existence of solutions for the Neumann problem}
\setcounter{equation}{0}
\setcounter{thm}{0}

In this section we will prove the existence of solutions of the Neumann problem \eqref{Neumann-holes-eqn}. 
We first observe that by an argument similar to the proof of Proposition A.1 of \cite{BV1} we have the following lemma.

\begin{lemma}\label{Alexsandrov-reflection-lemma}
Let $n\ge 1$, $0<m<1$, and $0\le v_0(x)\in L^1(B_{5R}(x_0))$, $v_0\not\equiv 0$, such that $\text{supp }v_0\subset B_{R}(x_0)\subset\R^n$. Let $v$ be a weak solution of
\begin{equation}\label{Dirichlet-problem}
\left\{\begin{aligned}
v_t=&\Delta v^m\qquad\mbox{ in }B_{5R}(x_0)\times(0,T_{v_0})\\
v(x,t)=&0\qquad\quad\,\mbox{ on }\partial B_{5R}(x_0)\times(0,T_{v_0})\\
v(x,0)=&v_0(x)\quad\,\,\mbox{ in }B_{5R}(x_0)
\end{aligned}\right.
\end{equation}
where $T_{v_0}>0$ is the extinction time of $v$, then 
\begin{equation*}
v(x_1,t)\ge v(x_2,t)\quad\forall |x_1|\le R, 4R\le |x_2|\le 5R.
\end{equation*}
\end{lemma}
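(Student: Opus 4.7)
The plan is to carry out an Aleksandrov--Serrin reflection across the perpendicular bisector of the segment $[x_1,x_2]$. Assume without loss of generality that $x_0=0$, and fix $x_1,x_2$ with $|x_1|\le R$ and $4R\le|x_2|\le 5R$. Let $H$ be the perpendicular bisector of $[x_1,x_2]$, let $\Pi\colon\R^n\to\R^n$ denote the Euclidean reflection across $H$ (so $\Pi(x_1)=x_2$ and $\Pi$ fixes $H$ pointwise), and let $H^-$, $H^+$ be the open half-spaces containing $x_1$ and $x_2$ respectively.

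The first step is to verify two geometric facts. If $y\in\overline{B_R}$ then $|y-x_1|\le 2R$ and $|y-x_2|\ge|x_2|-|y|\ge 3R$, so $y\in\overline{H^-}$; in particular $\mathrm{supp}\,v_0\subset\overline{B_R}\subset\overline{H^-}$. Writing $\xi=(x_1+x_2)/2$ and $e=(x_2-x_1)/|x_2-x_1|$, the identity $\Pi(y)=y-2\langle y-\xi,e\rangle e$ yields by direct computation
\begin{equation*}
|\Pi(y)|^2=|y|^2-4\langle\xi,e\rangle\,\langle y-\xi,e\rangle\qquad\forall y\in\R^n.
\end{equation*}
Since $\langle\xi,e\rangle=(|x_2|^2-|x_1|^2)/(2|x_2-x_1|)\ge 15R^2/(2|x_2-x_1|)>0$ and $\langle y-\xi,e\rangle\ge 0$ whenever $y\in\overline{H^+}$, we deduce $|\Pi(y)|\le|y|$ for all $y\in\overline{H^+}$; equivalently $\Pi(\overline{B_{5R}}\cap\overline{H^+})\subset\overline{B_{5R}}$, so the half-cap reflects back into the ball.

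Next I set $\Omega=B_{5R}\cap H^+$ and define $w(x,t)=v(\Pi(x),t)$ on $\overline{\Omega}\times[0,T_{v_0})$; this is well defined by the preceding inclusion, and since $\Delta$ commutes with isometries $w$ is itself a weak solution of $w_t=\Delta w^m$ in $\Omega\times(0,T_{v_0})$. I then compare $w$ and $v$ on the parabolic boundary of $\Omega\times(0,T_{v_0})$. At $t=0$, every $x\in\Omega$ lies in $H^+$ and thus outside $\mathrm{supp}\,v_0$, so $v(x,0)=0\le v_0(\Pi(x))=w(x,0)$; on $\partial\Omega\cap H$ we have $\Pi(x)=x$, hence $w=v$; and on $\partial\Omega\cap\partial B_{5R}$ the Dirichlet condition gives $v=0\le w$. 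Thus $w\ge v$ on the entire parabolic boundary.

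A comparison principle for weak solutions of the fast diffusion equation with Dirichlet data on $\Omega$ then yields $w\ge v$ throughout $\Omega\times(0,T_{v_0})$; evaluating at $x=x_2\in\overline{\Omega}$ and using $\Pi(x_2)=x_1$ gives $v(x_1,t)=w(x_2,t)\ge v(x_2,t)$, which is the claim (the case $|x_2|=5R$ is trivial from $v\ge 0$ and the Dirichlet condition). The main obstacle is that Lemma~\ref{comparsion-lemma1} is formulated on smooth domains, while $\Omega$ has an $(n-2)$-dimensional edge along $\partial B_{5R}\cap H$; to handle this I would approximate $v_0$ by smooth, bounded, compactly supported initial data (still supported in $\overline{B_R}$), apply the classical comparison on an inner smooth approximation of $\Omega$, and pass to the limit using the $L^1$-contraction of the Dirichlet semigroup together with the continuous dependence of $v$ on its initial datum.
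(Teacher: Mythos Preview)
Your argument is correct and is precisely the Aleksandrov reflection argument that the paper invokes by citing Proposition~A.1 of \cite{BV1}; the geometric verification that $\overline{B_R}\subset\overline{H^-}$ and $\Pi(\overline{B_{5R}}\cap\overline{H^+})\subset\overline{B_{5R}}$, followed by comparison on the half-cap, is exactly the intended proof. Your remark about the corner along $\partial B_{5R}\cap H$ and the need to smooth the data and the domain before applying Lemma~\ref{comparsion-lemma1} is well taken and is the standard way this is handled.
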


By Lemma \ref{Alexsandrov-reflection-lemma} and an argument similar to the proof in section 1 of \cite{BV2} we have the following result.

\begin{lemma}\label{Dirichlet-soln-positive-lemma}(cf. (1.18) and (1.27) of \cite{BV2})
Let $n$, $m$, and $v_0$, $v$, and $T_{v_0}$, be as in Lemma \ref{Alexsandrov-reflection-lemma}.Then there exist constants $C_1>0$, $C_2>0$, such that
\begin{equation*}
T_{v_0}\ge C_1R^{2-n(1-m)}\left(\int_{B_R(x_0)}v_0\,dx\right)^{1-m}
\end{equation*}
and
\begin{equation*}
v^m(x,t)\ge C_2R^{2-n}\|v_0\|_{L^1(B_{R}(x_0))}T_{v_0}^{-\frac{1}{1-m}}t^{\frac{m}{1-m}}\quad \forall |x|\le R, t\in (0,t_{\ast}]
\end{equation*}
where
\begin{equation*}
t_{\ast}=\frac{C_1}{2}R^{2-n(1-m)}\left(\int_{B_R(x_0)}v_0\,dx\right)^{1-m}.
\end{equation*}
\end{lemma}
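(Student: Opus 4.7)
My plan is to follow the approach of Bonforte--V\'azquez, combining the Aleksandrov reflection of Lemma \ref{Alexsandrov-reflection-lemma} with a mass-concentration differential inequality and the Aronson--B\'enilan monotonicity. By the parabolic rescaling $\tilde v(y,\tau)=R^{2/(1-m)}v(x_0+Ry,R^2\tau)$, which preserves \eqref{fast-diffusion-eqn} and maps $B_{5R}(x_0)$ to $B_5(0)$, it suffices to treat the case $R=1$, $x_0=0$; the $R$-dependent factors in the conclusion then emerge from undoing this scaling.

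For the extinction time bound, the first step is a dimensional mass concentration inequality
\[
\int_{B_1}v(x,t)\,dx \;\ge\; c_n\int_{B_5}v(x,t)\,dx \;=:\; c_n M(t),\qquad t\in(0,T_{v_0}),
\]
obtained by iterating Lemma \ref{Alexsandrov-reflection-lemma} with translated reflection hyperplanes whose associated shells together cover $B_5\setminus B_1$. Since the outflow $M'(t)=\int_{\partial B_5}\partial_\nu v^m\,d\sigma$ is controlled from below by the Aleksandrov bound on $v^m$ along the outer shell (which by concentration is dominated by $M(t)^m$), one derives the differential inequality $M'(t)\ge -C M(t)^m$ for a constant $C$ depending only on $n$. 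Integrating from $0$ to $T_{v_0}$ gives $M(0)^{1-m}\le C(1-m)T_{v_0}$, which after undoing the scaling becomes $T_{v_0}\ge C_1 R^{2-n(1-m)}\|v_0\|_{L^1(B_R(x_0))}^{1-m}$.

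For the pointwise lower bound, the same ODE shows that at the threshold time $t_\ast$ the mass has not decayed too far: $M(t_\ast)\ge 2^{-1/(1-m)}M(0)$. Combined with the concentration estimate, this yields a uniform pointwise bound $v(x,t_\ast)\ge c\,M(0)$ on $B_1$. The Aronson--B\'enilan monotonicity \eqref{Aronson-Benilan-ineqn} asserts that $t\mapsto v(x,t)/t^{1/(1-m)}$ is non-increasing, hence
\[
v(x,t)\;\ge\; v(x,t_\ast)\left(\frac{t}{t_\ast}\right)^{\!1/(1-m)}\qquad \forall\, 0<t\le t_\ast.
\]
Raising to the $m$-th power and using $t_\ast\sim T_{v_0}\sim M(0)^{1-m}$ (in rescaled variables) to convert the factor $M(0)^m/t_\ast^{m/(1-m)}$ into $M(0)\,T_{v_0}^{-1/(1-m)}$ up to a dimensional constant, then undoing the scaling, yields the claimed bound $v^m(x,t)\ge C_2 R^{2-n}\|v_0\|_{L^1(B_R(x_0))}T_{v_0}^{-1/(1-m)}t^{m/(1-m)}$.

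The main obstacle is the iterated Aleksandrov argument producing the concentration constant $c_n$: Lemma \ref{Alexsandrov-reflection-lemma} as stated compares only $B_R(x_0)$ with the single outermost shell $B_{5R}(x_0)\setminus B_{4R}(x_0)$, and to obtain the uniform inequality $\int_{B_1}v\ge c_n\int_{B_5}v$ one must combine it with translated reflection axes (or chain it via a Harnack-type argument) to cover the intermediate region $B_4\setminus B_1$. The derivation of the ODE for $M(t)$, the use of Aronson--B\'enilan, and the final scaling bookkeeping are standard once the concentration is in hand.
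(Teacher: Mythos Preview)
The paper itself supplies no proof here; it invokes Lemma~\ref{Alexsandrov-reflection-lemma} and refers to Section~1 of \cite{BV2}. Your outline follows that reference in spirit, and the architecture---rescale, use Aleksandrov to concentrate mass, derive a mass differential inequality for the extinction-time bound, then Aronson--B\'enilan to propagate a pointwise bound backward from $t_\ast$---is the correct one.

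There is, however, a genuine gap at the sentence ``combined with the concentration estimate, this yields a uniform pointwise bound $v(x,t_\ast)\ge c\,M(0)$ on $B_1$.'' The concentration estimate $\int_{B_1}v\ge c_n\int_{B_5}v$ is purely integral and says nothing about $\inf_{B_1}v$. What Lemma~\ref{Alexsandrov-reflection-lemma} gives pointwise is
\[
\inf_{B_1}v(\cdot,t)\;\ge\;\sup_{B_5\setminus B_4}v(\cdot,t)\;\ge\;|B_5\setminus B_4|^{-1}\int_{B_5\setminus B_4}v(\cdot,t)\,dx,
\]
so the missing ingredient is a \emph{lower} bound on the mass in the \emph{outer} annulus $B_5\setminus B_4$ at time $t_\ast$; your concentration lemma, by contrast, bounds exterior mass from \emph{above}. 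In \cite{BV2} this is closed by a separate weighted estimate adapted to the boundary region, and the mass ODE itself is run with a weighted integral $\int v\,\varphi_1\,dx$ (with $\varphi_1$ the first Dirichlet eigenfunction of $B_5$) rather than the raw boundary flux, which sidesteps the need to bound $\partial_\nu v^m$ on $\partial B_5$ directly. Without a lower bound on the annular mass the Aronson--B\'enilan step has no pointwise input to propagate, and the second inequality of the lemma remains unproved.
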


\begin{lemma}\label{positivity-lemma}
Let $n\ge 3$, $0<m\leq\frac{n-2}{n}$ and $T>t_0>0$. Suppose $0\le u\in C(\Omega\times(t_0,T])$
satisfies \eqref{fast-diffusion-eqn} in $\mathcal{D}(\Omega\times (t_0,T))$ and
\begin{equation}\label{u-integral-positive}
\int_{\Omega}u(x,T)\,dx>0.
\end{equation}
Then
\begin{equation*}
u(x,T)>0 \qquad \forall x\in\Omega.
\end{equation*}
\end{lemma}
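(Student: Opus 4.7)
The plan is to propagate strict positivity from an initial quantum (provided by continuity near $T$) to any point of $\Omega$ before time $T$, by iterated Dirichlet comparison with Lemma \ref{Dirichlet-soln-positive-lemma}. Since $u\in C(\Omega\times(t_0,T])$ and $\int_\Omega u(x,T)\,dx>0$, there exists $y_0\in\Omega$ with $u(y_0,T)>0$, and joint continuity yields $r_0>0$ with $B_{5r_0}(y_0)\subset\Omega$, $\tau_0\in(0,T-t_0)$, and $c_0>0$ such that
\begin{equation*}
u(x,t)\ge c_0\qquad\forall(x,t)\in\overline{B_{r_0}(y_0)}\times[T-\tau_0,T].
\end{equation*}

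Fix any $x_1\in\Omega$ and a continuous path $\gamma\subset\Omega$ from $y_0$ to $x_1$. Choose $\rho\in(0,\min(r_0,\mbox{dist}(\gamma,\partial\Omega)/5))$ together with points $y_0,y_1,\dots,y_N=x_1$ along $\gamma$ satisfying $|y_{i+1}-y_i|<\rho/2$, so that $B_{5\rho}(y_i)\subset\Omega$ and $B_{\rho/2}(y_{i+1})\subset B_\rho(y_i)$ for every $i$. I claim inductively that there exist $s_i\in[T-\tau_0,T)$ and $c_i>0$ with $u(\cdot,s_i)\ge c_i$ on $\overline{B_\rho(y_i)}$. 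Given the $i$th step, set $v_{0,i}=c_i\chi_{B_{\rho/2}(y_{i+1})}$, which is supported in $B_\rho(y_{i+1})$ and satisfies $v_{0,i}\le u(\cdot,s_i)$ on $B_{5\rho}(y_{i+1})$. Let $v_i$ be the Dirichlet solution of \eqref{Dirichlet-problem} on $B_{5\rho}(y_{i+1})$ with initial datum $v_{0,i}$. Since $u\ge 0=v_i$ on $\partial B_{5\rho}(y_{i+1})$ and $u(\cdot,s_i)\ge v_{0,i}$, Lemma \ref{comparsion-lemma1} (pure Dirichlet case $\Sigma_1=\emptyset$) gives $u(x,s_i+t)\ge v_i(x,t)$ on $B_{5\rho}(y_{i+1})\times[0,\min(T-s_i,T_{v_{0,i}}))$, and Lemma \ref{Dirichlet-soln-positive-lemma} then produces $t_{*,i}>0$ and $c_{i+1}>0$ with $v_i(\cdot,t_{*,i}/2)\ge c_{i+1}$ on $\overline{B_\rho(y_{i+1})}$, so we may take $s_{i+1}=s_i+t_{*,i}/2$.

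After $N$ iterations one has $u(\cdot,s_N)\ge c_N>0$ on $\overline{B_\rho(x_1)}$, and one further application of the same comparison centred at $x_1$ covers the remaining interval $[s_N,T]$ and yields $u(x_1,T)>0$. The main obstacle is scheduling: the lower-bound constants $c_i$ may deteriorate along the chain and, as a computation from Lemma \ref{Dirichlet-soln-positive-lemma} shows, each time increment $t_{*,i}$ scales like $\rho^2 c_i^{1-m}$, so the total advance $\sum_i t_{*,i}$ must fit inside $\tau_0$. Shrinking $\rho$ makes each individual $t_{*,i}$ arbitrarily small and controls the cumulative time, while finiteness of the chain preserves $c_N>0$; arbitrariness of $x_1\in\Omega$ then completes the proof.
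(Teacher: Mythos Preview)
Your chain-propagation idea is sound and genuinely different from the paper's proof. The paper argues by contradiction: it sets $D(T)=\{x:u(x,T)>0\}$, picks $x_0\in\Omega\cap\partial D(T)$ (where $u(x_0,T)=0$), observes that $\int_{B_{R/2}(x_0)}u(\cdot,T)\,dx>0$, pushes this mass \emph{backward} in time by the Herrero--Pierre estimate (Lemma~3.1 of \cite{HP}), and then runs a \emph{single} Dirichlet comparison on $B_{5R}(x_0)$ from a slightly earlier time back up to $T$, forcing $u(x_0,T)>0$. Because only one ball and one comparison are needed, no scheduling issue ever arises.

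Your approach, by contrast, is constructive but pays for it with the scheduling problem you identified. The justification you give for it is not quite closed: shrinking $\rho$ also forces $N\sim L/\rho$ to grow, so ``each $t_{*,i}$ small'' does not automatically yield ``$\sum_i t_{*,i}<\tau_0$'' without some uniform control on the $c_i$'s. Lemma~\ref{Dirichlet-soln-positive-lemma} only gives a \emph{lower} bound on the extinction time $T_{v_{0,i}}$, whereas the recursion $c_{i+1}\sim\bigl(\rho^{2-n}\|v_{0,i}\|_{L^1}T_{v_{0,i}}^{-1/(1-m)}t_{*,i}^{m/(1-m)}\bigr)^{1/m}$ needs an \emph{upper} bound on $T_{v_{0,i}}$ to be made quantitative. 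One clean fix is to cap the constants, redefining $c_{i+1}:=\min\{c_0,\text{bound from Lemma~\ref{Dirichlet-soln-positive-lemma}}\}$; then $t_{*,i}\le C\rho^2c_0^{1-m}$ uniformly and the cumulative time is $\lesssim N\rho^2\sim\rho\to0$. Alternatively, the comparison $T_{v_{0,i}}\le C\rho^2 c_i^{1-m}$ (from the spatially constant supersolution on $B_{5\rho}$) yields $c_{i+1}\ge\beta c_i$ with $\beta$ independent of $\rho$, which also suffices. Either way the argument closes, but this step should be made explicit; as written, ``shrinking $\rho$ controls the cumulative time'' is an assertion rather than a proof. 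The paper's boundary-point contradiction sidesteps all of this at the cost of invoking the Herrero--Pierre mass estimate.
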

\begin{proof}
Let
\begin{equation*}
D(T)=\left\{x\in\Omega :u(x,T)>0\right\}.
\end{equation*}
Since by \eqref{u-integral-positive} there exists a point $x(T)\in \Omega$ such that $u(x(T),T)>0$, $D(T)\ne\phi$. Suppose that
$D(T)\neq\Omega$. Then there exist a point $x_0\in\Omega\cap\1 D(T)$ such that
\begin{equation}\label{u=0-at-x0}
u(x_0,T)=0.
\end{equation}
We choose a constant $R>0$ such that $B_{5R}(x_0)\subset\Omega$. We claim that
\begin{equation}\label{eq-properties-from-assumption-of-u-epsilon-at-T}
\int_{B_{R/2}(x_0)}u(x,T)\,dx>0.
\end{equation}
Suppose not. Then
\begin{equation*}
\int_{B_{R/2}(x_0)}u(x,T)\,dx=0\quad\Rightarrow\quad u(x,T)=0\quad\forall |x-x_0|\le\frac{R}{2}
\end{equation*}
which contradicts the fact that $x_0\in\partial D(T)$. Hence \eqref{eq-properties-from-assumption-of-u-epsilon-at-T} holds.
Let $\psi\in C_0^{\infty}(B_{5R}(x_0))$, $0\le\psi\le 1$, be such that $\psi(x)=1$ 
for all $x\in B_{R/2}(x_0)$ and $\psi(x)=0$ for all $x\in B_{5R}(x_0)\bs B_R(x_0)$. By the proof of Lemma 3.1 of \cite{HP}
there exist constants $\alpha>1$ and $C_R>0$ such that
\begin{align}\label{eq-the-difference-between-1-m-norm-and-T-s234}
&\left|\frac{\1}{\1 t}\left(\int_{B_{5R}(x_0)}u\psi^{\alpha}\,dx\right)\right|\le C_R\left(\int_{B_{5R}(x_0)}u\psi^{\alpha}\,dx\right)^m\quad\forall 0<t<T\notag\\
\Rightarrow\quad&\left|\left(\int_{B_{5R}(x_0)}u(x,T)\psi^{\alpha}(x)\,dx\right)^{1-m}-\left(\int_{B_{5R}(x_0)}u(x,s)\psi^{\alpha}(x)\,dx\right)^{1-m}\right|\le (1-m)C_R(T-s) \quad \forall T>s>0.
\end{align}
Let
\begin{equation*}
\3_1=\frac{1-2^{m-1}}{(1-m)C_{R_1}}\left(\int_{B_{5R}(x_0)}u(x,T)\psi^{\alpha}(x)\,dx\right)^{1-m}.
\end{equation*}
By \eqref{eq-the-difference-between-1-m-norm-and-T-s234} for any $0<T-s\leq\3_1$
\begin{align}\label{eq-aligned-uniform-positivity-of-mass-of-u-near-T-0--}
\int_{B_R(x_0)}u(x,s)\,dx\ge&\int_{B_{5R}(x_0)}u(x,s)\psi^{\alpha}(x)\,dx\notag\\
\ge&\left[\left(\int_{B_{5R}(x_0)}u(x,T)\psi^{\alpha}(x)\,dx\right)^{1-m}-(1-m)C_R(T-s)\right]^{\frac{1}{1-m}}\notag\\
\ge&\frac{1}{2}\int_{B_{5R}(x_0)}u(x,T)\psi^{\alpha}(x)\,dx\notag\\
\ge&\frac{1}{2}\int_{B_{R/2}(x_0)}u(x,T)\,dx.
\end{align}
Let $v_{0,\tau}(x)=u(x,T-\tau)\chi_{B_R(x_0)}$. By the discussion on P.537 of \cite{BV2} there exists a unique weak minimal solution $v^{\tau}$ of \eqref{Dirichlet-problem} with initial value $v_{0,\tau}$. Let $T_{v_{0,\tau}}$ be the extinction time of $v^{\tau}$. Then by \eqref{eq-properties-from-assumption-of-u-epsilon-at-T}, \eqref{eq-aligned-uniform-positivity-of-mass-of-u-near-T-0--} and Lemma \ref{Dirichlet-soln-positive-lemma},
\begin{equation*}
T_{v_{0,\tau}}\geq c_0>0\quad\forall 0<\tau\le\3_1.
\end{equation*}
where 
\begin{equation*}
c_0=C_1R^{2-n(1-m)}\left(\frac{1}{2}\int_{B_{R/2}(x_0)}u(x,T)\,dx\right)^{1-m} \qquad \mbox{and} \qquad  \mbox{$C_1>0$ is as in Lemma \ref{Dirichlet-soln-positive-lemma}.}
\end{equation*}
Let $\tau_1=\min (c_0,\3_1)/2$. By Lemma \ref{Dirichlet-soln-positive-lemma} there exists a constant $c_1>0$ such that
\begin{equation}\label{min-soln-bded-below}
v^{\tau_1}(x,\tau_1)\ge c_1\quad\forall |x|\le R.
\end{equation}
Since $v^{\tau_1}$ is the unique weak minimal solution  of \eqref{Dirichlet-problem} with initial value $v_{0,\tau_1}$,
by the maximum principle,
\begin{equation}\label{u-lower-bd}
u^m(x,T)\ge (v^{\tau_1})^m(x,\tau_1)>0\quad\forall |x|\le R.
\end{equation}
By \eqref{min-soln-bded-below} and \eqref{u-lower-bd} we get $u^m(x_0,T)\ge c_1>0$. This contradicts \eqref{u=0-at-x0}. Hence $D(T)=\Omega$ and the lemma follows.
\end{proof}

\begin{lemma}\label{Aronson-Benilan-ineqn-lemma}
Let $n\ge 1$, $0<m<1$, $0<\delta<\delta_0$, $0<u_0\in C^2(\2{\Omega}_{\delta})$.
Let $f\in C^{\infty}(\1\Omega\times [0,T))$ and $g_i\in C^{\infty}(\1 B_{\delta}(a_i)\times [0,T))$ for all $i=1,\cdots,i_0$ be such that $f$, $g_i$, are nonnegative monotone decreasing functions of $t\in (0,T)$, and $f\le M$, $g_i\le M$, for all $i=1,\cdots,i_0$ and some constant $M>0$. Suppose $u\in C^{2,1}(\2{\Omega}_{\delta}\times [0,T))$ is a positive solution of \eqref{Neumann-holes-eqn}. Then $u$ satisfies \eqref{Aronson-Benilan-ineqn} in $\Omega_{\delta}\times (0,T)$.
\end{lemma}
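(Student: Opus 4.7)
The idea is to apply the parabolic maximum principle to the auxiliary function
$$G := u^{m-1}\left(u_t-\frac{u}{(1-m)t}\right)=\frac{1}{m}(u^m)_t - \frac{u^m}{(1-m)t}.$$
Since $u>0$, establishing $G\le 0$ throughout $\Omega_\delta\times(0,T)$ is equivalent to \eqref{Aronson-Benilan-ineqn}.

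\textbf{Step 1 (evolution equation for $G$).} Set $w=u^m$, so that $u_t=\Delta w$ and $w_t=mu^{m-1}\Delta w$. Differentiating the second identity in $t$ and using $u_t=\Delta w$ yields $w_{tt}=m(m-1)u^{m-2}(\Delta w)^2+mu^{m-1}\Delta w_t$. The definition of $G$ gives the algebraic identities $\Delta w=Gu^{1-m}+u/((1-m)t)$ and $\Delta w_t=m\Delta G+m\Delta w/((1-m)t)$. Plugging these into $G_t=w_{tt}/m-w_t/((1-m)t)+w/((1-m)t^2)$ and expanding $(m-1)u^{m-2}(\Delta w)^2$, I find that the $w/t^2$ and $w_t/t$ contributions cancel completely, leaving the clean semilinear equation
$$G_t = mu^{m-1}\Delta G-\frac{2G}{t}-\frac{(1-m)G^2}{u^m}\quad\mbox{in }\Omega_\delta\times(0,T).$$

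\textbf{Step 2 (parabolic boundary data).} Because $u\in C^{2,1}(\overline{\Omega}_\delta\times[0,T))$ is positive, $u^{m-1}u_t$ is bounded on $\overline{\Omega}_\delta\times[0,T')$ for any $T'<T$, while $u^m/((1-m)t)\to\infty$ as $t\to 0^+$; hence $G(\cdot,t)\to -\infty$ uniformly on $\overline{\Omega}_\delta$ as $t\to 0^+$. Differentiating the Neumann condition gives
$$\frac{\partial G}{\partial\nu}=\frac{1}{m}\frac{\partial f}{\partial t}-\frac{f}{(1-m)t}\le 0\quad\mbox{on }\partial\Omega\times(0,T),$$
and the analogous bound with $g_i$ in place of $f$ on each $\partial B_\delta(a_i)\times(0,T)$, since both $f$ and $g_i$ are nonnegative and monotone decreasing in $t$.

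\textbf{Step 3 (maximum principle).} Suppose for contradiction that $G>0$ somewhere. Together with Step 2, this forces $G$ to attain a positive maximum at some $(x_0,t_0)\in\overline{\Omega}_\delta\times(0,T)$ with $t_0>0$. An interior maximum is impossible: at such a point $G_t\ge 0$ and $\Delta G\le 0$, while $G>0$ and the equation yield $G_t=mu^{m-1}\Delta G-2G/t_0-(1-m)G^2/u^m<0$. A lateral-boundary maximum is ruled out by Hopf's parabolic boundary point lemma, applied to the operator $\partial_t-mu^{m-1}\Delta+b$ with $b=2/t+(1-m)G/u^m\ge 0$: its coefficients are bounded near $(x_0,t_0)$ because $u$ is bounded above and bounded away from zero there, so $\partial G/\partial\nu>0$ strictly at the maximum, contradicting Step 2. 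Therefore $G\le 0$, which is \eqref{Aronson-Benilan-ineqn}. The principal technical obstacle is the bookkeeping in Step 1 that produces the clean evolution equation for $G$; once this equation is in hand, both the interior and boundary parts of the maximum principle argument are standard.
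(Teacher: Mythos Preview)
Your proof is correct and follows the same maximum-principle strategy as the paper, but with a different (and somewhat cleaner) choice of auxiliary function. The paper works with $q=u_t/u$ and $\overline{q}=q-\tfrac{1}{(1-m)(a+t)}$, where the positive constant $a$ is chosen so that $\overline{q}(\cdot,0)\le 0$; the resulting equation for $\overline{q}$ carries a first-order term $2mu^{m-2}\nabla u\cdot\nabla\overline{q}$, and the boundary contradiction goes through the identity $\partial_\nu\overline{q}=f_t/(mu^m)-\overline{q}f/u^m-f/((1-m)(a+t)u^m)$. Your quantity is essentially $G=u^m\bigl(q-\tfrac{1}{(1-m)t}\bigr)$: multiplying by $u^m$ kills the drift term and yields the clean equation $G_t=mu^{m-1}\Delta G-2G/t-(1-m)G^2/u^m$, while working with $1/t$ instead of $1/(a+t)$ trades the finite initial inequality $\overline{q}(\cdot,0)\le 0$ for the observation $G\to-\infty$ as $t\to 0^+$. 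Your Neumann computation $\partial_\nu G=f_t/m-f/((1-m)t)\le 0$ is likewise more direct than the paper's. Both routes close via Hopf's lemma at a hypothetical positive boundary maximum; the underlying idea is the same, and each version is a legitimate proof of the Aronson--B\'enilan inequality under the stated Neumann conditions.
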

\begin{proof}
Let
\begin{equation*}
q=\frac{u_t}{u}, \qquad \3=\min_{\overline{\Omega}_{\delta}}u_0, \qquad a=\frac{\3}{(1-m)\left(\|\La u_0^m\|_{L^{\infty}(\Omega_{\delta})}+1\right)}
\end{equation*}
and
\begin{equation*}
\overline{q}=q-\frac{1}{(1-m)(a+t)}.
\end{equation*}
By direct computation, 
\begin{equation}\label{q-eqn}
q_t=mu^{m-1}\La q+2mu^{m-2}\nabla u\cdot\nabla q+(m-1)q^2 \qquad \mbox{in $\Omega_{\delta}\times(0,T)$}.
\end{equation}
Then $\2{q}$ satisfies 
\begin{equation}\label{q-bar-eqn}
\begin{cases}
\begin{aligned}
&\overline{q}_t=mu^{m-1}\La\overline{q}+2mu^{m-2}\nabla u\cdot\nabla\overline{q}-(1-m)\overline{q}\left(\overline{q}+\frac{2}{(1-m)(a+t)}\right)\qquad \mbox{in $\Omega_{\delta}\times(0,T)$}\\
&\overline{q}(x,0)\leq 0 \qquad \qquad \qquad \qquad \mbox{on $\Omega_{\delta}$}.
\end{aligned}
\end{cases}
\end{equation}
Since
\begin{align}
&f_t=\frac{\partial}{\partial t}\left(\frac{\partial u^m}{\partial \nu}\right)=mu^{m-1}u_{\nu t}-m(1-m)u^{m-2}u_{\nu}u_t\quad
\quad\forall (x,t)\in\partial\Omega\times(0,T)\notag\\
&\qquad \Rightarrow\quad u_{\nu t}=\frac{f_tu^{1-m}}{m}+(1-m)\frac{u_{\nu}u_t}{u}\qquad\qquad\qquad\,
\forall (x,t)\in\partial\Omega\times(0,T),
\end{align}
we have
\begin{align}\label{eq-of-overline-q-sub-nu-on-partial-Omega-with-f-t-and-u-and-a}
\frac{\partial q}{\partial\nu}&=\frac{u_{\nu t}}{u}-\frac{u_{\nu}u_t}{u^2}=\frac{f_t}{mu^{m}}-\frac{qf}{u^m}\qquad\quad\,\mbox{ on }\partial\Omega\times(0,T)\notag\\
&=\frac{f_t}{mu^{m}}-\frac{\2{q}f}{u^m}-\frac{f}{(1-m)(a+t)u^m}\quad \mbox{on $\partial\Omega\times(0,T)$}.
\end{align}
Similarly
\begin{equation*}
\frac{\partial q}{\partial\nu}=\frac{g_{i,t}}{mu^{m}}-\frac{\2{q}g_i}{u^m}-\frac{g_i}{(1-m)(a+t)u^m}\qquad \mbox{ on }\partial B_{\delta}(a_i)\times(0,T)\quad\forall i=1,\cdots,i_0.
\end{equation*}
Let $0<T_1<T$ and suppose $\overline{q}$ attains a positive maximum on $\overline{\Omega}_{\delta}\times(0,T_1]$ at $(x_0,t_0)$
for some $x_0\in\2{\Omega}_{\delta}$, $0\le t_0\le T_1$. Suppose $x_0\in\Omega_{\delta}$ and $t_0>0$. Then
\begin{equation}\label{q-bar-ineqns-at-max}
\overline{q}_t\geq 0, \qquad \nabla\overline{q}=0 \qquad \mbox{and} \qquad \La\overline{q}\leq 0  \qquad \mbox{at $(x_0,t_0)$}.
\end{equation}
Hence by \eqref{q-bar-eqn} and \eqref{q-bar-ineqns-at-max},
\begin{align}
0\le\overline{q}_t=&mu^{m-1}\La\overline{q}+2mu^{m-2}\nabla u\cdot\nabla\overline{q}-(1-m)\overline{q}\left(\overline{q}+\frac{2}{(1+m)(a+t)}\right)\notag\\
\le&-(1-m)\overline{q}\left(\overline{q}+\frac{2}{(1+m)(a+t)}\right)<0\qquad \qquad \mbox{at $(x_0,t_0)$}.\notag
\end{align}
Contradiction arises. Hence either $x_0\in\partial\Omega_{\delta}$ or $t_0=0$. Suppose $x_0\in\partial\Omega_{\delta}$ and $t_0>0$. 
Without loss of generality we may assume that $x_0\in\partial\Omega$. By the strong maximum principle, 
\begin{equation}\label{q-bar-bdary-derivative-+ve}
\frac{\1\2{q}}{\1\nu}(x_0,t_0)>0.
\end{equation}
Then by \eqref{eq-of-overline-q-sub-nu-on-partial-Omega-with-f-t-and-u-and-a} and \eqref{q-bar-bdary-derivative-+ve},
\begin{align*}
&0<\frac{f_t}{mu^{m}}-\frac{\2{q}f}{u^m}-\frac{f}{(1-m)(a+t)u^m}\le-\frac{\2{q}f}{u^m}\quad\mbox{ at }(x_0,t_0)\\
&\Rightarrow\quad \2{q}f<0\quad\mbox{ at }(x_0,t_0)
\end{align*}
Since $\2{q}(x_0,t_0)>0$ and $f(x_0,t_0)\ge 0$, contradiction arises. Hence $t_0=0$. Since $\overline{q}(x,0)\leq 0$ on 
$\Omega_{\delta}$,  
\begin{align*}
&\overline{q}(x,t)\le 0\quad\forall x\in\Omega_{\delta}, 0<t<T_1\notag\\
&\Rightarrow\quad\overline{q}(x,t)\le 0 \quad\forall x\in\Omega_{\delta}, 0<t<T\quad\mbox{ as }T_1\to T\\
&\Rightarrow\quad u_t\le\frac{u}{(1-m)(a+t)}\le\frac{u}{(1-m)t}\quad\mbox{ in }\Omega_{\delta}\times (0,T)
\end{align*}
and the lemma follows.
\end{proof}

We are now ready for the proof of Theorem \ref{Neumann-holes-problem-existence-thm}.

\begin{proof}[Proof of Theorem \ref{Neumann-holes-problem-existence-thm}]
By Lemma \ref{comparsion-lemma1} the solution of \eqref{Neumann-holes-eqn} is unique. Hence it remains to prove the existence of solution of \eqref{Neumann-holes-eqn}. We will use a modification of the proof of Lemma 2.1 of \cite{Hs2} to prove this theorem.  
We divide the proof into three cases.\\
\textbf{Case 1}: $0<u_0\in C^{\infty}(\overline{\Omega}_{\delta})$, $f\in C^{\infty}(\1\Omega\times [0,\infty))$, $g_i\in C^{\infty}(\1 B_{\delta}(a_i)\times [0,\infty))$, such that $g_{i}(x,t)=\alpha_i$ for all $(x,t)\in\partial B_{\delta}(a_i)\times[0,\delta')$, $i=1,\cdots,i_0$, and
$f(x,t)=0$  for all $(x,t)\in\partial \Omega\times[0,\delta')$ for some constants $\delta'>0$ and $\alpha_1,\cdots,\alpha_{i_0}\in\R^+$, respectively.

Let $\3_0=\inf_{\overline{\Omega}_{\delta}}u_0$. Then $\3_0>0$. We first choose a sequence of functions $\{\phi_j\}_{j=1}^{\infty}\subset C^{\infty}(\Omega)$, $0\le\phi_j\le 1$ for all $j\in\mathbb{Z}^+$, such that
\begin{equation*}
\phi_j(x)=\begin{cases}
1\qquad \mbox{if $\textbf{dist}(x,\partial\Omega)\leq \frac{\delta_0-\delta}{3(j+1)}$}\\
0\qquad \mbox{if $\textbf{dist}(x,\partial\Omega)> \frac{\delta_0-\delta}{3j}$}
\end{cases}
\end{equation*}
for all $j\in\mathbb{Z}^+$ and a sequence of functions $\{\psi_j\}_{j=1}^{\infty}\subset C_0^{\infty}(\R^N)$, $0\le\psi_j\le 1$ for all $j\in\mathbb{Z}^+$, such that
\begin{equation*}
\psi_j(x)=\begin{cases}
1\qquad\mbox{ if }|x|\le\delta+\frac{\delta_0-\delta}{3(j+1)}\\
0\qquad\mbox{ if }|x|\ge\delta+\frac{\delta_0-\delta}{3j}
\end{cases}
\end{equation*}
for all $j\in\mathbb{Z}^+$. Then $0\le\phi_{j+1}\le\phi_{j}\leq 1$ and $0\le\psi_{j+1}\le\psi_{j}\le 1$ for any $j\in\Z^+$. For any $j\in\mathbb{Z}^+$ and $x\in\Omega$ let
\begin{equation*}
u_{0,j}(x)=u_{0}(x)\left(1-\phi_j(x)-\sum_{i=1}^{i_0}\psi_j(x-a_i)\right)+\delta^{\frac{2}{m}}\sum_{i=1}^{i_0}\frac{\alpha_i^{\frac{1}{m}}}{|x-a_i|^{\frac{1}{m}}}\cdot\psi_{j}(x-a_i)+\3_0\phi_j
\end{equation*}
and
\begin{equation*}
\3_1=\min\left\{\3_0,\left(\frac{\delta^2\alpha}{\textbf{diam}\,\Omega}\right)^{\frac{1}{m}}\right\}, \qquad \mbox{where $\alpha=\min\left\{\alpha_1,\cdots,\alpha_{i_0}\right\}$}.
\end{equation*}
Then
\begin{equation}\label{u0j-upper-lower-bd}
\3_1\le u_{0,j} \le u_{0}+C_1 \quad\mbox{ in }\Omega_{\delta}\quad \forall j\in\mathbb{Z}^+\qquad\mbox{and}\qquad\|u_{0,j}-u_0\|_{L^{1}(\Omega_{\delta})}\to 0\quad\mbox{ as }j\to\infty
\end{equation}
for some constant $C_1>0$ depending on $m$, $\alpha_1,\cdots,\alpha_{i_0}$, $\delta$ and
\begin{equation*}
\begin{cases}
\begin{aligned}
\frac{\partial u_{0,j}^m}{\partial\nu}=&0\quad\,\mbox{ on }\partial\Omega\quad\forall j\in\Z^+\\
\frac{\partial u_{0,j}^m}{\partial\nu}=&\alpha_i\quad\mbox{ on }\partial B_{\delta}(a_i)\quad\forall
j\in\Z^+,i=1,2,\cdots,i_0.
\end{aligned}
\end{cases}
\end{equation*}
We will now use an argument similar to the proof of Theorem 3.5 of \cite{Hu1} to prove the existence of a solution of 
\eqref{Neumann-holes-eqn}  in $\Omega_{\delta}\times (0,\infty)$ with initial value $u_{0,j}$. Let $n_1=\3_1^m$ and 
$n_2=(\|u_0\|_{L^{\infty}}+C_1)^m$. We choose a monotone decreasing function $H\in C^{\infty}(\R)$, $H>0$, such that
$H(s)=ms^{1-\frac{1}{m}}$ for $n_1/2\le s\le 2n_2$, $H(s)=m(n_1/4)^{1-\frac{1}{m}}$ for $s\le n_1/4$, $H(s)=m(4n_1/4)^{1-\frac{1}{m}}$ for $s\ge 4n_2$. Then by standard theory for non-degerenate parabolic equation \cite{LSU} such that the problem
\begin{equation*}
\left\{\begin{aligned}
v_t=&H(v)\Delta v\quad\,\,\,\mbox{ in }\Omega_{\delta}\times (0,\infty)\\
\frac{\1 v}{\1\nu}=&f\qquad\qquad\mbox{ on }\1\Omega\times (0,\infty)\\
\frac{\1 v}{\1\nu}=&g_i\qquad\qquad\mbox{ on }\1 B_{\delta}(a_i)\times (0,\infty)\quad\forall i=1,\cdots,i_0\\
v(x,0)=&u_{0,j}(x,0)^m\quad\mbox{ in }\Omega_{\delta}
\end{aligned}\right.
\end{equation*}
has a classical solution $v_j\in C^{2,1}(\overline{\Omega}_{\delta}\times (0,\infty))$. By the maximum principle for non-degerenate parabolic equation \cite{LSU}, $n_1\le v_j\le n_2$ on $\overline{\Omega}_{\delta}\times (0,\infty)$. Hence $H(v_j)=mv_j^{1-\frac{1}{m}}$ and thus the function $u_j=v_j^{\frac{1}{m}}\in C^{2,1}(\overline{\Omega}_{\delta}\times (0,\infty))$ is a solution of \eqref{Neumann-holes-eqn}  in $\Omega_{\delta}\times (0,\infty)$ with initial value $u_{0,j}$ such that
\begin{equation}\label{uj-lower-bd}
u_j\ge\3_1\quad\mbox{ in }\overline{\Omega}_{\delta}\times[0,\infty).
\end{equation}

Since
\begin{equation*}
\frac{\1}{\partial t}\left(\int_{\Omega_{\delta}}u_j\,dx\right)=\int_{\Omega_{\delta}}\La u_j^m\,dx
=\int_{\partial\Omega}f\,d\sigma+\sum_{i=1}^{i_0}\int_{\partial B_{\delta}(a_i)}g_i\,d\sigma\quad \forall t>0, j\in\Z^+,
\end{equation*}
integrating over $t$ we have
\begin{equation}\label{eq-integration-form-from-main-construction-equation-1-with-j-1}
\int_{\Omega_{\delta}}u_j(x,t)\,dx=\int_{\Omega_{\delta}}u_{0,j}\,dx+\int_0^t\int_{\partial\Omega}fd\sigma ds+\sum_{i=1}^{i_0}\int_0^t\int_{\partial B_{\delta}(a_i)}g_i\,d\sigma ds \qquad t>0.
\end{equation}

We will now show that $u_j$ converges to a solution $u$ of \eqref{Neumann-holes-eqn} as $j\to \infty$. Let $t_2>t_1>0$. Then by \eqref{u0j-upper-lower-bd} and  Proposition \ref{Lp-L-infty-thm-2} there exists a constant $C_2>0$ such that
\begin{equation}\label{uj-uniform-upper-bd}
u_j\le C_2 \qquad \forall x\in\overline{\Omega}_{\delta},t_1\le t\le t_2,j\in\Z^+.
\end{equation}
Hence by \eqref{uj-lower-bd} and \eqref{uj-uniform-upper-bd} the equation \eqref{fast-diffusion-eqn} for the sequence $\{u_j\}_{j=1}^{\infty}$ is uniformly parabolic on $\overline{\Omega}_{\delta}\times [t_1,t_2]$ for any $t_2>t_1>0$. Thus by the Schauder estimates \cite{LSU} (Theorem 3.1 and Theorem 5.4 in chapter 5 of \cite{LSU}), for any $t_2>t_1>0$,
\begin{equation*}
\sup_{\overline{\Omega}_{\delta}\times[t_1,t_2]}(|\nabla u_j|+|\1^2_{x_kx_l}u_j|+|u_{j,t}|)\le C_3\quad\forall k,l=1,\cdots,n, j\in\Z^+
\end{equation*}
and
\begin{equation*}
\sup_{\substack{y,y'\in\overline{\Omega}_{\delta}\\s,s'\in [t_1,t_2]}}\frac{|\1^2_{x_kx_l}u_j(y,s)-\1^2_{x_kx_l}u_j(y',s')|}{|y-y'|^{\alpha}}+\sup_{\substack{y,y'\in\overline{\Omega}_{\delta}\\s,s'\in [t_1,t_2]}}\frac{|u_{j,t}(y,s)-u_{j,t}(y',s')|}{|s-s'|^{\frac{\alpha}{2}}}\le C_4\quad\forall k,l=1,\cdots,n, j\in\Z^+
\end{equation*}
for some constants $C_3>0$, $C_4>0$, $0<\alpha<1$.
Hence the sequence $\{u_{j}\}_{j=1}^{\infty}$ is uniformly H\"older continuous in $C^{2,1}(\overline{\Omega}_{\delta}\times[t_1,t_2])$ for any $t_2>t_1>0$. 
By the Ascoli Theorem and a diagonalization argument the sequence $\{u_{j}\}_{j=1}^{\infty}$ has a subsequence which we may assume without loss of generality to be the sequence itself that converges uniformly on every compact subset of $\overline{\Omega}_{\delta}\times (0,\infty)$ to a solution $u\in C^{2,1}(\overline{\Omega}_{\delta}\times(0,\infty))$ of \eqref{fast-diffusion-eqn} in $\overline{\Omega}_{\delta}\times(0,\infty)$ which satisfies
\begin{equation*}
\frac{\1 u^m}{\1\nu}=f\quad\mbox{ on }\1\Omega\times (0,\infty)\qquad\mbox{ and }\qquad\frac{\1 u^m}{\1\nu}=g_i\quad\mbox{ on }\1 B_{\delta}(a_i)\times (0,\infty)\quad\forall i=1,\dots,i_0
\end{equation*}
as $j\to\infty$. Letting $j\to\infty$ in \eqref{eq-integration-form-from-main-construction-equation-1-with-j-1}, $u$ satisfies \eqref{mass-formulua-holes}.

It remains to show that $u$ has initial value $u_0$. For any $\psi\in C^{\infty}_0(\Omega_{\delta})$, by \eqref{eq-integration-form-from-main-construction-equation-1-with-j-1},
\begin{equation*}
\begin{aligned}
&\left|\int_{\Omega_{\delta}}u_j(x,t)\psi(x)\,dx-\int_{\Omega_{\delta}}u_{0,j}(x)\psi(x)\,dx\right|\\
&\qquad \qquad =\left|\int_0^t\int_{\Omega_{\delta}}u_{j,t}(x,s)\psi(x)\,dxds\right|=\left|\int_0^t\int_{\Omega_{\delta}}u_{j}^m(x,s)\La\psi(x)\,dxds\right|\\
&\qquad \qquad \le\left|\Omega\right|^{1-m}\left\|\La\psi\right\|_{L^{\infty}}\int_{0}^{t}\left(\int_{\Omega_{\delta}}u_{j}(x,s)\,dx)\right)^{m}\,ds\\
&\qquad \qquad \le\left|\Omega\right|^{1-m}\left\|\La\psi\right\|_{L^{\infty}}\left(\int_{\Omega_{\delta}}u_{0,j}(x)\,dx+\int_0^t\int_{\partial\Omega}f\,d\sigma ds+\sum_{i=1}^{i_0}\int_0^t\int_{\partial B_{\delta}(a_i)}g_i\,d\sigma ds\right)^m t\quad\forall t>0.
\end{aligned}
\end{equation*}
Letting $j\to\infty$,
\begin{align}\label{u-u0-L1-bd}
&\left|\int_{\Omega_{\delta}}u(x,t)\psi(x)\,dx-\int_{\Omega_{\delta}}u_0(x)\psi(x)\,dx\right|\notag\\
&\qquad \qquad \le\left|\Omega\right|^{1-m}\left\|\La\psi\right\|_{L^{\infty}}\left(\int_{\Omega_{\delta}}u_0(x)\,dx+\int_0^t\int_{\partial\Omega}f\,d\sigma ds+\sum_{i=1}^{i_0}\int_0^t\int_{\partial B_{\delta}(a_i)}g_i\,d\sigma ds\right)^m t
\quad\forall t>0.
\end{align}
Letting $t\to 0$ in \eqref{u-u0-L1-bd},
\begin{equation}\label{u-converge-u0-weakly}
\lim_{t\to 0}\int_{\Omega_{\delta}}u(x,t)\psi(x)\,dx=\int_{\Omega_{\delta}}u_0(x)\psi(x)\,dx\qquad\forall\psi\in C^{\infty}_0(\Omega).
\end{equation}
By \eqref{u-converge-u0-weakly}, any sequence $\{t_k\}_{k=1}^{\infty}$ converging to $0$ as $k\to\infty$ will have a convergent subsequence $\{t_{k_l}\}_{l=1}^{\infty}$ such that $u(x,t_{k_l})$ converges to $u_0(x)$ a.e. $x\in\Omega_{\delta}$ as $l\to\infty$. By the Lebesgue Dominated Convergence Theorem
\begin{equation*}
\lim_{l\to\infty}\int_{\Omega_{\delta}}|u(x,t_{k_l})-u_0(x)|\,dx=0
\end{equation*}
Since the sequence $\{t_k\}_{k=1}^{\infty}$ is arbitrary, $u$ satisfies \eqref{u-L1-converge-u0-as-t-goto-0}.

\noindent\textbf{Case 2:} $0\leq u_0\in L^{\infty}\left(\overline{\Omega}_{\delta}\right)$.

\noindent We choose a sequence of functions $\{u_{0,j}\}_{j=1}^{\infty}\subset C^{\infty}(\overline{\Omega}_{\delta})$ such that $b_j:=\min_{\Omega_{\delta}}u_{0,j}>0$ on $\overline{\Omega}_{\delta}$ for all $j\in\Z^+$ and
\begin{equation}\label{eq-cases-aligned-approximations-to-u-by-u-j-08}
\begin{cases}
\begin{aligned}
&\|u_{0,j}-u_0\|_{L^1(\Omega_{\delta})}\to 0 \qquad  \qquad \mbox{as $j\to\infty$}\\
&\qquad u_{0,j}(x)\to u_0(x) \qquad \qquad \mbox{a.e. $x\in\Omega$} \quad \mbox{ as }j\to\infty\\
&\|u_{0,j}\|_{L^{\infty}(\Omega_{\delta})}\leq \|u_0\|_{L^{\infty}(\Omega_{\delta})}+\frac{1}{j}\quad\forall j\in\mathbb{Z}^+.
\end{aligned}
\end{cases}
\end{equation}
For each $i=1,\cdots,i_0$, we choose a sequence of positive functions $\{g_{i,j}\}_{j=1}^{\infty}\subset C^{\infty}(\partial B_{\delta}(a_i)\times[0,\infty))$  satisfying
\begin{equation}\label{eq-cases-aligned-construction-of-g-i-j-converging-to-g-i-08}
\begin{cases}
\begin{aligned}
g_{i,j}&\to g_i \qquad \qquad\qquad\qquad\qquad\mbox{in $L^1_{loc}(\partial B_{\delta}(a_i)\times[0,\infty))$} \qquad \mbox{as $j\to\infty$}\qquad \forall i=1,\cdots, i_0\\
g_{i,j}(x,t)&\to g_{i}(x,t) \qquad\qquad\qquad\qquad\mbox{a.e. $(x,t)\in\partial B_{\delta}(a_i)\times[0,\infty)$} \quad \mbox{as $j\to\infty$}\quad\,\,\, \forall i=1,\cdots, i_0\\
g_{i,j}(x,t)&\le \|g_i\|_{L^{\infty}(\1 B_{\delta}(a_i)\times [0,T])}+1\qquad\mbox{ on }\partial B_{\delta}(a_i)\times[0,T)\qquad\qquad\qquad\forall j\in\mathbb{Z}^+,\, i=1,\cdots, i_0, T>0\\
g_{i,j}&=\alpha_{i,j}\qquad \qquad\qquad\qquad\quad \mbox{ on }\partial B_{\delta}(a_i)\times  [0,1/j)\qquad\qquad\qquad\qquad\forall j\in\mathbb{Z}^+,\, i=1,\cdots, i_0
\end{aligned}
\end{cases}
\end{equation}
for some positive constants $\alpha_{i,j}$ and choose a sequence of nonnegative functions $\{f_j\}_{j=1}^{\infty}\subset C^{\infty}\left(\partial\Omega\times [0,\infty)\right)$ satisfying
\begin{equation}\label{eq-cases-aligned-construction-of-f-i-converging-to-g-i-08}
\begin{cases}
\begin{aligned}
f_j&\to f \qquad \qquad\qquad \quad\,\mbox{ in }L^1_{loc}(\partial\Omega\times[0,\infty)) \quad \mbox{as $j\to\infty$}\\
f_j(x,t)&\to f(x,t) \qquad\qquad\quad\,\, \,\mbox{a.e. $(x,t)\in\partial\Omega\times[0,\infty)$} \quad \mbox{as $j\to\infty$}\\
f_j(x,t)&\le\|f\|_{L^{\infty}(\1\Omega\times [0,T])}+1\quad\mbox{on }\partial \Omega_{\delta}\times[0,T)\quad
\forall j\in\mathbb{Z}^+, T>0\\
f_j&= 0 \qquad\qquad \qquad\quad \,\,\,\mbox{ on }\partial\Omega\times [0,1/j)\quad \forall j\in\mathbb{Z}^+.
\end{aligned}
\end{cases}
\end{equation}
If $f\equiv 0$ on $\1\Omega\times (0,\infty)$ and $g_i$ are nonnegative monotone decreasing function of $t\in (0,\infty)$ for $i=1,\dots,i_0$, then we choose $f_j\equiv 0$ on $\1\Omega\times (0,\infty)$ for all $j\in\Z^+$ and the functions $g_{i,j}$ such that they are positive monotone decreasing functions of $t\in (0,\infty)$ for $i=1,\dots,i_0$ and $j\in\Z^+$. 
Then by case 1 for any $j\in\Z^+$ there exists a solution $u_j\in C^{2,1}(\2{\Omega}_{\delta}\times(0,\infty))$ of \eqref{Neumann-holes-eqn} in $\Omega_{\delta}\times(0,\infty)$ with initial value $u_{0,j}$ that satisfies
\begin{equation}\label{eq-integration-form-from-main-construction-equation-1-with-j-2}
\int_{\Omega_{\delta}}u_j(x,t)\,dx=\int_{\Omega_{\delta}}u_{0,j}\,dx+\iint_{\partial\Omega\times(0,t)}f_j\,d\sigma ds
+\sum_{i=1}^{i_0}\iint_{\partial B_{\delta}(a_i)\times(0,t)}g_{i,j}\,d\sigma ds \qquad \forall t>0,j\in\Z^+.
\end{equation}
Let $t_2>t_1>0$. Then by \eqref{eq-cases-aligned-approximations-to-u-by-u-j-08} and  Proposition \ref{Lp-L-infty-thm-2}, there exists a constant $C>0$ such that \eqref{uj-uniform-upper-bd} holds. Since the constant function $b_j$ is a subsolution of \eqref{Neumann-holes-eqn} in $\Omega_{\delta}\times(0,\infty)$ with $u_0=b_j$. By Lemma \ref{comparsion-lemma1},
\begin{equation}\label{uj-uniform-lower-bd}
u_j(x,t)\ge b_j>0\quad\forall x\in\Omega_{\delta}, t\ge 0. 
\end{equation}
Then by \eqref{uj-uniform-upper-bd} and \eqref{uj-uniform-lower-bd} the equation \eqref{fast-diffusion-eqn} is uniformly parabolic for the sequence $\{u_j\}_{j=1}^{\infty}$ on every compact subset of $\1{\Omega}_{\delta}\times (0,\infty)$. 
By the Schauder estimates \cite{LSU} (Theorem 3.1 and Theorem 5.4 in chapter 5 of \cite{LSU}), for any compact subset $K$ of $\Omega_{\delta}\times (0,\infty)$,
\begin{equation}\label{1st-2nd-derivatives-infty-bd}
\sup_K(|\nabla u_j|+|\1^2_{x_kx_l}u_j|+|u_{j,t}|)\le C_3\quad\forall k,l=1,\cdots,n, j\in\Z^+
\end{equation}
and
\begin{equation}\label{2nd-derivatives-holder}
\sup_{(y,s), (y',s')\in  K}\frac{|\1^2_{x_kx_l}u_j(y,s)-\1^2_{x_kx_l}u_j(y',s')|}{|y-y'|^{\alpha}}+\sup_{(y,s), (y',s')\in  K}\frac{|u_{j,t}(y,s)-u_{j,t}(y',s')|}{|s-s'|^{\frac{\alpha}{2}}}\le C_4\quad\forall k,l=1,\cdots,n, j\in\Z^+
\end{equation}
for some constants $C_3>0$, $C_4>0$, $0<\alpha<1$.

Hence by the Schauder estimates \cite{LSU} the sequence $\{u_j\}_{j=1}^{\infty}$ is equi-H\"older continuous in $C^{2,1}(K)$ for any compact subset $K$ of 
$\Omega_{\delta}\times (0,T)$. Hence by the Ascoli Theorem and a diagonalization argument the sequence $\{u_j\}_{j=1}^{\infty}$ has a subsequence which we may assume without loss of generality to be the sequence itself that converges uniformly in $C^{2,1}(K)$ for any  compact subset  $K\subset\Omega_{\delta}\times(0,\infty)$ as $j\to\infty$ to some function $u\in C^{2,1}(\Omega_{\delta}\times(0,\infty))$ that satisfies
\begin{align}\label{weak-soln-eqn}
&\int_{t_1}^{t_2}\int_{\Omega_{\delta}}(u\eta_t+u^m\Delta\eta)\,dxdt+\int_{t_1}^{t_2}\int_{\1\Omega}f\eta\,d\sigma dt
+\sum_{i=1}^{i_0}\int_{t_1}^{t_2}\int_{\1 B_{\delta}(a_i)}g_i\eta\,d\sigma dt\notag\\
&\qquad \qquad =\int_{\Omega_{\delta}}u(x,t_2)\eta (x)\,dx-\int_{\Omega_{\delta}}u(x,t_1)\eta (x)\,dx
\end{align}
for any $t_2>t_1>0$, and $\eta\in C^2(\2{\Omega}_{\delta}\times (0,\infty))$ satisfying $\1\eta/\1\nu =0$ on 
$\1\Omega_{\delta}\times (0,\infty)$. 
Letting $j\to\infty$ in \eqref{eq-integration-form-from-main-construction-equation-1-with-j-2}, by \eqref{eq-cases-aligned-approximations-to-u-by-u-j-08}, \eqref{eq-cases-aligned-construction-of-g-i-j-converging-to-g-i-08}, \eqref{eq-cases-aligned-construction-of-f-i-converging-to-g-i-08} and the Lebesgue Dominated Convergence Theorem, we get that $u$ satisfies \eqref{mass-formulua-holes}. By Lemma \ref{positivity-lemma},
\begin{equation}\label{eq-strictly-positivity-of-limit-function-u-through-whole-domain-324}
u(x,t)>0\quad\forall\, (x,t)\in\Omega_{\delta}\times(0,\infty).
\end{equation}
By an argument same as case 1, $u$ satisfies \eqref{u-L1-converge-u0-as-t-goto-0}. Hence $u$ is a solution of \eqref{Neumann-holes-eqn}.\\
\noindent\textbf{Case 3:} $0\leq u_0\in L^p(\Omega_{\delta})$.\\
\noindent For any $j\in\Z^+$, let $u_{0,j}(x)=\min\left(u_0(x),j\right)$. By case 2 there exists a solution $u_j$ of \eqref{Neumann-holes-eqn} in $\Omega_{\delta}\times(0,T_j)$ with initial value $u_{0,j}$ that satisfies \eqref{mass-formulua-holes}. Since
$\|u_{0,j}\|_{L^{p}(\Omega_{\delta})}\le\|u_0\|_{L^{p}(\Omega_{\delta})}$, by  Proposition \ref{Lp-L-infty-thm-2} for any $t_2>t_1>0$ there exists a constant $C$ such that \eqref{uj-uniform-upper-bd} holds.
Since $u_{0,j}$ increases and converges to $u_0$ as $j\to\infty$, by Lemma \ref{comparsion-lemma1},
\begin{equation}\label{eq-increasing-sequence-of-u-sub-j-on-L-1-space03}
u_j(x,t)\le u_{j+1}(x,t)\quad\forall x\in\Omega_{\delta},0<t<T_j,j\in\Z^+.
\end{equation}
Since $u_j>0$ in $\Omega_{\delta}\times (0,\infty)$, by \eqref{uj-uniform-upper-bd} and \eqref{eq-increasing-sequence-of-u-sub-j-on-L-1-space03} the equation \eqref{fast-diffusion-eqn} for the sequence $\{u_j\}_{j=1}^{\infty}$ is uniformly parabolic on $\2{\Omega}_{\delta}\times [t_1,t_2]$ for any $t_2>t_1>0$. By the Schauder estimates \cite{LSU} (Theorem 3.1 and Theorem 5.4 in chapter 5 of \cite{LSU}), for any compact subset $K$ of $\Omega_{\delta}\times (0,\infty)$,
\eqref{1st-2nd-derivatives-infty-bd} and \eqref{2nd-derivatives-holder} hold
for some constants $C_3>0$, $C_4>0$, $0<\alpha<1$. 

Hence the sequence $\{u_j\}_{j=1}^{\infty}$ is uniformly equi-H\"older continuous in $C^{2,1}(K)$ for any compact set $K\subset\Omega_{\delta}\times (0,\infty)$.  Hence by \eqref{eq-increasing-sequence-of-u-sub-j-on-L-1-space03}, the Ascoli theorem, and a diagonalization argument and  the sequence $\{u_j\}_{j=1}^{\infty}$ increases and converges in $C^{2,1}(K)$ for any compact set $K\subset\Omega_{\delta}\times (0,\infty)$ to a solution $u\in C^{2,1}(\Omega_{\delta}\times (0,\infty))$ of \eqref{fast-diffusion-eqn} in $\Omega_{\delta}\times (0,T)$. Putting $u=u_j$ and $u_0=u_{0,j}$ in \eqref{mass-formulua-holes}, \eqref{uj-uniform-upper-bd}, \eqref{weak-soln-eqn}, and  letting $j\to\infty$, by \eqref{uj-uniform-upper-bd} we get that $u$ satisfies \eqref{mass-formulua-holes},\eqref{weak-soln-eqn} and $u\in L^{\infty}(\1{\Omega}_{\delta}\times (0,\infty))$. It remains to show that $u$ has initial value $u_0$.\\
\indent Let $t_k\to 0$ as $k\to\infty$. By the same argument as in the Case 1, the sequence $\left\{u(x,t_k)\right\}_{k=1}^{\infty}$ has a subsequence which we may assume without loss of generality to the sequence itself such that $u(x,t_k)\to u_0$ weakly in $L^1(\Omega_{\delta})$ and a.e. $x\in\Omega_{\delta}$ as $k\to\infty$. By the proof of  Proposition \ref{Lp-L-infty-thm-2}, \eqref{u^p-integral-bd} holds. Hence $u(x,t_{k})$ converges weakly in $L^p(\Omega_{\delta})$ to some function $v_0$ as 
$k\to\infty$. Then there exists a subsequence of $\{t_{k}\}_{k=1}^{\infty}$ which we may assume without loss of generality to be the sequence $\{t_{k}\}_{k=1}^{\infty}$ such that $u(x,t_{k})$ converges to $v_0(x)$ a.e. $x\in\Omega_{\delta}$ as $k\to\infty$. Hence $v(x)=u_0(x)$ a.e. $x\in\Omega_{\delta}$. Thus
\begin{equation}\label{u0<u-lp-norm-at-t=0}
\int_{\Omega_{\delta}}u_0^p\,dx\le\liminf_{k\to\infty}\int_{\Omega_{\delta}}u^p(x,t_{k})\,dx.
\end{equation}
Letting first $t\to 0$ and then $a\to 0$ in \eqref{u^p-integral-bd},
\begin{equation}\label{u0>u-lp-norm-at-t=0}
\limsup_{t\to 0}\int_{\Omega_{\delta}}u^p(x,t)\,dx\le\int_{\Omega_{\delta}}u_0^p\,dx.
\end{equation}
By \eqref{u0<u-lp-norm-at-t=0} and \eqref{u0>u-lp-norm-at-t=0},
\begin{equation}\label{u=u0-lp-norm-at-t=0}
\lim_{l\to 0}\int_{\Omega_{\delta}}u^p(x,t_{k})\,dx=\int_{\Omega_{\delta}}u_0^p\,dx.
\end{equation}
Now consider the function
\begin{equation*}
w_k(x)=2^p(u^p(x,t_{k})+u_0^p(x))-|u(x,t_{k})-u_0(x)|^p.
\end{equation*}
Note that $w_k(x)\ge 0$ on $\Omega_{\delta}$ and $w_k(x)\to 2^{p+1}u_0^p(x)$ a.e. $x\in\Omega_{\delta}$ as $k\to\infty$. Hence by the Fatou Lemma and
\eqref{u=u0-lp-norm-at-t=0},
\begin{align*}
2^{p+1}\int_{\Omega_{\delta}}u_0^p\,dx\le&\liminf_{k\to\infty}\int_{\Omega_{\delta}}2^p(u^p(x,t_{k})+u_0^p(x))-|u(x,t_{k})-u_0(x)|^p\,dx\\
=&2^{p+1}\int_{\Omega_{\delta}}u_0^p\,dx-\limsup_{k\to\infty}\int_{\Omega_{\delta}}|u(x,t_{k})-u_0(x)|^p\,dx\\
\Rightarrow\qquad\lim_{l\to\infty}\int_{\Omega_{\delta}}|u(x,t_{k})-&u_0(x)|^p\,dx=0.
\end{align*}
Since the sequence $\left\{t_k\right\}_{k=1}^{\infty}$ is arbitrary, $u$ satisfies \eqref{u-L1-converge-u0-as-t-goto-0}. Hence, $u$ is a  solution of \eqref{Neumann-holes-eqn} in $\Omega_{\delta}\times (0,\infty)$.\\ 
\indent If $f\equiv 0$ on $\1\Omega\times (0,\infty)$ and $g_i$, $i=1,\dots, i_0$, are positive monotone decreasing functions of 
$t>0$, then by the choice of the approximating functions for $f$ and $g_i$ in the construction of solutions of cases 1,2,3 above and Lemma \ref{positivity-lemma} $u$ satisfies \eqref{Aronson-Benilan-ineqn} in 
$\Omega_{\delta}\times(0,\infty)$ and the theorem follows.
\end{proof}

By the same argument as the proof of Theorem \ref{Neumann-holes-problem-existence-thm} and Lemma \ref{comparsion-lemma1} we have the following two results.

\begin{thm}\label{Neumann-problem-bded-domain-thm}
Let $n\ge 3$, $0<m\le\frac{n-2}{n}$, $0\le u_0\in L^p(\Omega)$ for some constant $p>\frac{n(1-m)}{2}$, and $0\le f\in L_{loc}^{\infty}(\partial\Omega\times [0,\infty))$. Suppose either $u_0\not\equiv 0$ on $\Omega_{\delta}$ or 
\begin{equation*}
\int_0^t\int_{\partial\Omega}f\,d\sigma ds>0\quad \forall t>0
\end{equation*}
holds. Then there exists a unique solution $u$ for the Neumann problem,
\begin{equation*}
\begin{cases}
\begin{aligned}
u_t=&\La u^m \quad\mbox{in $\Omega\times(0,\infty)$}\\
\frac{\partial u^m}{\partial\nu}=&f \qquad \mbox{ on }\partial\Omega\times(0,\infty)\\
u(x,0)=&u_0(x)\quad\mbox{in }\Omega
\end{aligned}
\end{cases}
\end{equation*}
which satisfies
\begin{equation*}
\int_{\Omega}u(x,t)\,dx=\int_{\Omega}u_0\,dx+\int_0^t\int_{\partial\Omega}f\,d\sigma ds\quad \forall t>0.
\end{equation*}
Moreover if $f\equiv 0$ on $\1\Omega\times (0,\infty)$, then $u$ satisfies \eqref{Aronson-Benilan-ineqn} in 
$\Omega_{\delta}\times(0,T)$.
\end{thm}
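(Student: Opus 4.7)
The plan is to mirror the three-stage construction used for Theorem \ref{Neumann-holes-problem-existence-thm}, with the inner pieces $\bigcup_{i} \partial B_{\delta}(a_i)$ and their data $g_i$ absent throughout. Uniqueness is immediate from Lemma \ref{comparsion-lemma1} applied with $\Omega_1 = \Omega$, $\Sigma_1 = \partial\Omega$, and $\Sigma_2 = \emptyset$, so the task reduces to constructing a solution.

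First I would handle the regularized case $u_0 \in C^\infty(\overline\Omega)$ with $\inf_\Omega u_0 \ge \varepsilon_0 > 0$ and $f \in C^\infty(\partial\Omega \times [0,\infty))$ vanishing on $\partial\Omega \times [0,\delta')$ for some $\delta' > 0$. Exactly as in Case 1 of Theorem \ref{Neumann-holes-problem-existence-thm}, introduce a smooth monotone truncation $H$ of $m s^{1-1/m}$ and solve the uniformly parabolic Neumann problem $v_t = H(v)\Delta v$ with initial datum $u_0^m$ using the standard theory of \cite{LSU}; the maximum principle keeps $v$ inside the range where $H(v) = m v^{1-1/m}$, so $u = v^{1/m}$ is a classical $C^{2,1}(\overline\Omega \times [0,\infty))$ solution of \eqref{fast-diffusion-eqn} with the required Neumann data, and integrating the equation in $x$ yields the mass identity.

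Next, for $u_0 \in L^\infty(\Omega)$ and the given $L^\infty_{loc}$ datum $f$, approximate $u_0$ by positive smooth $u_{0,j}$ with $u_{0,j} \ge b_j > 0$ and $f$ by smooth $f_j$ vanishing on $\partial\Omega \times [0,1/j)$ (chosen nonnegative monotone decreasing in $t$ when $f \equiv 0$, in order to feed Lemma \ref{Aronson-Benilan-ineqn-lemma} later). Case 1 then produces smooth positive approximations $u_j$; Proposition \ref{Lp-L-infty-thm-2} supplies a uniform upper bound on $\overline\Omega \times [t_1,t_2]$, while Lemma \ref{comparsion-lemma1} against the constant subsolution $b_j$ gives a positive lower bound, making \eqref{fast-diffusion-eqn} uniformly parabolic on compact subsets of $\Omega \times (0,\infty)$. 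Schauder estimates and a diagonal argument then extract a $C^{2,1}_{loc}$ limit $u$; Lemma \ref{positivity-lemma} keeps it strictly positive, dominated convergence passes the mass identity to the limit, and the $\Delta \psi$-duality computation of Case 1 of Theorem \ref{Neumann-holes-problem-existence-thm} recovers the $L^1$ initial trace.

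Finally, for $0 \le u_0 \in L^p(\Omega)$ with $p > n(1-m)/2$, truncate $u_{0,j} = \min(u_0,j)$ and feed it into the $L^\infty$ case. Lemma \ref{comparsion-lemma1} forces monotonicity $u_j \le u_{j+1}$, Proposition \ref{Lp-L-infty-thm-2} gives the uniform local upper bound, and these together with Schauder estimates upgrade the monotone convergence to $C^{2,1}_{loc}$ convergence towards the desired solution $u$. The Aronson--B\'enilan inequality \eqref{Aronson-Benilan-ineqn} in the case $f \equiv 0$ then follows by applying Lemma \ref{Aronson-Benilan-ineqn-lemma} (with vacuous $g_i$ hypothesis) to each $u_j$ and passing to the limit. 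I expect the one delicate step to be recovering the initial trace in $L^1$ when only $u_0 \in L^p$: I would extract a subsequence $t_k \to 0$ along which $u(\cdot,t_k) \to u_0$ almost everywhere and weakly in $L^p$, use the energy inequality \eqref{u^p-integral-bd} to force $\int_\Omega u^p(x,t_k)\,dx \to \int_\Omega u_0^p\,dx$, and deduce strong $L^p$ (hence $L^1$) convergence by applying Fatou's lemma to the nonnegative function $w_k = 2^p(u^p(\cdot,t_k) + u_0^p) - |u(\cdot,t_k) - u_0|^p$, exactly as in Case 3 of Theorem \ref{Neumann-holes-problem-existence-thm}.
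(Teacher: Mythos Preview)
Your proposal is correct and follows essentially the same approach as the paper: the paper simply states that Theorem \ref{Neumann-problem-bded-domain-thm} follows ``by the same argument as the proof of Theorem \ref{Neumann-holes-problem-existence-thm} and Lemma \ref{comparsion-lemma1}'', and your three-case outline (smooth strictly positive data, then $L^\infty$ data, then $L^p$ data via monotone truncation) reproduces exactly that argument with the inner boundary pieces deleted. Your handling of the initial trace in Case~3 via the Fatou argument on $w_k$ and of the Aronson--B\'enilan inequality via Lemma \ref{Aronson-Benilan-ineqn-lemma} also matches the paper.
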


\begin{cor}\label{existence-radially-symmetric-soln-Neumann-with-1-hole-cor}
Let $n\ge 3$, $0<\delta<R$, $0<m\leq\frac{n-2}{n}$, $p>\frac{n(1-m)}{2}$ and let $f,g\in L_{loc}^{\infty}([0,\infty))$ be two nonnegative functions. Suppose $0\le u_0\in L^p(B_R\bs B_{\delta})$ is a radially symmetric function such that either $u_0\not\equiv 0$ on $\Omega_{\delta}$ or 
\begin{equation*}
\int_0^t\int_{\partial B_R}f\,d\sigma ds+\int_0^t\int_{\partial B_{\delta}}g\,d\sigma ds>0\quad \forall t>0
\end{equation*}
holds. Then there exists a unique solution of 
\begin{equation*}
\left\{\begin{aligned}
u_t=&\La u^m\quad\mbox{in }(B_R\bs B_{\delta})\times(0,\infty)\\
\frac{\partial u^m}{\partial\nu}=&f\qquad\mbox{ on }\partial B_R\times(0,\infty)\\
\frac{\partial u^m}{\partial\nu}=&g\qquad\mbox{ on }\partial B_{\delta}\times(0,\infty)\\
u(x,0)=&u_0(x)\quad\mbox{in }B_R
\end{aligned}\right.
\end{equation*}
in $(B_R\bs B_{\delta})\times(0,\infty)$ which is radially symmetric and satisfies \eqref{mass-formulua-holes} with $\Omega=B_{R}$, $i_0=1$ and $a_1=0$.
\end{cor}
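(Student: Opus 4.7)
The plan is to obtain the corollary as a direct specialization of Theorem \ref{Neumann-holes-problem-existence-thm} combined with a standard rotation/uniqueness argument for radial symmetry. Take $\Omega=B_R$, $i_0=1$, $a_1=0$, so that $\Omega_\delta=B_R\setminus B_\delta$ and $\delta_0=R$ in the setup of Theorem \ref{Neumann-holes-problem-existence-thm}. The given functions $f,g\in L^\infty_{loc}([0,\infty))$, viewed as functions on $\partial B_R\times[0,\infty)$ and $\partial B_\delta\times[0,\infty)$ respectively via $f(x,t)=f(t)$ and $g(x,t)=g(t)$, fit the hypotheses (nonnegative, locally bounded). The initial datum $0\le u_0\in L^p(B_R\setminus B_\delta)$ with $p>n(1-m)/2$ is exactly as required, and the nontriviality assumption of the corollary matches the corresponding alternative in Theorem \ref{Neumann-holes-problem-existence-thm}. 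Hence Theorem \ref{Neumann-holes-problem-existence-thm} supplies a unique solution $u$ in $(B_R\setminus B_\delta)\times(0,\infty)$ satisfying \eqref{mass-formulua-holes}, and uniqueness also follows from Lemma \ref{comparsion-lemma1}.

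It remains to show $u(\cdot,t)$ is radially symmetric for every $t>0$. For any orthogonal matrix $O\in SO(n)$, define
\begin{equation*}
\tilde u(x,t)=u(Ox,t),\qquad (x,t)\in (B_R\setminus B_\delta)\times(0,\infty).
\end{equation*}
Since the Laplacian is rotation invariant, $\tilde u$ solves $\tilde u_t=\Delta\tilde u^m$ classically in $(B_R\setminus B_\delta)\times(0,\infty)$. Because $O$ maps $\partial B_R$ to itself and $\partial B_\delta$ to itself with the outward unit normal at $x$ mapped to the outward unit normal at $Ox$, and because $f$ and $g$ depend only on $t$, one checks that
\begin{equation*}
\frac{\partial\tilde u^m}{\partial\nu}(x,t)=\frac{\partial u^m}{\partial\nu}(Ox,t)=f(t)\,\,\text{on }\partial B_R\times(0,\infty),\qquad\frac{\partial\tilde u^m}{\partial\nu}(x,t)=g(t)\,\,\text{on }\partial B_\delta\times(0,\infty).
\end{equation*}
Since $u_0$ is radially symmetric, $\tilde u(x,0)=u_0(Ox)=u_0(x)$, and $\tilde u$ satisfies \eqref{u-L1-converge-u0-as-t-goto-0} because the change of variables $x\mapsto Ox$ is measure-preserving on $B_R\setminus B_\delta$. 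Thus $\tilde u$ is a solution of the same Neumann problem with the same initial data as $u$.

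By the uniqueness part of Theorem \ref{Neumann-holes-problem-existence-thm} (equivalently Lemma \ref{comparsion-lemma1} applied in both directions), $\tilde u\equiv u$, i.e.\ $u(Ox,t)=u(x,t)$ for every $O\in SO(n)$, which means $u$ is radially symmetric in $x$ for each $t>0$. The mass identity \eqref{mass-formulua-holes} with $\Omega=B_R$, $i_0=1$, $a_1=0$ is inherited directly from Theorem \ref{Neumann-holes-problem-existence-thm}. There is no substantive obstacle beyond verifying that rotation preserves every piece of the data; the only point that requires a moment's care is the $L^1$ initial-trace condition \eqref{u-L1-converge-u0-as-t-goto-0} for $\tilde u$, which follows from the rotation invariance of Lebesgue measure.
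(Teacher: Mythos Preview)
Your argument is correct and matches the paper's own approach: the paper simply states that the corollary follows ``by the same argument as the proof of Theorem \ref{Neumann-holes-problem-existence-thm} and Lemma \ref{comparsion-lemma1}'', and you have spelled this out, supplying in addition the standard rotation-plus-uniqueness argument for radial symmetry. One small remark: with the introduction's definition $\delta_0=\tfrac12\,\mathrm{dist}(a_1,\partial\Omega)=R/2$, a literal application of Theorem \ref{Neumann-holes-problem-existence-thm} would only cover $0<\delta<R/2$; the full range $0<\delta<R$ requires invoking the \emph{proof} of that theorem (where the factor $1/2$ plays no role for a single puncture), which is presumably why the paper phrases it that way.
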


\section{Existence of singular solutions}
\setcounter{equation}{0}
\setcounter{thm}{0}

In this section we will prove the existence of singular solutions of \eqref{fast-diffusion-eqn} in $\hat{\Omega}\times (0,T)$.

\begin{proof}[Proof of Theorem \ref{singular-soln-existence-thm}]

\noindent Let $\alpha=2m(q+4\delta_1^{-2})$ and let 
\begin{equation*}
0<\3_j<\min\left(\frac{\delta_1}{2},\frac{(1-m)^2q^2}{(4+(1-m)q)^2},\frac{(1-m)q\delta_0}{4+(1-m)q}\right)\quad\forall j\in\Z^+.
\end{equation*}
be a sequence  decreasing to zero as $j\to\infty$. By Theorem \ref{Neumann-holes-problem-existence-thm} for any $j\in\Z^+$ there 
exists a solution $u_j$ of 
\begin{equation}\label{Neumann-problem-in-punctured-ball}
\left\{\begin{aligned}
u_t=&\Delta u^m\qquad\mbox{in }\Omega_{\delta}\times(0,\infty)\\
\frac{\partial u^m}{\partial\nu}=&f\qquad\quad\mbox{ on }\partial\Omega\times(0,\infty)\\
\frac{\partial u^m}{\partial\nu}=&\frac{\alpha}{\3_j^{qm+1}}\quad\,\,\mbox{ on }\partial B_{\3_j}(a_i)\times(0,\infty)\quad\forall
i=1,\cdots, i_0\\
u(x,0)=&u_0(x)\qquad\mbox{in }B_R
\end{aligned}\right.
\end{equation}
By Lemma \ref{soln-bded-holes-lemma} there exists a constant $0<\delta_2<\delta_1$ such that for any $T>0$ there exists a constant $A_1>0$ such that 
\begin{equation}\label{uj-upper-bd-phi-tidle}
u_j(x,t)\le\phi_{A_1}(x-a_i,t) \qquad \forall\3_j\le |x-a_i|<\delta_1,\,\,0\leq t<T, i=1,\cdots,i_0,\3_j<\delta_2
\end{equation}
where $\phi_{A_1}$ is given by \eqref{phi-A-defn}. By Lemma \ref{comparison-lem2},
\begin{equation}\label{uj-lower-bd-at-blow-up-pt}
u_j(x,t)\ge\frac{C_1}{|x-a_i|^qe^{\frac{1}{\delta_1^2-|x-a_i|^2}}}\quad\forall \3_j\le |x-a_i|<\delta_1,t>0, \3_j\le\frac{\delta_1}{2}, i=1,\cdots,i_0
\end{equation}
holds for any $0<\delta\le\delta_1/2$. By Proposition \ref{Lp-L-infty-thm-3} for any $t_2>t_1>0$ and $\delta'<\delta_1$ there exists a constant $M_{\delta', t_1,t_2}>0$ such that
\begin{equation}\label{uj-upper-bd5}
u_j(x,t)\le M_{\delta', t_1,t_2}\quad\forall x\in\Omega_{\delta'}, t_1\le t\le t_2, \3_j<\frac{\delta'}{2}.
\end{equation}
By \eqref{uj-upper-bd5} and  Theorem 1.1 of \cite{S} the sequence $\{u_j\}_{\3_j<\frac{\delta'}{2}}$ is equi-H\"older
continuous on $\Omega_{\delta'}\times [t_1,t_2]$ for any $0<\delta'<\delta_1$ and $t_2>t_1>0$. By the Ascoli Theorem and a diagonalization argument the sequence $\{u_{j}\}_{j=1}^{\infty}$ has a subsequence which we may assume without loss of generality to be the sequence itself that converges uniformly on every compact subset of $\hat{\Omega}\times(0,\infty)$ 
to some continuous function $u$ that satisfies \eqref{fast-diffusion-eqn} in $\mathcal{D}(\hat{\Omega}\times (0,\infty))$ as $j\to\infty$. 
Letting $j\to\infty$ in \eqref{uj-lower-bd-at-blow-up-pt}, \eqref{uj-upper-bd-phi-tidle},  and \eqref{uj-upper-bd5},
we get \eqref{singular-soln-upper-lower-bd},
\begin{equation}\label{u-upper-bd-phi-tidle2}
u(x,t)\le\phi_{A_1}(x-a_i,t) \qquad \forall 0< |x-a_i|<\delta_1,\,\,0\leq t<T, i=1,\cdots,i_0
\end{equation}
and
\begin{equation}\label{u-upper-bd5}
u(x,t)\le M_{\delta', t_1,t_2}\quad\forall x\in\Omega_{\delta'}, t_1\le t\le t_2.
\end{equation}
By \eqref{u-upper-bd-phi-tidle2}, \eqref{singular-soln-upper-lower-bd2} follows.
By \eqref{singular-soln-upper-lower-bd} and Lemma \ref{positivity-lemma}, 
\begin{equation*}
u>0\quad\mbox{ in }\hat{\Omega}\times (0,\infty).
\end{equation*}
Hence for any $t_2>t_1>0$ and $0<\delta'<\delta_0$, there exists a constant $M'>0$ such that
\begin{equation}\label{u-lower-bd5}
u\ge M'\quad\mbox{ in }\Omega_{\delta'}\times (t_1,t_2).
\end{equation}
By \eqref{u-upper-bd5} and \eqref{u-lower-bd5} the equation \eqref{fast-diffusion-eqn} for $u$ is uniformly parabolic on
$\Omega_{\delta'}\times (t_1,t_2)$ for any $t_2>t_1>0$ and $0<\delta'<\delta_0$. 
Hence by the Schauder estimates \cite{LSU} (Theorem 3.1 and Theorem 5.4 in chapter 5 of \cite{LSU}), $u\in C^{2+\beta,1+(\beta/2)}(\hat{\Omega}\times (0,\infty))$ for some constant $0<\beta<1$ is a classical solution of \eqref{fast-diffusion-eqn} in $\hat{\Omega}\times (0,\infty)$. Putting $u=u_j$ in \eqref{weak-singular-soln-defn} and letting $j\to\infty$, we get that $u$ satisfies \eqref{weak-singular-soln-defn} for any $0<t_1<t_2<T$ and $\eta\in C_0^2((\overline{\Omega}\setminus\{a_1,\dots,a_{i_0}\})\times(0,T))$ satisfying $\1\eta/\1\nu =0$ on $\partial\Omega\times (0,T)$. 

Let $\eta\in C_0^{\infty}(\hat{\Omega})$ and $K=\mbox{supp}\,\eta$. Then $\3=\mbox{dist}(K,\1\Omega\cup\{0\})>0$. Let $\psi\in C_0^{\infty}(\hat{\Omega})$, $0\le\psi\le 1$, be such that $\psi (x)=1$ for all $x\in K$ and $\psi (x)=0$ for all $\mbox{dist}(x,K)\ge\3/2$. 
Let $K_1=\{x\in\Omega:\mbox{dist}(x,K)\le\3/2\}$. By the proof of Lemma 3.1 of \cite{HP} there exist constants $\alpha>1$ and $C>0$ such that
\begin{align}\label{uj-L1-uniform-bd}
&\left|\frac{\1}{\1 t}\left(\int_{\Omega_{\3_j}}u_j\psi^{\alpha}\,dx\right)\right|\le C\left(\int_{\Omega_{\3_j}}u_j\psi^{\alpha}\,dx\right)^m\quad\forall t>0,\3_j<\3/2\notag\\ 
\Rightarrow\quad&\left(\int_{\Omega_{\3_j}}u_j(x,t)\psi^{\alpha}(x)\,dx\right)^{1-m}\le\left(\int_{\Omega_{\3_j}}u_0\psi^{\alpha}\,dx\right)^{1-m}+(1-m)Ct\qquad \forall t>0,\3_j<\3/2\notag\\
\Rightarrow\quad&\int_Ku_j(x,t)\,dx\le c_{K_1}\qquad \forall 0<t<1,\3_j<\3/2.
\end{align}
where
$$
c_{K_1}=\left(\left(\int_{K_1}u_0\,dx\right)^{1-m}+(1-m)C\right)^{\frac{1}{1-m}}
$$
Hence by \eqref{uj-L1-uniform-bd},
\begin{align}\label{uj-u0-L1-difference}
\left|\int_{\Omega_{\3_j}}u_j\eta\,dx-\int_{\Omega_{\3_j}}u_0\eta\,dx\right|\le&\int_0^t\int_{\Omega_{\3_j}}u_j^m|\Delta\eta|\,dx\,dt\notag\\
\le&\|\Delta\eta\|_{L^{\infty}}|K|^{1-m}t\left(\int_Ku_j(x,t)\,dx\right)^m\notag\\
\le&\|\Delta\eta\|_{L^{\infty}}|K|^{1-m}c_{K_1}^mt\quad\forall 0<t<1,\3_j<\3/2.
\end{align}
Letting $j\to\infty$ in \eqref{uj-u0-L1-difference},
\begin{equation}\label{u-u0-L1-difference}
\left|\int_{\hat{\Omega}}u\eta\,dx-\int_{\hat{\Omega}}u_0\eta\,dx\right|\le\|\Delta\eta\|_{L^{\infty}}|K|^{1-m}c_{K_1}^mt\quad\forall 0<t<1.
\end{equation}
Letting $t\to 0$ in \eqref{u-u0-L1-difference} we get \eqref{u-L1-converge-u0-as-t-goto-0-Omega-hat} and Theorem \ref{singular-soln-existence-thm} follows.
\end{proof}

We are now ready for the proof of Theorem \ref{singular-soln-existence-thm2}. 

\begin{proof}[Proof of Theorem \ref{singular-soln-existence-thm2}]
Let $R_0>3\max_{1\le i\le i_0}|a_i|$. Then for any integer $j>R_0$ by Theorem \ref{singular-soln-existence-thm}  there exists a solution $u_j$ of 
\begin{equation*}
\left\{\begin{aligned}
u_t=&\Delta u^m\quad\mbox{ in }\hat{B}_j\times(0,\infty)\\
\frac{\partial u^m}{\partial\nu}=&0\qquad\, \mbox{ on }\partial B_j\times(0,\infty)\\
u(x,0)=&u_0(x)\quad\mbox{ in }\hat{B}_j
\end{aligned}\right.
\end{equation*}
which satisfies 
\begin{equation}\label{uj-uniform-lower-blow-up-rate}
u_j(x,t)\ge\frac{C_1}{|x-a_i|^qe^{\frac{1}{\delta_1^2-|x-a_i|^2}}}\quad\forall 0<|x-a_i|<\delta_1,t>0, i=1,\cdots,i_0
\end{equation}
and for any $T>0$ there exists a constant $A_1>0$ such that 
\begin{equation}\label{uj-uniform-upper-blow-up-rate}
u_j(x,t)\le\phi_{A_1}(x-a_i,t) \qquad \forall 0< |x-a_i|<\delta_1,\,\,0\leq t<T, i=1,\cdots,i_0,
\end{equation}
where $\phi_{A_1}$ is given by \eqref{phi-A-defn}. By \eqref{uj-uniform-lower-blow-up-rate}, \eqref{uj-uniform-upper-blow-up-rate} and the same argument as the proof of Theorem \ref{singular-soln-existence-thm}
the sequence $\{u_j\}_{j>R_0}$ has a subsequence that converges to a $C^{2,1}$ solution $u$ of \eqref{singular-Neumann-problem2} that
satisfies \eqref{singular-soln-upper-lower-bd} and \eqref{singular-soln-upper-lower-bd2} for some constant $C_T>0$. This completes the proof of Theorem \ref{singular-soln-existence-thm2}.

\end{proof}

\end{document}